\definecolor{shadecolor}{gray}{0.9}
\theoremstyle{plain}  
\newtheorem{thm}{Theorem}[section] 
\newtheorem{lem}[thm]{Lemma} 
\newtheorem{prop}[thm]{Proposition} 
\newtheorem{cor}[thm]{Corollary} 
\theoremstyle{definition} 
\newtheorem{defn}[thm]{Definition}
\newtheorem{rem}[thm]{Remark}
\newtheoremstyle{assumption}
{3pt}
{3pt}
{}
{}
{\bf}
{.}
{.5em}
{\thmname{#1} (\thmnote{#3}\thmnumber{#2})}
\theoremstyle{assumption}
\newtheorem{ass}{Assumption}
\theoremstyle{remark} 
\newcommand{\diff}{\mathrm{d}}
\newcommand{\dint}{\,\mathrm{d}}
\newcommand{\E}{\mathbf{E}}
\newcommand{\ES}{ES}
\newcommand{\eps}{\varepsilon}
\newcommand{\F}{\mathcal{F}}
\newcommand{\R}{\mathbb{R}}
\newcommand{\A}{\mathsf{A}}
\renewcommand{\O}{\mathsf{O}}
\newcommand{\one}{\mathds{1}}
\newcommand{\interior}{\operatorname{int}}
\newcommand{\VaR}{\operatorname{VaR}}
\newcommand{\EVaR}{\operatorname{EVaR}}
\DeclareMathOperator*{\argmin}{arg\,min}
\newcommand{\N}{\mathbb N}
\renewcommand{\P}{\mathcal P}
\renewcommand{\L}{\mathcal L}
\renewcommand{\a}{\alpha}
\newcommand{\M}{\mathcal M}
\renewcommand{\r}{\rho}
\newcommand{\Y}{\mathcal Y}
\renewcommand{\rm}{\normalfont \rmfamily}
\renewcommand{\bf}{\normalfont \bfseries}
\def\be{\begin{equation} \label}
\def\ee{\end{equation}}
\numberwithin{equation}{section} 
\newcommand{\Comments}{1}
\newcommand{\mynote}[2]{\ifnum\Comments=1\textcolor{#1}{#2}\fi}
\newcommand{\mytodo}[2]{\ifnum\Comments=1%
  \todo[linecolor=#1!80!black,backgroundcolor=#1,bordercolor=#1!80!black]{#2}\fi}
\begin{document}

\title{Elicitability and Identifiability of Systemic Risk Measures}
\author{Tobias Fissler\thanks{Vienna University of Economics and Business, Institute for Statistics and Mathematics, Welthandels\-platz 1, 1020 Vienna, Austria, 
\texttt{tobias.fissler@wu.ac.at},
\texttt{jana.hlavinova@wu.ac.at} and \texttt{birgit.rudloff@wu.ac.at}}
\and Jana Hlavinov\'{a}\footnotemark[1]
\and Birgit Rudloff\footnotemark[1]
}
\maketitle

\begin{abstract}
\textbf{Abstract.}
Identification and scoring functions are statistical tools to assess the calibration and the relative performance of risk measure estimates, e.g., in backtesting. A risk measures is called identifiable (elicitable) it it admits a strict identification function (strictly consistent scoring function). We consider measures of systemic risk introduced in \citet{FeinsteinRudloffWeber2017}. Since these are set-valued, we work within the theoretical framework of \cite{FisslerHlavinovaRudloff_theory} for forecast evaluation of set-valued functionals. We construct oriented selective identification functions, which induce a mixture representation of (strictly) consistent scoring functions. Their applicability is demonstrated with a comprehensive simulation study.
\end{abstract}
\noindent
\textit{Keywords:}
Consistent scoring functions;
Diebold-Mariano tests;
Forecast evaluation;
$M$-estimation;
Murphy diagrams

\noindent
\textit{MSC 2010 Subject Classification: } 62F07; 62F10; 91G70

\section{Introduction}

\subsection{Systemic risk measures}

In the financial mathematics literature, there is a great interest in various types of risk and, in particular, its quantitative measurement. The quantitative assessment of risk connected to a particular financial position dates back to \citet{ArtznerDelbaenETAL1999} and has since then been discussed, from several points of view, in many further works, see e.g.\ \citep{FoellmerSchied2002,ArtznerDelbaenKochMedina2009,FoellmerWeber2015}. For a thorough overview of risk measures we refer the reader to the textbook \cite{FoellmerSchied2004}. 

The financial crisis of 2007\,--\,2009 and its aftermaths in the last decade have starkly underpinned the need to quantitatively assess the risk of an entire financial system rather than merely its individual entities. One of the first academic works on systemic risk is the seminal paper by \citet{EisenbergNoe}. The focus of this work, however, lies on modelling the financial system rather than measuring its systemic risk. Since then, financial mathematicians have developed a rich strand of literature, encompassing different approaches and emphasising various aspects of systemic risk. The model of \cite{EisenbergNoe} has been generalised in different ways, for instance by considering illiquidity \citep{RogersVeraart2013} or central clearing \citep{Amini2015}. One strand of literature defines systemic risk measures by applying a scalar risk measure to the distribution of the total profits and losses of all firms in the system \citep{AcharyaPedersenETAL2016, AdrianBrunnermeier2016}. Recognising the drawbacks of treating the economy as a portfolio, \citet{ChenIyengarMoallemi2013} introduce an axiomatic approach to measuring systemic risk, further extended by \cite{KromerOverbeckZilch2016} and \cite{HoffmannMeyerBrandisSvindland2016}. The axiomatic approach of \cite{ChenIyengarMoallemi2013} is widely used and amounts to systemic risk measures of the form $\r(\Lambda(Y))$, where $Y$ is a $d$-dimensional random vector representing the financial system, $\r$ is a scalar risk measure and $\Lambda\colon\R^d\to\R$ a non-decreasing aggregation function.
However, this approach of aggregating first and then adding a total capital requirement of the system has the drawback that it results in the measurement of bailout costs rather than capital requirements that prevent a financial crisis. These types of risk measures are also called insensitive as they do not take into account the impact of capital regulation on the system. 
 
As an alternative, so called sensitive systemic risk measures have been introduced by \citet{FeinsteinRudloffWeber2017}; see also \cite{BiaginiETAL2019} and \cite{ArmentiETAL2018} for related approaches. Here, one first adds the capital requirements to the $d$ financial institutions and then applies an aggregation function. That is, one considers systemic risk measures of the form \be{eq:R intro}
R(Y) = \{k\in\R^d\,|\,\r(\Lambda(Y+k))\le0\}.
\ee
Thus, the impact of regulation on the system is taken into account. 

In this paper, we will mainly focus on this type of systemic risk measures as introduced in \cite{FeinsteinRudloffWeber2017}; see Section \ref{sec:measures of systemic risk} for more details. $R(Y)$ specifies the \emph{set} of all capital allocations $k\in\R^d$ such that the new system $Y+k$ is deemed acceptable with respect to $\r$ after being aggregated via $\Lambda$. As such, $R$ takes an \emph{ex ante} perspective prescribing the injections to (and withdrawals from) each financial firm adequate to prevent the system $Y$ from a crisis, whereas $\r(\Lambda(Y))$, as described above, can be interpreted as the bailout costs of the system after a systemic event has occurred.

\subsection{Elicitability and identifiability}
\label{intro:Elicitability and identifiability}

The field of quantitative risk management has seen a lively debate about which scalar risk measure is most appropriate in practice; see \cite{EmbrechtsETAL2014} and \cite{EmmerKratzTasche2015} for detailed academic discussions and \cite{BIS2014} for a regulatory perspective in banking. Besides differences in axiomatic properties such as \emph{coherence} \citep{ArtznerDelbaenETAL1999} and \emph{convexity} \citep{FoellmerSchied2002} of risk measures, 
the debate has also considered more statistical aspects of risk measures. The two most widely discussed statistical desiderata are \emph{robustness} in the sense of \citet{Hampel1971}---cf.\ \cite{ContDeguestETAL2010, KratschmeSchiedETAL2014}---and \emph{elicitability}.

The term elicitability is due to \citet{Osband1985} and \citet{LambertETAL2008}. Using the terminology of mathematical statistics, a real-valued law-invariant risk measure $\r$ is elicitable if it admits an $M$-estimator \citep{HuberRonchetti2009}. That means, there is a loss or scoring function $S\colon\R\times\R\to\R$ such that 
\be{eq:consistent}
\int S(\rho(F), y) \dint F(y) < \int S(x, y) \dint F(y)
\ee
for all $F$ in some class of distribution functions $\M$ and for all $x\neq \r(F)$. Any scoring function $S$ satisfying \eqref{eq:consistent} is called \emph{strictly $\M$-consistent} for $\r\colon\M\to\R$. Besides $M$-estimation, possibly in a regression framework, such as quantile regression \citep{KoenkerBasset1978, Koenker2005} or expectile regression \citep{NeweyPowell1987}, using strictly consistent scoring functions encourages truthful forecasting. This incentive compatibility opens the way to meaningful forecast comparison \citep{Gneiting2011} which is closely related to comparative backtests in finance \citep{FisslerETAL2016, NoldeZiegel2017}.
\citet{Ziegel2016} showed that expectiles are basically the only elicitable and coherent risk measures. In line with this, the prominent risk measure Value at Risk at level $\alpha\in(0,1)$ ($\VaR_\alpha$), which corresponds to a quantile under mild conditions, turns out to be elicitable but not coherent. On the other hand, Expected Shortfall at level $\alpha\in(0,1)$ ($\ES_\alpha$), a tail expectation, is coherent, but fails to be elicitable.
Interestingly, \citet{FisslerZiegel2016} and \citet{AcerbiSzekely2014} showed that the pair $(\VaR_{\alpha}, \ES_{\alpha})$ is elicitable despite ES's failure to have a strictly consistent scoring function on its own. A similar result has recently been established providing the elicitability of the triplet consisting of the risk measure Range Value at Risk along with VaR at two different levels \citep{FisslerZiegel2019_RVaR}.

Closely related to the notion of elicitability is the concept of \emph{identifiability}. While the former is useful for forecast comparison or model selection, the latter aims at model and forecast \emph{validation} or checks for calibration. Again invoking the language of mathematical statistics, a law-invariant real-valued risk measure $\r$ is identifiable if it is a $Z$-functional. That means if it admits a moment function or strict $\M$-identification function $V\colon\R\times\R\to\R$ such that 
\be{eq:id}
\int V(x,y) \dint F(y) = 0 \ \Longleftrightarrow \ x = \rho(F)
\ee
for all $F\in\M$ and for all $x\in\R$. 
\citet{SteinwartPasinETAL2014} showed that, under appropriate regularity conditions, the identifiability of a real-valued risk measure is equivalent to its elicitability. Coherently, $\VaR_\a$ is identifiable under mild regularity conditions, using a simple coverage check, whereas $\ES_\a$ fails to have a strict identification function.
%
For a discussion of identifiability and calibration in the context of evaluating risk measures, we refer the reader to \cite{Davis2016} and \cite{NoldeZiegel2017}.

\subsection{Novel contributions and structure of the paper}

The aim of this paper is to establish elicitability and identifiability results for systemic risk measures of the form at \eqref{eq:R intro}. Since these risk measures are set-valued rather than real-valued, we use the strict distinction between \emph{selective} and \emph{exhaustive} reports introduced in \citet{FisslerHlavinovaRudloff_theory} along with the corresponding notions of elicitability and identifiability. In a nutshell and translated to the setting of systemic risk measures of the form at \eqref{eq:R intro}, a selective forecast specifies a single capital allocation that makes the system acceptable. On the other hand, exhaustive forecasts are more ambitious, aiming at reporting all adequate capital allocations simultaneously in form of a set. Consequently, exhaustive scoring or identification functions take sets as their first argument, whereas their selective counterparts work with points as inputs.\\
The corresponding definitions along with basic properties and assumptions on systemic risk measures defined at \eqref{eq:R intro} and derived quantities such as \emph{efficient cash-invariant allocation rules} (EARs) \citep{FeinsteinRudloffWeber2017} are gathered in Section \ref{sec:notation}.\\
Section \ref{sec:main results} contains our main results, most notably Theorem \ref{thm:identification R_0} asserting the existence of oriented selective identification functions for $R_0(Y) = \{k\in\R^d\,|\,\Lambda(\r(Y+k))=0\}$, and Theorem \ref{thm:elicitability}, which uses these identification functions to construct strictly consistent exhaustive scoring functions for $R$. 
Interestingly, these scoring functions arise as an integral construction of elementary scores, exploiting the orientation of the identification function. This can be considered a higher-dimensional analogon to the \emph{mixture representation} of scoring functions for one-dimensional forecasts established in the seminal paper \cite{EhmETAL2016}. Similarly, this gives rise to the diagnostic tool of \emph{Murphy diagrams} facilitating the assessment of forecast dominance; see Subsection \ref{subsec:Murphy}. Thanks again to the orientation of the identification functions we derive order-sensitivity results of these consistent scoring functions (Proposition \ref{prop:order-sensitivity}). Concerning EARs mentioned above, Proposition \ref{prop:EAR identifiability} establishes strict selective identification functions for EARs, interestingly mapping to a function space. \\
Since systemic risk measures $R$ of the form at \eqref{eq:R intro} are translation equivariant in the sense that $R(Y+k) = R(Y)  -k$ for all $k\in\R^d$ and---under mild assumptions---homogeneous in that $R(cY) = cR(Y)$ for all $c>0$ (Lemma \ref{lem:R pos hom}), it makes sense to determine the subclasses of translation invariant or positively homogeneous consistent scoring functions for $R$, which is the content of Section \ref{sec:secondary}.\\
The elicitability results on $R$ rely on the identifiability\,/\,elicitability of the underlying scalar risk measure $\r$. This spells doom for the elicitability of systemic risk measures induced by ES as a scalar risk measure. Section \ref{sec:ES} outlines this issue and establishes a solution to this challenge at the cost of a higher forecast complexity. Similarly to the scalar case, considering a pair of $R$ based on ES together with a VaR-related quantity leads to selective identifiability and exhaustive elicitability results (Proposition \ref{prop:ES ident} and Theorem \ref{thm:ES elicitability}).\\
The practical applicability of our results is demonstrated in terms of a simulation study, being the content of Section \ref{sec:simulations}. Employing Diebold-Mariano tests, we examine how well the strictly consistent scores are able to distinguish different forecast performances. We also graphically illustrate the diagnostic tool of Murphy diagrams in a simulation example, utilising a traffic-light approach suggested in \cite{FisslerETAL2016}.\\
Section \ref{sec:discussion} closes the paper with a discussion and outlook of possible applications of our results and avenues of future research.\\
All proofs and purely technical results are deferred to the Appendix. 
Results concerning risk measures insensitive with respect to capital allocations along with some additional graphics of simulation results are collected in an online supplementary material \cite{Supplementary}.

\section{Notation and terminology}\label{sec:notation}
\subsection{Measures of systemic risk}\label{sec:measures of systemic risk}

We consider set-valued systemic risk measures studied in \cite{FeinsteinRudloffWeber2017}. In particular, we concentrate on law-invariant risk measures $R$ that are induced by some law-invariant scalar risk measure $\rho$. To settle some notation, let $(\Omega, \mathfrak F, \mathbb P)$ be an atomless probability space. For some integer $d\ge1$, let $\Y^d \subseteq L^0(\Omega;\R^d)$  be some subclass of $d$-dimensional random vectors. From a risk management perspective, a random vector $Y = (Y_1, \ldots, Y_d)\in\Y^d$ represents the respective gains and losses of a system of $d$ financial firms. That is, positive values of the component $Y_i$ represent gains of firm $i$ and negative values correspond to losses. Let $\M^d$ be the class of probability distributions of elements of $\Y^d$. 
Let $\Lambda \colon\R^d\to\R$ be an \emph{aggregation function} meaning that it is non-decreasing with respect to the componentwise order. An aggregation function is typically, but not necessarily, assumed to be continuous or even concave.
We introduce $\Y\subseteq L^0(\Omega;\R)$ where $ \{\Lambda(Y) \,|\, Y\in\Y^d\}\subseteq \Y$ and let $\M$ be the class of distributions of elements of $\Y$.
Where convenient we will tacitly assume that $\Y^d$ and $\Y$ are closed under translation meaning that $X\in\Y, Y\in\Y^d$ implies that $X+m\in\Y$ and $Y+k\in\Y^d$ for all $m\in\R$, $k\in\R^d$. 

We consider some scalar monetary law-invariant risk measure $\rho\colon\Y\to\R$ \citep{ArtznerDelbaenETAL1999}. That means, we can alternatively consider $\r$ as a map $\rho\colon \M\to\R$  such that for a random variable $X\in\Y$ with distribution $F_X\in\M$ we define $\r(F_X):= \r(X)$. We assume that $\r$ is cash-invariant, that is, $\r(X+m) = \r(X) - m$ for all $m\in\R$ and all $X\in\Y$, and monotone, meaning $X\ge Z$ $\mathbb P$-a.s.\ implies that $\r(X)\le \r(Z)$ for all $X,Z\in\Y$. We often dispense with the usual normalisation assumption that $\rho(0)=0$. \\
\begin{rem}
Often, the scalar risk measure is assumed to map to $\R^* = (-\infty,\infty]$. We could also do that at the costs of a more technical treatment. However, to avoid unnecessary technicalities, we refrain from that and will assume throughout the paper that any scalar risk measure will attain real values only.
\end{rem}
We present the two most natural law-invariant set-valued measures of systemic risk
that are based on $\r$ and $\Lambda$, namely
\begin{align}
\label{eq:R}
&R\colon \Y^d\to 2^{\R^d}, && Y \mapsto  R(Y) = \{k\in\R^d\,|\,\r(\Lambda (Y+k))\le0\},\\
\label{eq:Rins}
&R^{ins}\colon \Y^d \to 2^{\R^d}, && Y \mapsto  R^{ins}(Y) = \{k\in\R^d\,|\,\r(\Lambda (Y)+\bar k)\le0\}.
\end{align}
In \eqref{eq:Rins} and later on, we use the shorthand $\bar k := \sum_{i=1}^d k_i$ for some vector $k = (k_1, \ldots, k_d)\in\R^d$. 
Note the difference between $R$ and $R^{ins}$. The risk measure $R$ takes an \emph{ex ante} perspective in the sense that it specifies all capital allocations $k\in\R^d$ needed to be added to the system $Y$ to make the aggregated system $\Lambda(Y+k)$ acceptable under $\r$. On the other hand, $R^{ins}$ takes an \emph{ex post} perspective on quantifying the risk of the system $Y$. That means it first considers the current aggregated system $\Lambda(Y)$ and then specifies the \emph{total} capital requirement $\bar k$ one needs to add to make the aggregated system acceptable, which amounts to specifying the \emph{bail-out costs} of the aggregated system $\Lambda(Y)$ under $\r$. In particular, the risk measure $R^{ins}$ is \emph{insensitive} to the capital allocation to each financial firm, disregarding possible transaction costs or other dependence structures between the financial firms. This justifies the mnemonic terminology.
We would like to remark that both risk measures, $R$ and $R^{ins}$, can be of interest in applications, taking into regard the different perspectives on systemic risk. However, the mathematical treatment and complexity differ considerably: Due to the cash-invariance of $\r$, $R^{ins}$ takes the equivalent form
\[
R^{ins}(Y) = \{k\in\R^d\,|\,\r(\Lambda (Y))\le \bar k\}.
\]
This means that $R^{ins}$ is actually a bijection of the scalar risk measure $\r \circ \Lambda\colon\M^d\to\R$ considered in \cite{ChenIyengarMoallemi2013}.
Therefore, one has to evaluate the risk measure $\r$ only once to determine $R^{ins}$. In contrast, such an appealing equivalent formulation is generally not available for $R$, unless $\Lambda$ is additive, or is even the sum in which case $R$ and $R^{ins}$ coincide. Consequently, in general, one is bound to evaluate $\r$ infinitely often to compute $R$; see also the discussion in \cite{FeinsteinRudloffWeber2017}.
The main focus of this paper are elicitability and identifiability results of systemic risk measures of the form at \eqref{eq:R} and \eqref{eq:Rins}. However, since one can exploit the one-to-one relation between $R^{ins}$ and $\r \circ \Lambda$ and make use of the revelation principle \citep{Osband1985, Gneiting2011, Fissler2017} to establish (exhaustive) elicitability and identifiability results, we do not present results about $R^{ins}$ in the main body of the paper, but rather defer it to the online supplementary material \cite{Supplementary}.

%

For the sake of completeness, we evoke the most important properties of $R$ presented in \cite{FeinsteinRudloffWeber2017}.
Due to the properties that $\r$ is cash-invariant and that $\Lambda$ is increasing, we obtain that the values of $R$ defined at \eqref{eq:R} are \emph{upper sets}. That means for any $Y\in\Y^d$, $R(Y) = R(Y) +\R_+^d $, where $\R_+^d:= \{x\in\R^d\,|\,x_1, \ldots, x_d\ge0\}$ and where for any two sets $A, B\subseteq \R^d$, $A+B:= \{a+b\,|\,a\in A, \ b\in B\}$ denotes the usual \emph{Minkowski sum}. \emph{Mutatis mutandis}, the same is true for $R^{ins}$. Following the notation of \cite{FeinsteinRudloffWeber2017}, we denote the collection of upper sets in $\R^d$ with ordering cone $\R_+^d$ as 
\[
\P(\R^d; \R^d_+) := \{B\subseteq \R^d\,|\, B = B+\R^d_+\}.
\]
Note that both $\R^d$ and $\emptyset$ are elements of $\P(\R^d; \R^d_+) $. Moreover, $R$ defined at \eqref{eq:R} can attain these values even if the underlying scalar risk measure $\r$ maps to $\R$ only, e.g., when $\Lambda$ is bounded. While the case $R(Y) = \emptyset$ corresponds to the case that a scalar risk measure of a financial position is $+\infty$, meaning that the system $Y+k$ is deemed risky no matter how much capital is injected, the case $R(Y) = \R^d$ corresponds to $-\infty$ in the scalar case. The latter situation of ``cash cows'' with the possibility to withdraw any finite amount of money without rendering the position risky is usually deemed unrealistic and is excluded. Therefore, we shall usually only discuss the former case, but remark that a treatment of the latter were also possible for most results. \\
The monotonicity carries over to $R$ ($R^{ins}$) in that for all $Y,Z\in\Y^d$ with $Y\ge Z$ $\mathbb P$-a.s.\ componentwise,
$R(Y)\supseteq R(Z)$. The cash invariance carries over to $R$ such that for all $Y\in\Y^d$ and all $k\in\R^d$, $R(Y+k) = R(Y) - k$. Note, however, that $R^{ins}$ is in general not cash-invariant.
\\
To shorten the notation, we also introduce further subclasses of $\P(\R^d; \R^d_+)$ where $\mathcal B(\R^d)$ denotes the Borel-$\sigma$-algebra.

\begin{defn}
\begin{enumerate}[(i)]
\item
The class of Borel-measurable upper subsets of $\R^d$ is denoted with
$\widehat \P(\R^d;\R^d_+):= \big(\P(\R^d;\R^d_+)\cap \mathcal B(\R^d)\big) \setminus\{\R^d\}$.
\item
The class of closed upper subsets of $\R^d$ is denoted with 
$\F(\R^d;\R^d_+)$.
Note that $\F(\R^d;\R^d_+)\subset \widehat \P(\R^d;\R^d_+)$.
\end{enumerate}
\end{defn}
\noindent
We shall regularly make use of the following assumptions.
\begin{ass}\label{ass:measurability}
For all $Y\in\Y^d$, $R(Y)\in \widehat \P(\R^d;\R^d_+)$.
\end{ass}

\begin{ass}\label{ass:closedness}
For all $Y\in\Y^d$, $R(Y)\in\F(\R^d;\R^d_+)$ and the set $\{k\in\R^d\,|\,\r(\Lambda (Y+k))=0\}$ corresponds to the topological boundary $\partial R(Y)$ of $R(Y)$.
\end{ass}


If $\Lambda\colon\R^d\to\R$ is continuous and $\r$ satisfies the Fatou property (which means it is lower-semicontinuous), the values of $R$ are closed. Note that the law-invariance of $\r$ implies the Fatou property \citep{JouiniETAL2006}. 
If moreover the function $\Lambda\colon\R^d\to\R$ is strictly increasing, the second part of Assumption \eqref{ass:closedness} is satisfied as well. 
Similarly to $R$, 
we introduce the law-invariant map
\begin{align}
\label{eq:R_0}
&R_0\colon \Y^d\to 2^{\R^d}, \qquad Y \mapsto  R_0(Y) = \{k\in\R^d\,|\,\r(\Lambda (Y+k))=0\},
\end{align}
Note that for $R(Y) \neq \emptyset$, 
$R_0(Y)$ is non-empty if the first part of Assumption \eqref{ass:closedness} is satisfied. Since $\Lambda$ is increasing and $\r$ is cash-invariant, one then obtains the relation
\[
R(Y) = R_0(Y) + \R_+^d.
\]
That means that the values of $R_0$ 
determine $R$ 
completely. 
Moreover, if $\Lambda$ is strictly increasing, then $R_0(Y)$ can be characterised as the topological boundary of $R(Y)$ which has the interpretation that $R_0(Y)$ contains the \emph{efficient} capital allocations that make $Y$ acceptable under $R$. That means for such situations, $R$ and $R_0$ are connected via a one-to-one relation. Again, this means that exhaustive elicitability results for $R$ (Theorem \ref{thm:elicitability}) carry over to $R_0$ for such situations, invoking the revelation principle.

Finally, we introduce an important scalarization of the systemic risk measure $R$, called \emph{efficient cash-invariant allocation rule (EAR)}, as introduced in \cite{FeinsteinRudloffWeber2017}. Under certain circumstances, an $EAR$ can also be considered as a \emph{selection} of $R$, or alternatively, of $R_0$.\footnote{A selection of a non-empty set $A$ is any singleton $\{a\}\subseteq A$.}
Roughly speaking, for $Y\in\Y^d$, the value of $EAR(Y)$ gives the capital allocation(s) with minimal weighted costs of an allocation in $R(Y)$. For simplicity, we shall confine attention to $EARs$ with a \emph{fixed} price or weight vector $w\in\R^d_{++}:= \{x\in\R^d\,|\,x_1, \ldots, x_d>0\}$. To settle some notation, we introduce the following definition.

\begin{defn}[EAR]\label{defn:EAR}
An \emph{efficient cash-invariant allocation rule} for a fixed price vector $w\in\R^d_{++}$ is given by 
\be{eq:EAR}
EAR_w(Y)=\argmin\limits_{k\in R(Y)} w^\top k\,. 
\ee
\end{defn}
Note that $EAR_w(Y)$ is well defined and non-empty for $w\in\R^d_{++}$ if $R(Y)$ is closed and there is a supporting hyperplane for $R(Y)$ that is orthogonal to $w$. $EAR_w(Y)$ is then necessarily the intersection of $R_0(Y)$ and this hyperplane. If $R(Y)$ is not closed, the minimum in \eqref{eq:EAR} might not exist, resulting in $EAR_w(Y)=\emptyset$. If, on the other hand, there is no supporting hyperplane for $R(Y)$ orthogonal to $w$, $w^\top k$ for $k\in R(Y)$ is unbounded from below and we have again $EAR_w(Y)=\emptyset$. \\
As discussed in \cite{FeinsteinRudloffWeber2017}, $EAR_w(Y)$ is actually not necessarily a singleton. More precisely, for closed $R(Y)$ it fails to be a singleton if and only if $\partial R(Y)$ contains a line segment that is orthogonal to the price vector $w$.

Since the scalar risk measure $\r$ is assumed to be law-invariant, also the derived quantities $R$, $R^{ins}$, $R_0$ and $EAR_w$ are law-invariant. Therefore, we shall frequently abuse notation and write $R(F_Y):= R(Y)$ for $Y\in\Y^d$ with distribution $F_Y\in\M^d$; with analogous conventions for the other law-invariant maps.

\subsection{Elicitability and identifiability of set-valued functionals}
\label{subsec:definitions elicitability}

We have already mentioned the definitions of elicitability and identifiability for scalar risk measures $\rho\colon\M\to\R$ 
at \eqref{eq:consistent} and \eqref{eq:id}. All other risk measures considered, $R$, $R_0$ and $EAR$, are set-valued, assuming subsets of $\R^d$. Hence, we make use of the theoretical framework on forecast evaluation of set-valued functionals introduced in \cite{FisslerHlavinovaRudloff_theory}. The main idea is to have a thorough distinction in the form of the forecasts between a \emph{selective} notion where forecasts are single points and an \emph{exhaustive} mode where forecasts are set-valued. 
Moreover, corresponding notions of identifiability and elicitability are introduced and discussed in a very general setting, with the main result being that a set-valued functional is elicitable either in the selective, or the exhaustive sense, or it is not elicitable at all \citep[Theorem 2.14]{FisslerHlavinovaRudloff_theory}. 
To allow for a concise presentation, we confine ourselves to introducing only the notions we need and we shall do it directly in terms of $R$ and $R_0$; the case of $EAR$ will be considered later separately. 
In the sequel, let $\mathcal A\subseteq \mathcal B(\R^d)$. Moreover, for scoring functions $S\colon\mathcal A\times \R^d\to\R$ or identification functions $V\colon\R^d\times\R^d\to\R$ we will use the shorthands $\bar S(A,F) := \int S(A,y)\dint F(y)$ and $\bar V(x,F) := \int V(x,y)\dint F(y)$, $A\in\mathcal A$, $x\in\R$, and will tacitly assume that these integrals exist for all $F\in\M^d$.
\begin{defn}
\begin{enumerate}[(i)]
\item
An exhaustive scoring function $S\colon\mathcal A\times \R^d\to\R^* :=(-\infty, \infty]$ is called $\M^d$-\emph{consistent} for $R\colon\M^d\to\mathcal A$ if
\be{eq:strong cons}
\bar S(R(F), F) \le \bar S(A,F) \qquad \forall A\in\mathcal A, \ \forall F\in\M^d.
\ee
The exhaustive score $S$ is \emph{strictly $\M^d$-consistent} for $R$ if it is $\M^d$-consistent for $R$ and if equality in \eqref{eq:strong cons} implies that $A=R(F)$.
\item
The risk measure $R\colon\M^d\to\mathcal A$ is exhaustively elicitable if there is a strictly $\M$-consistent exhaustive scoring function for $R$.
\end{enumerate}
\end{defn}
Note that the \emph{strict} consistency of an exhaustive scoring function $S$ for $R$ implies that $\bar S(R(F),F) \in\R$ for all $F\in\M^d$.
%

\begin{defn}
\begin{enumerate}[(i)]
\item
A map $V\colon\R^d\times \R^d\to\R$ is a \emph{selective $\M^d$-identification function} for $R_0\colon\M^d\to\mathcal A$ if $\bar V(x,F)=0$ for all $x\in R_0(F)$ and for all $F\in\M^d$. Moreover, $V$ is a \emph{strict} selective $\M^d$-identification function for $R_0$ if 
\be{eq:V selective def}
\bar V(x,F) = 0 \quad \Longleftrightarrow \quad  x\in R_0(F), \qquad \forall x\in\R^d, \quad \forall  F\in\M^d.
\ee
\item
The risk measure $R_0\colon\M^d\to\mathcal A$ is selectively identifiable if there is a strict selective $\M^d$-identification function for $R_0$.
%
\end{enumerate}
\end{defn}

\section{Main results} \label{sec:main results}

We present some of the main results of the paper in this section where we gather identifiability results in Subsection \ref{subsec: Set-identifiability of R_0} and elicitability results are presented in Subsection \ref{subsec:elicitability results}.
Theorem \ref{thm:identification R_0} establishes the selective identifiability of $R_0$. 
Notably, the main assumption behind Theorem \ref{thm:identification R_0} and the subsequent results relying on this identifiability is the identifiability of the underlying scalar risk measure $\r$ in \eqref{eq:R}. \emph{A fortiori}, it needs to admit an oriented identification function. According to \cite{SteinwartPasinETAL2014}, a strict identification function $V\colon\R\to\R$ for a real-valued risk measure $\r \colon\M\to\R$ is called \emph{oriented} if for all $x\in\R$ and for all $F\in\M$
\[
\bar V(x,F) \begin{cases}
<0, & \text{if }x<\r(F) \\
=0, & \text{if }x =\r(F) \\
>0, & \text{if }x>\r(F).
\end{cases}
\]
Invoking Theorem 8 in \cite{SteinwartPasinETAL2014} the existence of an oriented identification function for $\r$ is equivalent to the elicitability of $\r$ under mild regularity conditions. Proposition \ref{prop:R0 to rho} 
establishes that under certain assumptions on the aggregation function $\Lambda$ also the converse holds. That is, the elicitability of $R$ implies the elicitability of $\r$.




\subsection{Identifiability results}\label{subsec: Set-identifiability of R_0}


\begin{thm}\label{thm:identification R_0}
Let $\r\colon\M\to\R$ be identifiable.
Then the following assertions hold for $R_0\colon\M^d\to 2^{\R^d}$ defined at \eqref{eq:R_0}.
\begin{enumerate}[\rm(i)]
\item
$R_0$ is selectively identifiable. If $V_\r\colon\R\times \R\to\R$ is a strict $\M$-identification function for $\r$, then 
\be{eq:def V_R_0}
V_{R_0}\colon \R^d\times \R^d\to\R, \qquad (k,y) \mapsto V_{R_0}(k,y) = V_\r(0,\Lambda(y+k))
\ee
is a strict selective $\M^d$-identification function for $R_0$.
\item
If $V_\r\colon\R\times \R\to\R$ is an oriented strict $\M$-identification function for $\r$, then $V_{R_0}$ defined at \eqref{eq:def V_R_0} is oriented for $R_0$ in the sense that for all $F\in\M^d$ it holds that
\be{eq:orientation 3}
\bar{V}_{R_0}(k,F) \begin{cases}
<0, & \text{if } k\notin R (F) \\
=0, & \text{if } k\in R_0(F) \\
>0, & \text{if } k\in  R(F) \setminus R_0(F).
\end{cases}
\ee
\end{enumerate}
\end{thm}

\begin{rem}\label{rem:orientation}
The orientation of $V_{R_0}$ can be considered as the multivariate counterpart of the orientation of $V_\r$ with respect to the componentwise order on $\R^d$. Indeed, in both cases, a negative expected identification function corresponds to the case of predicting a capital requirement too small to make the system $Y\in\Y^d$ acceptable with respect to $R$ or the single firm $X\in\Y$ acceptable with respect to $\r$.
%
\end{rem}

Note that if $V_{R_0}\colon\R^d\times \R^d\to\R$ is a strict selective $\M^d$-identification function for $R_0$ which is oriented in the sense of \eqref{eq:orientation 3} and which is such that the expected identification function $\bar V_{R_0}(\cdot,F)$ is continuous for any $F\in\M^d$, then the values of $R$ are closed sets. 

%
%

\begin{rem}\label{rem:uniqueness V_R_0}
Equation \eqref{eq:def V_R_0} states explicitly how to construct a strict selective $
\M^d$-identification function $V_{R_0}\colon\R^d\times \R^d\to\R$ for $R_0$, given a certain strict $\M$-identification function $V_\r\colon\R\times \R\to\R$ for $\r$. So $V_{R_0}$ definitely depends on the choice of $V_\r$. \citet[Proposition 3.2.1]{Fissler2017} states that under some richness assumptions on the class $\M$, any other strict identification function $\widetilde V_\r\colon \R\times \R\to\R$ for $\r$ is of the form
\be{eq:tilde V_r}
\widetilde V_\r(x,z) = g(x)V_\r(x,z),
\ee
where $g\colon \R\to\R$ is a non-vanishing function. Moreover, if $V_\r$ is oriented, then $\widetilde V_\r$ is oriented if and only if the function $g$ in \eqref{eq:tilde V_r} is strictly positive; see also \citet[Theorem 8]{SteinwartPasinETAL2014}. Consequently, starting with such an identification function $\widetilde V_\r$, the resulting (oriented) strict selective $\M^d$-identification function $\widetilde V_{R_0}\colon\R^d\times \R^d\to\R$ takes the form
\be{eq:tilde V_R_0}
\widetilde V_{R_0}(k,y) = \widetilde V_\r(0,\Lambda(y+k)) = g(0)V_\r(0,\Lambda(y+k)).
\ee
Hence, the only difference is that one ends up with a scaled version of $V_{R_0}$ where the scaling factor $g(0)$ is positive if both $V_{R_0}$ and $\widetilde V_{R_0}$ are oriented.
\end{rem}
In a similar spirit as Remark \ref{rem:uniqueness V_R_0}, one might also wonder whether the (oriented) strict selective identification functions constructed in Theorem \ref{thm:identification R_0} are the only (oriented) strict selective identification functions for $R_0$. This is definitely not the case since due to the linearity of the expectation, any function $V'_{R_0}\colon \R^d\times \R^d\to\R$ with
\be{eq:uniqueness V_R_0 2}
V'_{R_0}(k,y) = h(k)V_{R_0}(k,y)=  h(k)V_\r(0,\Lambda(y+k)),
\ee
where $h\colon\R^d\to\R$ is non-vanishing, is again a strict selective $\M^d$-identification function for $R_0$. Moreover, if $V_{R_0}$ is oriented, then $V'_{R_0}$ defined at \eqref{eq:uniqueness V_R_0 2} is oriented if and only if $h>0$. In particular, the constant $g(0)$ appearing in \eqref{eq:tilde V_R_0} can be incorporated into the function $h$ such that we see that it does not matter which (oriented) strict exhaustive identification function $\widetilde V_\r$ we choose to end up with the form at \eqref{eq:uniqueness V_R_0 2}. The following theorem establishes that basically all selective $\M^d$-identification functions for $R_0$ are of the form at \eqref{eq:uniqueness V_R_0 2}.

\begin{prop}\label{prop:characterization V}
Let $\A\subseteq \R^d$ and let $V_{R_0}, V'_{R_0}\colon \A\times \R^d\to\R$ be strict selective $\M^d$-identification functions for $R_0\colon\M^d\to 2^{\R^d}$. If for every $x\in\A$ there are $F_1, F_2\in\M^d$ such that $\bar{V}_{R_0}(x,F_1)>0$ and $\bar{V}_{R_0}(x,F_2)<0$ and $\M^d$ is convex, then there is a non-vanishing function $h\colon\A\to\R$ such that
\be{eq:char V}
\bar{V}'_{R_0}(x,F)=h(x)\bar{V}_{R_0}(x,F)
\ee
for all $x\in\A$ and all $F\in\M^d$.
\end{prop}

\begin{rem}
\begin{enumerate}[(i)]
\item
It is worth mentioning that the assumptions of Proposition \ref{prop:characterization V} imply that $R_0$ is surjective on $\A\subseteq \R^d$. That is why we formulated the proposition in terms of a general action domain $\A\subseteq \R^d$ rather than $\R^d$.
\item
If $\M^d$ is rich enough, and under additional regularity conditions on $V_{R_0}$, one can also establish a pointwise version of \eqref{eq:char V}; see \cite{FisslerZiegel2016, Erratum} for details. 
%
\end{enumerate}
\end{rem}
Finally, we turn our attention to the efficient cash-invariant allocation rules that represent a possibility to choose one capital allocation that would make the system acceptable, namely the cheapest one with respect to a weight vector $w$.
For any vector $w\in\R^d$, we use the notation $w^\bot:= \{x\in\R^d\,|\,w^\top x =0\}$ for the orthogonal complement of the subspace spanned by $w$. With $\R^{w^\bot}$ we denote the space of all functions mapping from $w^\bot$ to $\R$.

\begin{prop}\label{prop:EAR identifiability}
Let $\r$ be a scalar risk measure, $R$ and $R_0$ as defined in \eqref{eq:R} and \eqref{eq:R_0}, and suppose that Assumption (\ref{ass:closedness}) holds. Assume that $R_0$ is selectively identifiable with an oriented strict selective $\M^d$-identification function $V_{R_0}$. Let $w\in\R^d_{++}$ and define the map $V_{{EAR}_w}\colon\R^d\times\R^d\to\R^{w^\bot}$ via
\[
V_{EAR_w}(k,y)\colon w^\bot\to\R, \qquad w^\bot\ni x\mapsto V_{EAR_w}(k,y)(x)=V_{R_0}(k+x,y)
\]
for $(k,y)\in\R^d\times \R^d$.
Then $V_{{EAR}_w}$ defined in \refeq{eq:EAR} is a strict selective $\M^d$-identification function for $EAR_w$ defined at \eqref{eq:EAR} in the sense that for any $F\in\M^d$ and any $k\in\R^d$
\be{eq:identification EAR}
k\in EAR_w(F)\ \Longleftrightarrow \ \left(\bar{V}_{EAR_w}(k,F)\leq0 
\  \wedge\ \bar{V}_{EAR_w}(k,F)(0)=0\right).
\ee
\end{prop}

If the underlying risk measure $R$ is known to assume convex sets only (e.g.\ if $\r$ is convex and $\Lambda$ concave, see \cite{FeinsteinRudloffWeber2017}), it is even sufficient to evaluate $\bar{V}_{EAR_w}(k,F)(x)$, or its empirical counterpart, for $x\in\R^d$ in a neighbourhood of 0, which can also be seen nicely in Figure \ref{fig:proofEAR} in Appendix \ref{app:proofs main results}.
%

In Section 2.4 of \cite{FisslerHlavinovaRudloff_theory} different versions of the Convex Level Sets (CxLS) property are introduced and their necessity for identifiability and elicitability for set-valued functionals is discussed. 
Since the selective identifiability in Proposition \ref{prop:EAR identifiability} deviates from the usual definition, it is worth noting that, under the conditions of Proposition \ref{prop:EAR identifiability}, $EAR_w$ satisfies the selective CxLS property. However, we do not see how to establish the selective CxLS* property or, alternatively, the exhaustive CxLS property, which leaves the question open if and in what sense $EAR_w$ might be elicitable.

We would like to compare the concept of identifiability introduced in Proposition \ref{prop:EAR identifiability} to the discussion about the backtestability of \emph{loss value at risk} in Section 5 of \cite{BignozziBurzoniMunari2018}. One can interpret their proposal as using a function-valued identification function, too. Then, their analogue of \eqref{eq:identification EAR} is that the infimum of the function-valued identification function be 0 if using the correctly specified forecast. Interestingly, this version of identifiability does not imply that the functional under consideration has convex level sets.

%

We end this section by noting that the identifiability of $\r$ and the selective identifiability of $R_0$ are even equivalent if $\Lambda\colon\R^d\to\R$ possesses a measurable right inverse. 

\begin{prop}\label{prop:R0 to rho}
Let $\r\colon\M\to\R$ be a risk measure, $\Lambda\colon\R^d\to\R$ a surjective aggregation function, and $R_0\colon\M^d\to 2^{\R^d}$ as defined in \eqref{eq:R_0}. 
Assume that there exists a measurable right inverse $\eta\colon \R\to\R^d$ such that $\Lambda\circ \eta = \mathrm{id}_\R$, $\Y$ is closed under translations, and that for any $X\in\Y$, $\eta(X)$ belongs to $\Y^d$. Then it holds that 
$\r$ is (selectively) identifiable if and only if $R_0$ is selectively identifiable. 
\end{prop}


\subsection{Elicitability results and mixture representation}\label{subsec:elicitability results}

In the seminal paper \cite{EhmETAL2016} it is shown that, subject to regularity conditions, any non-negative scoring function $S\colon\R\times\R\to [0,\infty]$ which is consistent for the $\a$-quantile (the $\tau$-expectile) can be written as a \emph{mixture} or \emph{Choquet representation} 
\be{eq:mixture}
S(x,y) = \int_{\R} S_{\theta}(x,y)\dint H(\theta), \qquad x,y\in\R,
\ee
where $H$ is a non-negative measure on $\mathcal B(\R)$ and $S_{\theta}$, $\theta\in\R$, are non-negative elementary scoring functions for the $\a$-quantile (the $\tau$-expectile). In particular, $S_\theta$ take the form
\begin{align}\label{eq:elementary}
S_{\theta}(x,y) &= \big(\one\{\theta < x\} - \one\{\theta < y\}\big) V(\theta, y) 
\end{align}
where $V$ is an \emph{oriented} identification function for the $\a$-quantile (the $\tau$-expectile). The score at \eqref{eq:mixture} is strictly consistent if and only if the measure $H$ is strictly positive, that is, it puts positive mass on any open non-empty set. \cite{Ziegel_Discussion2016} and \cite{Dawid_Discussion2016} argued that this construction also works for more general one-dimensional functionals besides expectiles and quantiles which admit an oriented identification function; cf.\ \cite{JordanMuehlemannZiegel2019}.  \cite{SteinwartPasinETAL2014} showed that, for one-dimensional functionals satisfying certain regularity conditions, the existence of an oriented identification function is equivalent to the elicitability of the functional. While the orientation of the identification function immediately gives rise to the consistency of the elementary scores, and thus, of the mixtures at \eqref{eq:mixture}, an answer to the question as to whether all scoring functions for a certain functional are \emph{necessarily} of the form at \eqref{eq:mixture} can typically only be answered invoking Osband's Principle \citep{Osband1985, FisslerZiegel2016} hence assuming smoothness and regularity conditions.

Our construction of strictly consistent exhaustive scoring functions for the systemic risk measures $R$ also exploits the key result about the existence of oriented strict selective identification functions for $R_0$ and is similar in nature to the approach described above. 
For any $y\in\R^d$, we shall use the notation $R(y) := R(\delta_y)$.

\begin{thm}\label{thm:elicitability}
Let $V_{R_0}\colon\R^d\times \R^d\to\R$ be such that for all $F\in\M^d\cup\{\delta_y\,|\,y\in\R^d\}$ 
\be{eq:weak orientation}
\bar{V}_{R_0}(k,F) \in 
\begin{cases}
(-\infty,0], & \text{if } k\notin R (F) \\
[0,\infty), & \text{if } k\in  R(F).
\end{cases}
\ee
\begin{enumerate}[\rm (i)]
\item
Under Assumption \eqref{ass:measurability}, for each $k\in\R^d$, the map $S_{R,k}\colon \widehat \P(\R^d;\R^d_+)\times \R^d\to[0,\infty)$,
\be{eq:S_{R,k}}
 S_{R,k}(A,y) = \big(\one_{R(y) \setminus A}(k) - \one_{A\setminus R(y)}(k)\big)V_{R_0}(k,y)
\ee
is a non-negative $\M^d$-consistent exhaustive scoring function for $R\colon \M^d\to \widehat\P(\R^d;\R^d_+)$.
\item
Under Assumption \eqref{ass:measurability} and if $\pi$ is a $\sigma$-finite non-negative measure on $\mathcal B(\R^d)$, the map $S_{R,\pi}\colon \widehat \P(\R^d;\R^d_+)\times \R^d\to[0,\infty]$,
\be{eq:S_R,pi}
S_{R,\pi}(A,y) =  \int_{\R^d} S_{R,k}(A,y)\,\pi(\diff k)
\ee
is a non-negative $\M^d$-consistent exhaustive scoring function for $R\colon \M^d\to \widehat\P(\R^d;\R^d_+)$.
\item
If Assumption \eqref{ass:closedness} holds, if $V_{R_0}$ is a strict selective $\M^d$-identification function for $R_0$ and if $\pi$ is a $\sigma$-finite strictly positive measure on $\mathcal B(\R^d)$, then the restriction of $S_{R,\pi}$ defined at \eqref{eq:S_R,pi} to $\F(\R^d;\R^d_+)\times \R^d$ is strictly $ \M^d_0$-consistent for $R\colon  \M^d_0\to \F(\R^d;\R^d_+)$, where $\M^d_0\subseteq \M^d$ is such that $\bar S_{R,\pi}(R(F),F)<\infty$ for all $F\in \M^d_0$.
\end{enumerate}
\end{thm}

Note that the condition at \eqref{eq:weak orientation} is some weak form of orientation. However, it does not imply that $V_{R_0}$ is an identification function for $R_0$. In the one-dimensional setting, such a situation can occur in practice if the underlying risk measure is Value at Risk and the distributions are not continuous, implying that the corresponding quantile identification function will nowhere attain 0 in expectation.

Even though we defer the formal proof of Theorem \ref{thm:elicitability} to Appendix \ref{app:proofs main results}, we would still like to sketch and illustrate the idea, taking into account that Theorem \ref{thm:elicitability} constitutes one of the main results of the paper. The key observation is the identity
\begin{align}\label{eq:consistency 2}
\bar S_{R,\pi}(A,F) - \bar S_{R,\pi}(R(F),F) 
&= \int\limits_{R(F)\setminus A} \bar V_{R_0}(k,F) \,\pi(\diff k) - 
\int\limits_{A\setminus R(F)} \bar V_{R_0}(k,F) \,\pi(\diff k).
\end{align}
Then, one uses the weak orientation of $V_{R_0}$ given at \eqref{eq:weak orientation} to conclude that the first integral on the right hand side of \eqref{eq:consistency 2} is $\ge0$ while the second integral is $\le0$. A graphic illustration of the situation is provided in Figure \ref{fig:proof}. 
\begin{figure}
\centering
\includegraphics[width = 0.7\textwidth]{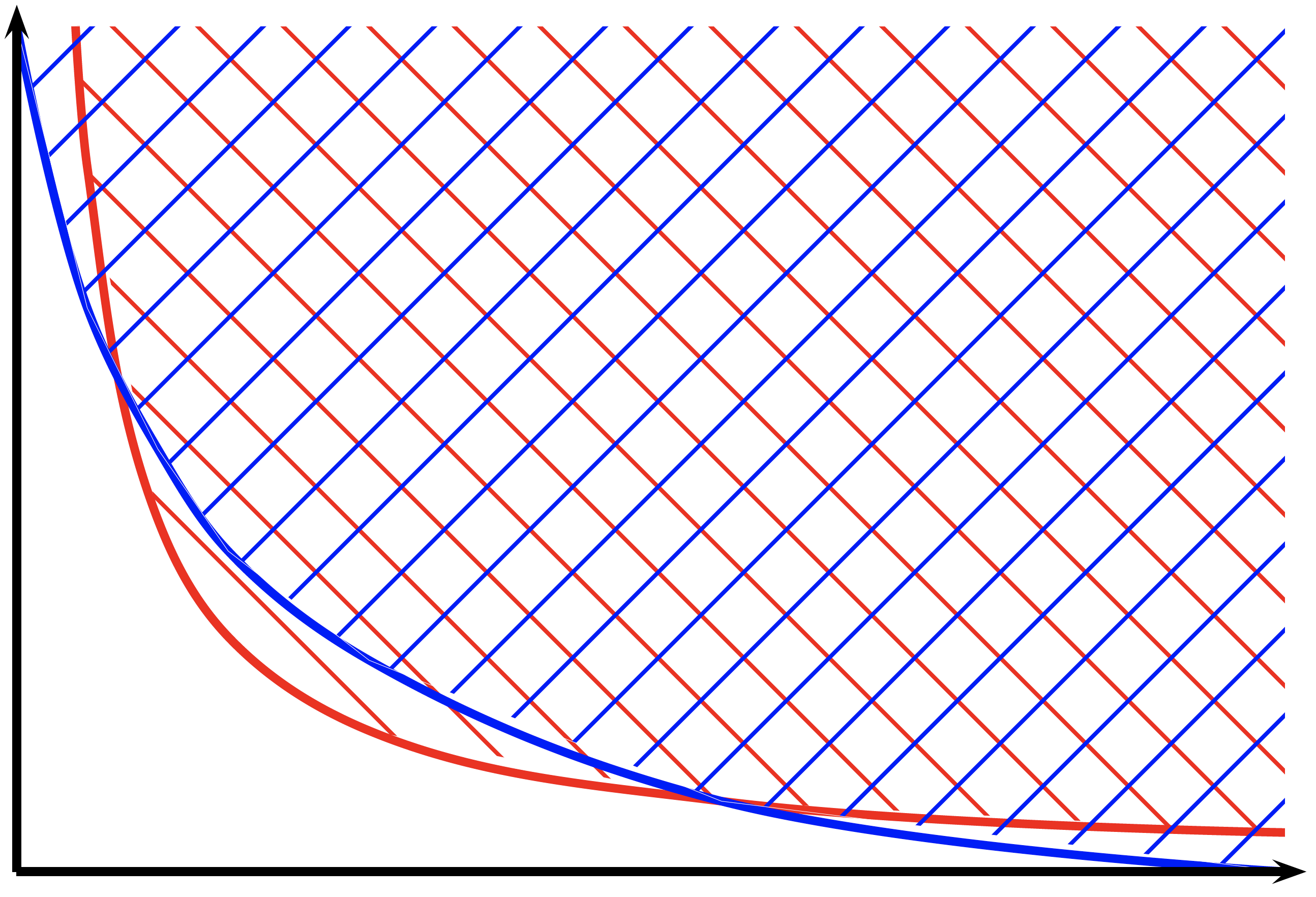}
\caption{A graphical illustration of Equation \eqref{eq:consistency 2} for dimension $d=2$. Suppose the red region corresponds to the correctly specified risk measure $R(F)$ and the blue region corresponds to some misspecified forecast $A$. 
The score difference $\bar S_{R,\pi}(A,F) - \bar S_{R,\pi}(R(F),F)$ is an integral of $\bar V_{R_0}(\cdot, F)$ over $R(F) \setminus A$ (the red only region), plus an integral of $-\bar V_{R_0}(\cdot, F)$ over $A\setminus R(F)$ (the blue only region).}
\label{fig:proof}
\end{figure}

%
%

It is in order to make some comments about the scoring functions constructed in Theorem \ref{thm:elicitability}.

\subsubsection{Comparison with one-dimensional case}

The similarity of the mixture representation at \eqref{eq:S_R,pi} and \eqref{eq:mixture} is obvious. With a closer look, one can also see the similarities on the level of the elementary scores given at \eqref{eq:S_{R,k}} and \eqref{eq:elementary}. Indeed, \eqref{eq:elementary} can be re-written as 
\[
S_{\theta}(x,y) = \big(\one_{[y,\infty)\setminus [x,\infty)}(\theta) - \one_{[x,\infty)\setminus [y,\infty)}(\theta)\big)V(\theta, y).
\]
The form of $R(y)$ can be described explicitly in the following lemma, where we use the fact that $\r(\r(0)) = \r(0) - \r(0) = 0$.

\begin{lem}\label{lem:R(y)}
Let $R(Y) = \{k\in\R^d\,|\,\r(\Lambda(Y+k))\le0\}$, $Y\in\Y^d$, where the scalar risk measure $\r$ is decreasing and cash-invariant, and the aggregation function $\Lambda\colon\R^d\to\R$ is increasing. Then, for each $y\in\R^d$ it holds that
\[
R(y) = \Lambda^{-1}([\r(0), \infty)) - y = \Lambda^{-1}(\{\r(0)\})+\R^d_+ - y. 
\]
\end{lem}


Accounting for the sign convention that the \emph{negative} of a quantile or expectile are a scalar risk measure, one can see that the elementary scores at \eqref{eq:S_{R,k}} essentially boil down to the ones at \eqref{eq:elementary} for dimension $d=1$. 

\subsubsection{Integrability}

The non-negativity of the elementary scores at \eqref{eq:S_{R,k}} guarantees that the integral at \eqref{eq:S_R,pi} always exists. However, as stated in part (iii) of Theorem \ref{thm:elicitability}, these scores are only strictly consistent if $\bar S_{R,\pi}(R(F),F)<\infty$, which suggests the question as to when the integral at \eqref{eq:S_R,pi} is finite. A sufficient condition for the latter is that 
\(
 \int_{\R^d} |V_{R_0}(k,y)|\,\pi(\diff k)<\infty.
\)
Therefore, a sufficient condition for the finiteness of $\bar S_{R,\pi}(A,F)$ is that $V_{R_0}$ is $\pi\otimes F$-integrable.

\subsubsection{Normalisation}

By construction, the elementary scores at \eqref{eq:S_{R,k}}, and therefore the scores at \eqref{eq:S_R,pi}, are non-negative. It is well known that if a scoring function $S(x,y)$ is (strictly) $\M$-consistent for some functional $T$ then for $\lambda>0$ and some $\M$-integrable function $a\colon\O\to\R$, the score $S'(x,y) = \lambda S(x,y) + a(y)$, $\lambda>0$ is also (strictly) $\M$-consistent for $T$. Following \cite{GneitingRaftery2007} we say that $S$ and $S'$ are equivalent. Therefore, if $\M$ contains all point measures, $S'$ is strictly $\M$-consistent for $T$ and $y\mapsto S'(T(\delta_y),y)$ is $\M$-integrable, the score $S(x,y)  = S'(x,y) - S'(T(\delta_y),y)$ is non-negative by construction. However, sometimes relaxing the normalisation condition that a score be non-negative can also help to relax integrability conditions on the scoring function. A standard example is the squared loss $S(x,y) = (x-y)^2$ which is non-negative, consistent for the mean relative to any class of distributions with a finite first moment, and strictly consistent for the mean relative to any class of distributions with a finite second moment. On the other hand, the equivalent score $S'(x,y) = x^2 - 2xy$ maps to $\R$, but is \emph{strictly} consistent for the mean relative to any class of distributions with a finite first moment.

In that light, it might be interesting to consider scores $S'_{R,k}$ which are equivalent to the elementary scores at \eqref{eq:S_{R,k}}. A natural choice might be $S'_{R,k}(A,y) = -\one_A(k) V_{R_0}(k,y)$; cf.\ \cite{Dawid_Discussion2016}. This leads to an alternative mixture representation akin to  \eqref{eq:S_R,pi} of the form
\[
S'_{R,\pi}(A,y) = \int_{\R^d} S'_{R,k}(A,y)\,\pi(\diff k) = -\int_A V_{R_0}(k,y)\,\pi(\diff k).
\]
However, since the integrand $S'_{R,k}(A,y)$ may attain both positive and negative values, one needs to impose that its negative part is $\pi$-integrable in order to guarantee the existence of the integral.

\subsubsection{Characterisation of all consistent scoring functions}

There is evidence that---under appropriate regularity conditions---all consistent scoring functions for the risk measure $R$ are equivalent to a score of the form given at \eqref{eq:S_R,pi}. That means, modulo equivalence, the choice of the consistent scoring function boils down to the choice of the measure $\pi$.\\
Firstly, note that Proposition \ref{prop:characterization V} implies that it does not matter what oriented strict $\M^d$-identification $V_{R_0}$ we actually start with. Indeed, if $V'_{R_0}$ were another such identification function, then $V'_{R_0}(k,y) = h(k) V_{R_0}(k,y)$ for some positive function $h$. But this solely amounts to a change of measure, since $V_{R_0}(k,y)\pi(\diff k )  = V'_{R_0}(k,y)\pi'(\diff k )$, where $\pi'$ has the density $1/h$ with respect to $\pi$.
Secondly, the class of scoring functions of the form \eqref{eq:S_R,pi} is convex, which is a necessary condition \citep{Gneiting2011}. 
Thirdly, as observed above, the mixture representation at \eqref{eq:S_R,pi} is the natural extension of the one-dimensional case. As remarked, for the one-dimensional case, one can typically establish this sort of necessary conditions only invoking Osband's principle. Since Osband's principle relies on a first-order-condition argument, it has only been established under smoothness conditions and for the finite dimensional case. 
We suspect that it is possible to generalise it to the infinite dimensional setting of predicting upper sets in $\F(\R^d;\R^d_+)$. Possible approaches might work by borrowing ideas from the \emph{calculus of variations} or by considering increments of scores rather than derivatives. However, the technical treatment of these approaches is beyond the scope of the paper at hand such that we defer it to future research.

\subsubsection{Order-sensitivity}

It is known that---under weak assumptions on $\r$---all strictly consistent scoring functions $S$ for $\r$ are \emph{order-sensitive} or \emph{accuracy-rewarding}; see \citep[Proposition 3]{Nau1985}, \citep[Proposition 2]{Lambert2013}, \citep[Proposition 3.4]{BelliniBignozzi2015}. In the scalar setting, this property means that $x_1\leq x_2\leq \r(F)$ or $\r(F)\leq x_2\leq x_1$ implies that  $\bar{S}(x_1,F)\geq\bar{S}(x_2,F)$. While one gets this useful property essentially `for free' in the scalar case, asking for order-sensitivity in a multivariate setting is a lot more involved; see \cite{FisslerZiegel2019}. One of the main questions in the multivariate setting is which order relation to use. 
In the present situation where our exhaustive action domain consists of closed upper subsets of $\R^d$, the canonical (partial) order relation is the subset relation. That means the canonical analogue of order-sensitivity in our setting is that for any distribution $F\in\M^d$ it holds that $A\subseteq B\subseteq R(F)$ or $A\supseteq B\supseteq R(F)$ implies that $\bar{S}_{R,\pi}(A,F)\geq\bar{S}_{R,\pi}(B,F)$. The following proposition establishes that this notion of order-sensitivity is fulfilled by all scoring functions introduced in Theorem \ref{thm:elicitability}(ii). 
The proof basically exploits the orientation of the underlying identification function $V_{R_0}$, which is a similar argument to the one given in \cite{SteinwartPasinETAL2014}.


\begin{prop}\label{prop:order-sensitivity}
Let the assumptions of Theorem \ref{thm:elicitability}(ii) prevail. Then, the scoring function $S_{R,\pi}$ defined at \eqref{eq:S_R,pi} is $\M^d$-order-sensitive for $R$ in the sense that for all $A, B\in\widehat\P(\R^d;\R^d_+)$ and for all $F\in\M^d$
\be{eq:order-sensitivity}
\big(A\subseteq B\subseteq R(F) \ \text{or}\  A\supseteq B\supseteq R(F)\big)
\ \implies\ \bar{S}_{R,\pi}(A,F)\geq\bar{S}_{R,\pi}(B,F).
\ee
Under the assumptions of Theorem \ref{thm:elicitability}(iii), if $\bar S_{R,\pi}(B,F)<\infty$ and the inclusions $A\subseteq B$ or $A\supseteq B$ on the left hand side of \eqref{eq:order-sensitivity} is strict, then the inequality on the right hand side is also strict.
\end{prop}

\subsubsection{Forecast dominance and Murphy diagrams}\label{subsec:Murphy}

The notion of (strict) consistency implies that---in expectation---a correctly specified forecast will score at most as high as (strictly less than) any misspecified score. On the level of the prediction space setting \citep{GneitingRanjan2013, StraehlZiegel2017}, \citet{HolzmannEulert2014} showed that for two ideal forecasts, the one measurable with respect to a strictly larger information set is preferred under any strictly consistent scoring function; cf.\ \cite{Tsyplakov2014}. \citet{Patton2017} demonstrated that, in general, two misspecified forecasts rank differently under different (consistent) scoring functions. Therefore, the choice of the scoring function used in practice matters and secondary quality criteria besides consistency, such as translation invariance or homogeneity, may guide the decision what scoring function to use; see Section \ref{sec:secondary}. For the rare situation when one forecast scores better than another one uniformly over all consistent scoring functions, \citet{EhmETAL2016} coined the term \emph{forecast dominance}. We give the corresponding definition here for the situation of exhaustive forecasts for systemic risk measures $R$.

\begin{defn}[Dominance]
Let $Y\in \Y^d$ and $A, B$ two (stochastic) forecasts for some systemic risk measure $R$ of the form at \eqref{eq:R}, taking values in $\widehat \P(\R^d; \R^d_+)$. Assume that Assumption \eqref{ass:measurability} holds. 
Then $A$ dominates $B$ if $\E[S_{R,\pi}(A,Y)]\le \E[S_{R,\pi}(B,Y)]$ for all consistent scoring functions $S_{R,\pi}$ of the form at \eqref{eq:S_R,pi} where $\pi$ is a $\sigma$-finite non-negative measure on $\mathcal B(\R^d)$.
\end{defn}
Note that the expectations are taken over the joint distribution of the forecasts and the observation.\\
Since the scores $S_{R,\pi}$ at \eqref{eq:S_R,pi} are parametrised by the class of non-negative $\sigma$-additive measures on $\mathcal B(\R^d)$, it is not very handy to check forecast dominance in practice using the definition. To this end, the following corollary is helpful. The proof is straightforward and therefore omitted.

\begin{cor}\label{cor:Murphy}
Let $Y\in \Y^d$ and $A, B$ two (stochastic) forecasts for some systemic risk measure $R$ of the form at \eqref{eq:R}, taking values in $\widehat \P(\R^d; \R^d_+)$. Assume that Assumption \eqref{ass:measurability} holds. 
Then $A$ dominates $B$ if and only if $\E[S_{R,k}(A,Y)]\le \E[S_{R,k}(B,Y)]$ for all elementary scores $S_{R,k}$ given at \eqref{eq:S_{R,k}}, where $k\in\R^d$.
\end{cor}

Corollary \ref{cor:Murphy} opens the way to an immediate multivariate analogue of \emph{Murphy diagrams} considered in \cite{EhmETAL2016}. That is, if $A$ is a $\widehat \P(\R^d; \R^d_+)$-valued forecast of a systemic risk measure $R$ and $Y$ is the corresponding $\R^d$-valued observation of a financial system, we can consider the map
\be{eq:Murphy diagram}
\R^d\ni k\mapsto s_A(k) = \E[S_{R,k}(A,Y)]
\ee
as a diagnostic tool. For an empirical setting with forecasts $A_1, \ldots, A_N\in \widehat \P(\R^d; \R^d_+)$ and observations $Y_1, \ldots, Y_N\in\R^d$, \eqref{eq:Murphy diagram} takes the form
\be{eq:Murphy emp}
\R^d\ni k\mapsto \hat s_{N,A}(k) = \frac{1}{N} \sum_{t=1}^N S_{R,k}(A_t,Y_t).
\ee
We illustrate the usage of Murphy diagrams in a simulation study presented in Subsection \ref{sec:Murphy_sim}.

\section{Homogeneous and translation invariant scoring functions}
\label{sec:secondary}

Recall that the systemic risk measure $R$ is cash-invariant, or translation equivariant, meaning that $R(Y + k) = R(Y) - k$ for all $Y\in\Y^d$ and for all $k\in\R^d$. Moreover, if $\r$ and $\Lambda$ are positively homogeneous, so is $R$ in the sense that $R(cY) = cR(Y)$ for all $Y\in\Y^d$ and $c>0$; see Lemma \ref{lem:R pos hom}. \citet{Patton2011}, \citet{NoldeZiegel2017} and \citet{FisslerZiegel2019} have argued that it is a reasonable requirement for a scoring function that ordering different sequences of forecasts in terms of their realised scores be invariant under transformations to which the functional of interest is equivariant. Therefore, we discuss translation invariance and positive homogeneity of scores (or score differences) for $R$. We start by gathering some elementary definitions.
%
\begin{defn}[Homogeneity, translation invariance]
\begin{enumerate}[(i)]
\item We call a function $f\colon \R^d\to\R$ positively homogeneous of degree $b\in\R$ if $f(cx)=c^bf(x)$ for all $c>0$ and for all $x\in \R^d$.
\item A scalar risk measure $\r\colon\Y\to\R$ is called positively homogeneous if $\r(cX)=c\r(X)$ for all $c>0$ and for all $X\in\Y$.
\item An exhaustive scoring function $S\colon \F(\R^d;\R^d_+)\times \R^d\to\R$ is said to have positively homogeneous score differences of degree $b\in\R$ if $S(cA,cy)-S(cB,cy)=c^b(S(A,y)-S(B,y))$ for all $A,B\in \F(\R^d;\R^d_+)$, $y\in \R^d$ and $c>0$.
\item A selective identification function $V:\R^d\times\R^d\to\R$ is called translation invariant if $V(k+l,y-l)=V(k,y)$ for all $k,l,y\in\R^d$.
\item An exhaustive scoring function $S\colon \F(\R^d,\R^d_+)\to\R^d$ is said to have translation invariant score differences if $S(A+l,y-l)-S(B+l,y-l)=S(A,y)-S(B,y)$ for all $A,B\in \F(\R^d,\R^d_+)$ and $y,l\in\R^d$.
\item
A measure $\pi$ on $\mathcal{B}(\R^d)$ is translation invariant if $\pi (A) = \pi(A+l)$ for all $A\in \mathcal{B}(\R^d)$ and for all $l\in\R^d$.
\item
A measure $\pi$ on $\mathcal{B}(\R^d)$ is positively homogeneous of degree $b\in\R$ if $\pi(cA) = c^b\pi(A)$ for all $A\in \mathcal{B}(\R^d)$ and for all $c>0$.
\end{enumerate}
\end{defn}
\noindent
With these definitions in mind, we can now state the following results.

\begin{lem}\label{lem:R pos hom}
If $\r$ is a positively homogeneous scalar risk measure and $\Lambda$ is positively homogeneous of any degree $b\in\R$, $R$ as defined at \eqref{eq:R} is positively homogeneous, i.e.\ for all $c>0$ and $Y\in\Y^d$, $R(cY)=cR(Y)$.
\end{lem}

\begin{lem}\label{lem:ident}
Assume that $\rho\colon\M\to\R$ admits a strict $\M$-identification function $V_\r\colon\R\times\R\to\R$. Then the following holds for $V_{R_0}\colon \R^d\times\R^d\to\R$ defined at \eqref{eq:def V_R_0}:
\begin{enumerate}[\rm (i)]
\item 
$V_{R_0}$ is translation invariant.
\item
Assume that $\M^d$ is convex and that for any $x\in\R^d$ there are $F_1,F_2\in\M^d$ such that $\bar V_{R_0}(x,F_1)>0$ and $\bar V_{R_0}(x,F_2)<0$. Then for any translation invariant strict $\M^d$-identification function $V'_{R_0}$ for $R_0$ there is some $\lambda \neq0$ such that 
\[
\bar V'_{R_0}(x,F) = \lambda \bar V_{R_0}(x,F)
\]
for all $x\in\R^d$ and for all $F\in\M^d$.
\item 
If $V_\r(0,\cdot)\colon\R\to\R$ is positively homogeneous of degree $a\in\R$ 
and $\Lambda$ is positively homogeneous of degree $b\in\R$, then $V_{R_0}$  is positively homogeneous of degree $ab$.
\end{enumerate}
\end{lem}

\begin{rem}\label{rem:translation invariance}
Interestingly, part (i) of Lemma \ref{lem:ident} implies that if $\r\colon\M\to\R$ is identifiable and cash-invariant with a strict $\M$-identification function $V_\r\colon\R\times\R\to\R$, then $V_\r^{inv}(x,y):= V_\r(0,x+y)$ is translation invariant. This result can also be generalised to translation equivariant functionals, hence establishing the converse of \cite[Proposition 4.7(i)]{FisslerZiegel2019}.
\end{rem}

\begin{prop}\label{prop:translation invariance}
Let Assumption \eqref{ass:closedness} hold and assume that $\rho$ is identifiable with an oriented strict $\M$-identification function $V_\r$. 
\begin{enumerate}[(i)]
\item
Let $\L^d$ be the $d$-dimensional Lebesgue measure and $S_{R,k}$ be an elementary score of the form at \eqref{eq:S_{R,k}} with identification function $V_{R_0}(k,y) = V_\r(0,\Lambda(y+k))$. Then the scoring function
\be{eq:translation inv}
S_{R, \L^d}(A,y) =\int_{\R^d} S_{R,k}(A,y)\,\L^d(\diff k), \qquad A\in \F(\R^d;\R^d_+), \ y\in\R^d
\ee
is translation invariant and $\M^d$-consistent for $R$.
\item
Any finite $\M^d$-consistent scoring function $S$ for $R$ of the form at \eqref{eq:S_R,pi} is translation invariant only if $S(A,y) = \gamma S_{R, \L^d}(A,y)$ at \eqref{eq:translation inv} for some $\gamma\ge0$.
\end{enumerate}
\end{prop}

%

Note that for all examples of $\r$ and $\Lambda$ we are aware of, it holds that for the score at \eqref{eq:translation inv} $S_{R, \L^d}(A,y)$ is finite if the symmetric difference $A\triangle R(y)$ is bounded and only if it has a finite Lebesgue measure. Hence, Proposition \ref{prop:translation invariance}(ii) implies that the only finite translation invariant consistent score is the 0-score.

\begin{prop}\label{prop: positive homogeneity}
Let Assumption \eqref{ass:closedness} hold and suppose that $V_{R_0}$ is an oriented strict selective $\M^d$-identification function for $R_0$ which is positively homogeneous of degree $a\in\R$. 
\begin{enumerate}[(i)]
\item
Let $\pi$ be a non-negative $\sigma$-finite positively homogeneous measure of degree $b\in\R$ and $S_{R,k}$ be an elementary score of the form at \eqref{eq:S_{R,k}} with identification function $V_{R_0}$. Then the scoring function
\be{eq:homogeneous}
S_{R, \pi}(A,y) =\int_{\R^d} S_{R,k}(A,y)\,\pi(\diff k), \qquad A\in \F(\R^d;\R^d_+), \ y\in\R^d
\ee
is positively homogeneous of degree $a+b$.
\item
Any finite $\M^d$-consistent scoring function $S$ for $R$ of the form at \eqref{eq:S_R,pi} is positively homogeneous of degree $a+b$ only if $S(A,y) = \gamma S_{R, \pi}(A,y)$ at \eqref{eq:homogeneous} for some $\gamma\ge0$ and for some non-negative $\sigma$-finite positively homogeneous measure $\pi$ of degree $b\in\R$.
\end{enumerate}
\end{prop}

\begin{rem}
For many measures $\pi$ and sets $A$ the score $S_{R,\pi}(A,y)$ defined at \eqref{eq:S_R,pi} might not be finite which diminishes the practical statistical applicability in the context of forecast comparison. More to the point, score differences involving $S_{R,\pi}(A,y)$ will not be finite or might even not be defined at all. To overcome this issue we suggest to work with the following convention of score differences
\begin{align}\label{eq: score difference}
S_{R,\pi}(A,y) - S_{R,\pi}(B,y) &:= \int_{\R^d} S_{R,k}(A,y) - S_{R,k}(B,y)\pi(\diff k)\\ \nonumber
&= \int_{B\setminus A} V_{R_0}(k,y)\pi(\diff k) - \int_{A\setminus B} V_{R_0}(k,y)\pi(\diff k),
\end{align}
where $S_{R,k}$ are the elementary scores defined at \eqref{eq:S_{R,k}}, which assume finite values only. Indeed, the integral at \eqref{eq: score difference} might exist and might even be finite, even if $S_{R,\pi}(A,y)$ or $S_{R,\pi}(B,y)$ are $\infty$. This can be particularly helpful when working with translation invariant or positively homogeneous scores.
\end{rem}

\section{Elicitability of systemic risk measures based on Expected Shortfall}\label{sec:ES}

The two most common scalar risk measures in quantitative risk management are Value at Risk ($\VaR_\a$) and Expected Shortfall ($\ES_\a$) at some level $\a\in(0,1)$. Both are law-invariant scalar risk measures such that we can define them directly as functionals on appropriate classes of distributions. For a probability distribution function $F$ and $\a\in(0,1)$ we define
\begin{align}\label{eq:VaR}
\VaR_\a(F) &= - \inf\{x\in\R\,|\,\a\le F(x)\},\\ \nonumber
\ES_\a(F) &= \frac{1}{\a}\int_0^{\a} \VaR_{\beta}(F) \dint \beta \\ \nonumber
&= -\frac{1}{\a}\int_{(-\infty, - \VaR_\a(F)]} x\dint F(x) - \frac{1}{\a}\VaR_\a(F) \big( F(-\VaR_\a(F)) - \a\big).
\end{align}
The last decade has seen quite a lively debate about which scalar risk measure is best to use in practice where the debate has mainly focused on the dichotomy of $\VaR_\a$ and $\ES_\a$; see \cite{EmbrechtsETAL2014} and \cite{EmmerKratzTasche2015} for a comprehensive academic discussion and \cite{BIS2014} for a regulatory perspective. $\VaR_\a$ is robust in the sense of \citet{Hampel1971}, but ignores losses beyond the level $\a$. Moreover, \citet{ContDeguestETAL2010} showed that robustness and coherence are mutually exclusive implying that $\VaR_\a$ fails to be coherent. On the other hand, $\ES_\a$ is a coherent---thus non-robust---risk measure. As a tail expectation, it takes into account the losses beyond the level $\a$ by definition. 
Another layer of the joust between the two risk measures is their backtestability \citep{AcerbiSzekely2014, AcerbiSzekely2017}. While the identifiability of a risk measure is important, but not necessary for traditional backtesting, comparative backtesting relies on the elicitability of the risk measure at hand; see \cite{FisslerETAL2016} and \cite{NoldeZiegel2017}. \\
As the negative of a selection of the $\a$-quantile, $\VaR_\a$ is elicitable on any class of distributions with a unique $\a$-quantile. In stark contrast, \citet{Gneiting2011} demonstrated that $\ES_\a$ does generally not satisfy the CxLS property which rules out its elicitability; cf.\ \cite{Weber2006}. 

Recall that Theorem \ref{thm:identification R_0} and Theorem \ref{thm:elicitability} establish identifiability and elicitability results for systemic risk measures based on a scalar risk measure $\rho$ which is identifiable, and therefore---under weak regularity assumption---elicitable; see \cite{SteinwartPasinETAL2014}. Moreover, Proposition \ref{prop:R0 to rho} establishes that, under weak regularity conditions, the identifiability\,/\,elicitability of $\rho$ is also necessary for the identifiability and elicitability of the systemic risk measure based on $\rho$. 
Therefore, for some aggregation function $\Lambda\colon\R^d\to\R$, the systemic risk measure $R^{\ES_\a}(Y) = \{k\in\R^d\,|\,\ES_\a(\Lambda(Y+k))\le0\}$, $Y\in\Y^d$, generally fails to be elicitable. On the other hand, for scalar risk measures, \citet{FisslerZiegel2016} established that the pair $(\VaR_\a, \ES_\a)$ is elicitable under weak regularity conditions; cf.\ \cite{AcerbiSzekely2014}. This might trigger the suspicion that the pair $\big(R^{\VaR_\a}, R^{\ES_\a}\big)$ mapping to the product space $\F(\R^d; \R^d_+)\times \F(\R^d; \R^d_+)$ is exhaustively elicitable. We conjecture, however, that $\big(R^{\VaR_\a}, R^{\ES_\a}\big)$, in general, fails to have the exhaustive CxLS property for $d\ge2$, ruling out its exhaustive elicitability. 

Therefore, we need slightly more information than $R^{\VaR_\a}(Y) = \{k\in\R^d\,|\,\VaR_\a(\Lambda(Y+k))\le0\}$ in the other component to render the pair involving $R^{\ES_\a}$ elicitable. Note that $R^{\VaR_\a}(Y)$ only encodes information about the sign of $\VaR_\a(\Lambda(Y+k))$ for each $k\in\R^d$. Apart from $k$ in the boundary of $R^{\VaR_\a}(Y)$ we know nothing about the actual size of $\VaR_\a(\Lambda(Y+k))$. However, the positive result about the elicitability of the pair $(\VaR_\a, \ES_\a)$ actually exploits the fact that for the scoring function $S_\a(x,y) = -(\one\{y\le - x\} - \a)x/\a - \one\{y\le -x\}y/\a$, $x,y\in\R$, $\VaR_\a(F)$ is the \emph{minimiser} of the expected score while $\ES_\a(F)$ is its \emph{minimum}; see \cite{FrongilloKash2015}. Therefore, we shall consider the function-valued functional $T^{\VaR_\a}\colon \Y^d\to \R^{\R^d}$ where for each $Y\in\Y^d$
\be{eq:T VaR}
T^{\VaR_\a}(Y)\colon \R^d\to \R, \quad \R^d\ni k \mapsto T^{\VaR_\a}(Y)(k)=\VaR_\a(\Lambda(Y+k)).
\ee

\subsection{Identifiability results}

To simplify the exposition of the results, we shall make the following assumption about the class $\M$.

\begin{ass}\label{ass:regularity}
All distribution functions in $\M$ are continuous and strictly increasing. 
\end{ass}
\noindent
Note that this assumption imposes also implicit restrictions on the class $\M^d$ since we assume that for any $Y$ with distribution in $\M^d$, the random variable $\Lambda(Y+k)$ has a distribution in $\M$ for any $k\in\R^d$.

A strict $\M$-identification function $V\colon \R^2\times\R\to\R^2$ for the pair $(\VaR_\a,\ES_\a): \M\to \R^2$ is given in terms of 
\[
V(v,e,y) = 
\begin{pmatrix}
\a-\one\{y + v\le 0\} \\
e +\one\{y + v\le 0\}y/\a+ (\one\{y+ v\le 0 \} - \a)v/\a
\end{pmatrix},
\]
$(v,e)\in\R^2$, $y\in\R$,
which can be verified by a straightforward calculation.
This induces a (non-strict) selective $\M^d$-identification function $U\colon \R^{\R^d}\times\R^d\times \R^d\to\R^2$ for $(T^{\VaR_\a}, R^{\ES_\a}_0)\colon \M^d\to \R^{\R^d}\times 2^{\R^d}$. For $v\colon\R^d\to\R$, $k\in\R^d$ and $y\in\R^d$ it is defined by
\begin{multline}\label{eq:U a}
U(v,k,y) =\\
\begin{pmatrix}
\a-\one\{\Lambda(y+k) \le -v(k)\} \\[0.2em]
\one\{\Lambda(y+k)\le -v(k)\}\Lambda(y+k)/\a+\big(\one\{\Lambda(y+k)\le -v(k)\} - \a\big)v(k)/ \a
\end{pmatrix}.
\end{multline}

\begin{prop}\label{prop:ES ident}
For any $F\in\M^d$, the component $U_2$ of $U$ defined at \eqref{eq:U a} is oriented in the sense that for any $k\in\R^d$
\be{eq:U orientation}
\bar U_2(T^{\VaR_\a}(F),k,F)
\begin{cases}
<0, & \text{if } k\notin R^{\ES_\a}(F)\\
=0, & \text{if } k\in R^{\ES_\a}_0(F)\\
>0, & \text{if } k\in R^{\ES_\a}(F)\setminus R^{\ES_\a}_0(F).
\end{cases}
\ee
Under Assumption \eqref{ass:regularity} the map $U$ is a selective $\M^d$-identification function for the functional $(T^{\VaR_\a}, R^{\ES_\a}_0)\colon \M^d\to \R^{\R^d}\times 2^{\R^d}$. 
\end{prop}

\subsection{Elicitability results}

We introduce the following regularity assumption on $T^{\VaR_\a}$ defined at \eqref{eq:T VaR}.

\begin{ass}\label{ass:T VaR}
The functional $T^{\VaR_\a}\colon\M^d\to \R^{\R^d}$ takes only values in $\mathcal C(\R^d;\R)$, the space of continuous functions from $\R^d$ to $\R$.
\end{ass}
With a standard argument one can verify that Assumption \eqref{ass:regularity} together with the continuity of $\Lambda$ imply Assumption \eqref{ass:T VaR}.\\
In order to present the following theorem more compactly, let us introduce $S_{\a,g}(x,y) = (\one\{y\le x\} - \a)(g(x) - g(y))$ for any increasing function $g\colon\R\to\R$. Recall that $S_{\a,g}$ is a non-negative consistent selective scoring function for the $\a$-quantile. Moreover, if $g$ is strictly increasing, $S_g$ is a strictly consistent selective scoring function for the $\a$-quantile relative to any class $\M$ of distributions such that $g$ is $\M$-integrable; see \cite{Gneiting2011b}.

\begin{thm}\label{thm:ES elicitability}
\begin{enumerate}[\rm (i)]
\item
Under Assumption \eqref{ass:measurability}, for every $k\in\R^d$ the function
$S_k\colon \R^{\R^d}\times \widehat \P(\R^d;\R^d_+) \times \R^d\to[0,\infty)$,
\be{eq:S VaR ES elementary}
S_k(v,A,y) = -\one_A(k)U_2(v,k,y) - \one_{R^{\ES_\a}(y)}(k)\Lambda(y+k)
\ee
is a non-negative $\M^d$-consistent exhaustive scoring function for the functional $(T^{\VaR_\a}, R^{\ES_\a})\colon\M^d\to \R^{\R^d} \times \widehat \P(\R^d;\R^d_+)$.
\item
Under Assumption \eqref{ass:measurability} and if $\pi_1, \pi_2$ are $\sigma$-finite non-negative measures on $\mathcal B(\R^d)$, the map $S_{\pi_1,\pi_2} \colon \R^{\R^d}\times \widehat \P(\R^d;\R^d_+) \times \R^d\to[0,\infty]$,
\be{eq:S VaR ES}
S_{\pi_1,\pi_2}(v,A,y) = \int_{\R^d} S_{\a,g_k}(-v(k), \Lambda(y+k))\,\pi_1(\diff k) +\int_{\R^d} S_k(v,A,y)\,\pi_2(\diff k),
\ee
where for each $k\in\R^d$ the function $g_k\colon\R\to\R$ is non-decreasing and $S_k$ is given at \eqref{eq:S VaR ES elementary}, is a non-negative $\M^d$-consistent exhaustive scoring function for $(T^{\VaR_\a}, R^{\ES_\a})\colon\M^d\to \R^{\R^d} \times \widehat \P(\R^d;\R^d_+)$.
\item
If Assumptions \eqref{ass:closedness}, \eqref{ass:regularity}, and \eqref{ass:T VaR} hold, if $g_k$ is strictly increasing for all $k\in\R^d$ and if $\pi_1$, $\pi_2$ are strictly positive, then the restriction of $S_{\pi_1,\pi_2}$ defined at \eqref{eq:S VaR ES} to $\mathcal C(\R^d;\R)\times \F(\R^d;\R^d_+)\times \R^d$ is a non-negative strictly $\M_0^d$-consistent exhaustive scoring function for $(T^{\VaR_\a}, R^{\ES_\a})\colon\M_0^d\to \mathcal C(\R^d;\R)\times \F(\R^d;\R^d_+)$, where $\M_0^d\subseteq \M^d$ is such that the inequality $\bar S_{\pi_1,\pi_2}(T^{\VaR_\a}(F), R^{\ES_\a}(F),F)<\infty$ holds for all $F\in\M^d_0$.
\end{enumerate}
\end{thm}

Theorem \ref{thm:ES elicitability}(ii) suggests that there is again the possibility to consider Murphy diagrams to assess the quality of forecasts for $(T^{\VaR_\a}, R^{\ES_\a})$ simultaneously over all scoring functions given at \eqref{eq:S VaR ES}. However, a direct implementation would amount to defining them on the $2d$-dimensional Euclidean space. If one further decomposes the functions $g_k$ in the spirit of \cite{EhmETAL2016}, one would even end up with a map defined on $\R\times\R^d\times\R^d$. However, arguing along the lines of \cite{ZiegelETAL2019}, the measure $\pi_1$ only accounts for forecast accuracy in the Value at Risk component. Therefore, if interest focuses on the Expected Shortfall component, it makes sense to set $\pi_1=0$ to facilitate the analysis. This implies that one can consider the Murphy diagram
\[
\R^d\ni k \mapsto \E[S_k(v,A,Y)]
\]
with the elementary scores $S_k$ given at \eqref{eq:S VaR ES elementary}. The empirical formulation in the spirit of \eqref{eq:Murphy emp} is straight forward.

\section{Examples and simulations}\label{sec:simulations}
\subsection{Consistency of the exhaustive scoring function for $R$}
\label{subsec:simulation1}
In this subsection, we shall demonstrate the discrimination ability of the consistent exhaustive scoring functions constructed in Theorem \ref{thm:elicitability} via a simulation study. We shall do so in the context of the prediction space setting introduced in \cite{GneitingRanjan2013}. That means we explicitly model the information sets of each forecaster. For the sake of simplicity and following \cite{GneitingETAL2007} and \cite{FisslerZiegel2019_RVaR} we choose to consider prediction-observation-sequences that are independent and identically distributed over time. 
Despite this simplification, there is still a variety of parameters to consider in the simulation study:
\begin{enumerate}[(i)]
\item the dimension of the financial system $d$,
\item the (unconditional) distribution of $Y_t$,
\item the aggregation function $\Lambda$;
\item the scalar risk measure $\r$;
\item
the competing forecasts $A_t$ and $B_t$, along with their joint distributions with $Y_t$;
\item the measure $\pi$ (and thus the scoring function $S_{R,\pi}$);
\item the time horizon $N$.
\end{enumerate}
We confine ourselves to the following choices of these parameters.
\begin{enumerate}
\item[(i)--(iii)] 
We work with two different combinations of $Y_t$ and $\Lambda$. In both cases, we work with a system with $d=5$ participants. 
\begin{enumerate}[(a)]
\item The vector $Y_t$ models the gains and losses of the participants in the system. At any time point $t$, $Y_t=\mu_t+\epsilon_t$ where $\mu_t$ follows a $5$-dimensional normal distribution with mean 0, correlations $0.5$ and variances $1$, and $\epsilon_t$ follows a $5$-dimensional standard normal distribution. Moreover, $\mu_t$ and $\epsilon_t$ are independent for all $t$.
Thus, conditionally on $\mu_t$, $Y_t$ has distribution $\mathcal N_5(\mu_t,I_5)$, whereas unconditionally, $Y_t\sim \mathcal N_5(0,\Sigma)$ with $(\Sigma)_{ij}=0.5$ for $i, j=1,\ldots,5$, $i\neq j$ and 2 otherwise. The aggregation function $\Lambda_1$ is of the form $\Lambda_1(Y_t)=(1-\beta)\sum_{i=1}^dY_{i,t}^+-\beta\sum_{i=1}^dY_{i,t}^-$, as suggested in \cite{Amini2015}, and we set $\beta=0.75$. This way, both gains and losses influence the value of the aggregation function, however, the losses have a higher weight. Here and in what follows, $x^+$ and $x^-$ denote the positive and negative parts of $x$, such that $x^+=\max(0,x)$ and $x^-=-\min(0,x)$.
\item We consider an extended model of \citet{EisenbergNoe}; see \cite{FeinsteinRudloffWeber2017}: The participants have liabilities towards each other, $L_{ij,t}$ represents the nominal liability of participant $i$ towards participant $j$ at time point $t$, $i,j=1, \ldots, 5$. Moreover, each participant $i$ owes an amount $L_{is,t}$ to society at time point $t$. To simplify the simulations and shorten the computing time, we assume that the liabilities matrix is deterministic and constant in time, so that we can write $L_{is}$ instead of $L_{is,t}$. Moreover, we denote by $\bar{L}_s$ the sum of all payments promised to society, i.e., $\bar{L}_s=\sum_{i=1}^d L_{is}$. The vector $Y_t$ represents the endowments of the participants at time point $t$. As suggested in \cite{EisenbergNoe}, if some of the endowments are negative, we introduce a so called sink node and interpret the negative endowments as liabilities towards this node. The value of the aggregation function $\Lambda_2$ corresponds to the sum of all payments society obtains in the clearing process as described in \cite{EisenbergNoe}. To simulate the endowments of the participants $Y_t$, we assume that $Y_{it}=(\mu_{it}+\epsilon_{it})^2$ for $i=1,\ldots,5$ with $\mu_t$ and $\epsilon_t$ specified in (a). We construct the system in the following way:
\begin{itemize}
\item The probability of a participant owing to another participant is 0.8. If there is a liability from $i$ to $j$, its nominal value is 2.
\item In addition, each participant owes 2 to the society.
\end{itemize}
\end{enumerate}
\item[(iv)] In setting (a), we consider the scalar risk measures $\VaR_{\alpha}$, $\a\in(0,1)$, defined at \eqref{eq:VaR}, and its expectile-based version defined as $\EVaR_{\tau}(X)=-e_{\tau}(X)$, $\tau\in(0,1)$, where $e_{\tau}$ satisfies the equation $\tau\E[(X-e_{\tau})^+]=(1-\tau)\E[(X-e_{\tau})^-]$ \citep{NeweyPowell1987}. For the interpretation of expectile-based risk measures in finance we refer to \cite{BelliniDiBernardino2015} and to \cite{EhmETAL2016} for a novel economic angle on expectiles. 
In case (b), however, the aggregation function $\Lambda_2$ takes nonnegative values only and therefore any financial system would be deemed acceptable when working with $\r = \VaR_{\alpha}$ or $\r =\EVaR_{\tau}$. 
Following \cite{FeinsteinRudloffWeber2017} we overcome this issue by considering the shifted risk measure $\tilde{\r}(X) = \r(X) + 0.9\bar{L}_s$, where $\r = \VaR_\a$ or $\r = \EVaR_\tau$, thus considering the system acceptable if $\VaR_\alpha$ or $\EVaR_\tau$ of the amount that society obtains from the nodes is at most $-0.9\bar{L}_s$.
Using the standard identification functions for $\VaR_{\alpha}$ and $\EVaR_{\tau}$ \citep{Gneiting2011}, the selective identification functions for $R_0$ are the following:
\begin{itemize}
\item for $\r(X)=\VaR_{\alpha}(X)+a$: 
\be{eq:identVaR}
V_{R_0}(k,y)=\alpha-\one\{\Lambda(k+y)-a\leq0\};
\ee
\item for $\r(X)=\EVaR_{\tau}(X)+a$: 
\be{eq:identEVaR}
V_{R_0}(k,y)=\tau(\Lambda(k+y)-a)^+-(1-\tau)(\Lambda(k+y)-a)^-.
\ee
\end{itemize}
\item[(v)] We consider two ideal forecasters with different information sets: Anne has access to $\mu_t$ and uses the correct conditional distribution of $Y_t$ given $\mu_t$ for her predictions. That is, she issues $A_t = R(\mathcal N_5(\mu_t,I_5)) = R(\mathcal N_5(0_5,I_5)) - \mu_t$ in case (a) and $A_t=R(\mathcal N_5(\mu_t,I_5)^2)$ in case (b) 
 for each $t=1, \ldots, N$. Here, we use the notation $\mathcal N_d(m,\Sigma)^2$ for the distribution of a random variable $Y=X^2$ where $X\sim\mathcal N_d(m,\Sigma)$.  Bob is uninformed and issues the climatological forecast. That is, he uses the correct unconditional distribution of $Y_t$ for his forecasts. Therefore, he constantly predicts $B_t = R(\mathcal N(0_5,\Sigma))$ in case (a) and $B_t=R(\mathcal N(0_5,\Sigma)^2)$ in case (b). 
\item[(vi)] 
We choose $\pi$ to be a 5-dimensional Gaussian measure with mean $m\in\R^5$ and covariance $I_5$.
To enhance the discrimination ability of the score $S_{R,\pi}$, we aim at choosing $m$ close to the boundary of $R(Y_t)$. Here we work with $m=2\cdot\mathbf{1}$ as this value appears to be fairly close to the (deterministic) forecasts of Bob in all four cases. 
This choice of $\pi$ turns out to be beneficial with respect to the integrability considerations and renders our scores finite. Indeed, since $V_{R_0}$ for $\r=\VaR_\alpha+a$ is bounded, it is $\pi\otimes F$-integrable for any finite measure $\pi$. In the case of $\r=\EVaR_\tau+a$, more considerations are necessary. From the construction of $\Lambda_2$ it is clear that it is a bounded function, in particular, the values lie in the interval $\left[0,\sum_{i=1}^d L_{is}\right]$. This in turn implies that the identification function $V_{R_0}$ is bounded. 
Therefore $V_{R_0}$ is $\pi\otimes F$-integrable for any finite measure $\pi$.
Finally, since $\Lambda_1$ only grows linearly and both $\pi$ and $Y_t$ are Gaussian, the integrability is also guaranteed in this case.
\item[(vii)] We work with sample sizes $N=250$, being a good proxy for the number of working (and trading) days in a year.
\end{enumerate}
To compare Anne's with Bob's forecast performance, we employ the classical Diebold-Mariano test \citep{DieboldMariano1995} based on the scoring functions $S_{R,\pi}$ of the form at \eqref{eq:S_R,pi}
arising from our choice of $\pi$ and identification functions introduced in \eqref{eq:identVaR} and \eqref{eq:identEVaR}. We repeat the experiment $1\,000$ times for setting (a) and $100$ times for setting (b), since due to the presence of clearing, the computation time tends to be quite lengthy in setting (b). We approximate $\pi$ with a Monte Carlo draw of size $100\,000$. The computations are performed with the statistics software \texttt{R}, and in particular its \texttt{Rcpp} package to also integrate parts of \texttt{C++} code to enhance the computational speed.

We consider tests with two different one-sided null hypotheses. The null hypothesis $H_0\colon\E\left[S_{R,\pi}(A_1,Y_1)\right]\geq \E\left[S_{R,\pi}(B_1,Y_1)\right]$, or in short $H_0\colon A\succeq B$, means that Bob has a better forecast performance than Anne, evaluated in terms of $S_{R,\pi}$. On the contrary, $H_0\colon A\preceq B$ stands for $H_0\colon\E\left[S_{R,\pi}(A_1,Y_1)\right]\leq \E\left[S_{R,\pi}(B_1,Y_1)\right]$ asserting that Anne's forecasts are superior to Bob's in terms of $S_{R,\pi}$.
In Table \ref{tab:results} we report the relative frequencies of the rejections for the respective null hypotheses. Invoking the sensitivity of consistent scoring functions with respect to increasing information sets established in \cite{HolzmannEulert2014}, we expect that Anne's forecasts are deemed superior to Bob's predictions. And in fact, the null $A\preceq B$ is never rejected for either scenario, while $A\succeq B$ is rejected in between 74\% and 100\% of all experiments over the various scenarios. 
In particular, with rejection rates for $H_0\colon A\succeq B$ between 0.94 and 1, we observe that the discrimination ability between Bob and Anne is considerably higher for model (a) as opposed to (b) where we yield rejection rates ranging from 0.74 to 0.90. This might be due to the fact that $\Lambda_1$ is unbounded whereas $\Lambda_2$ only takes values between 0 and $\bar{L}_s$, which might translate into a smaller influence of the predictive distributions upon which the forecasts are based. Moreover, both in case (a) and (b), the number of instances when Anne's forecasts are preferred over Bob's ones is higher for $\rho=\EVaR_\a+a$ than for $\rho=\VaR_\a+a$. 
\begin{table}
\begin{center}
\begin{tabular}{c|c|cccc}
&$H_0$ & $\VaR_{0.01}$ & $\VaR_{0.05}$ & $\EVaR_{0.01}$ & $\EVaR_{0.05}$\\
\hline
\multirow{2}{*}{$\Lambda_1$}&$A\succeq B$& $0.995$ & $0.940$ & $1.000$ & $1.000$ \\
&$A\preceq B$& $0.000$ & $0.000$ & $0.000$ & $0.000$ \\
\hline
\multirow{2}{*}{$\Lambda_2$}&$A\succeq B$& $0.740$ &$0.870$ & $0.790$ & $0.900$  \\
&$A\preceq B$&  $0.000$ & $0.000$ & $0.000$ & $0.000$  \\
\hline
\end{tabular}
\caption{Ratios of rejections of the null hypotheses at significance level $0.05$.}
\label{tab:results}
\end{center}
\end{table}

\subsection{Murphy diagrams}\label{sec:Murphy_sim}

\begin{figure}
\begin{center}
\begin{tabular}{ c c }
      \includegraphics[width=6cm, height=5.5cm]{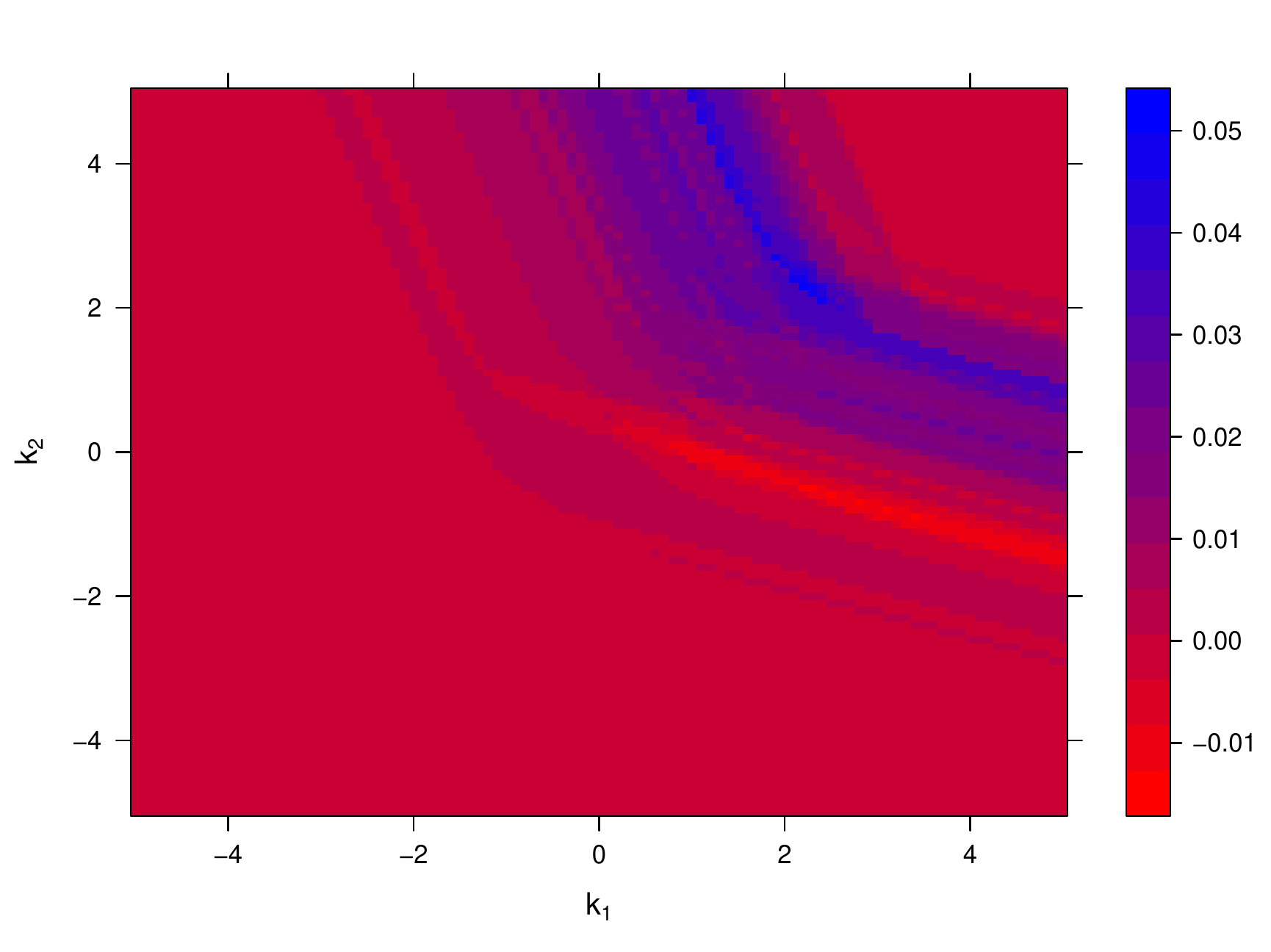} &
      \includegraphics[width=6cm, height=5.5cm]{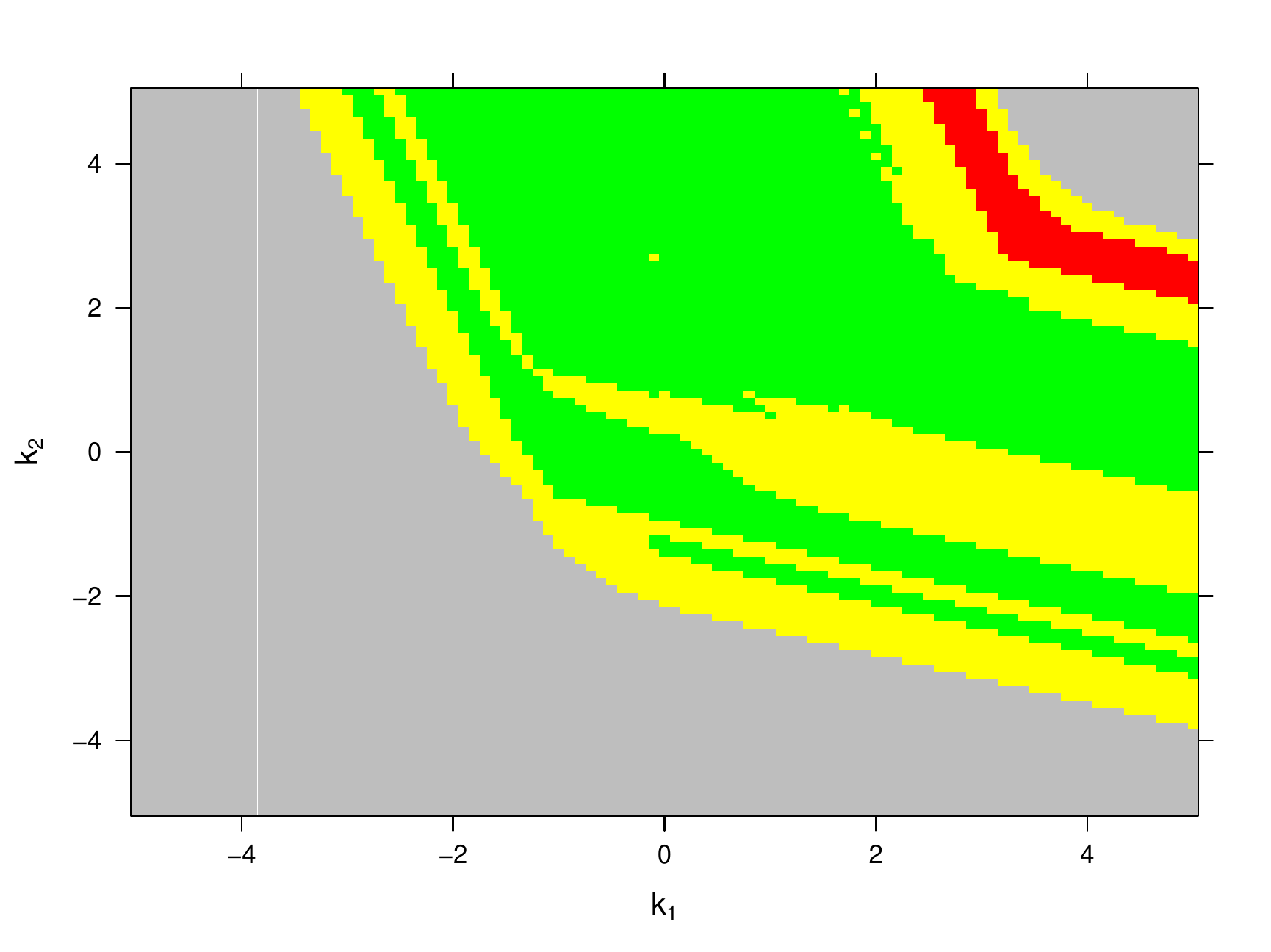} \\
			\multicolumn{2}{c}{\footnotesize $f_{1t}=B_t$, $f_{2t}=A_t$} \\
      \includegraphics[width=6cm, height=5.5cm]{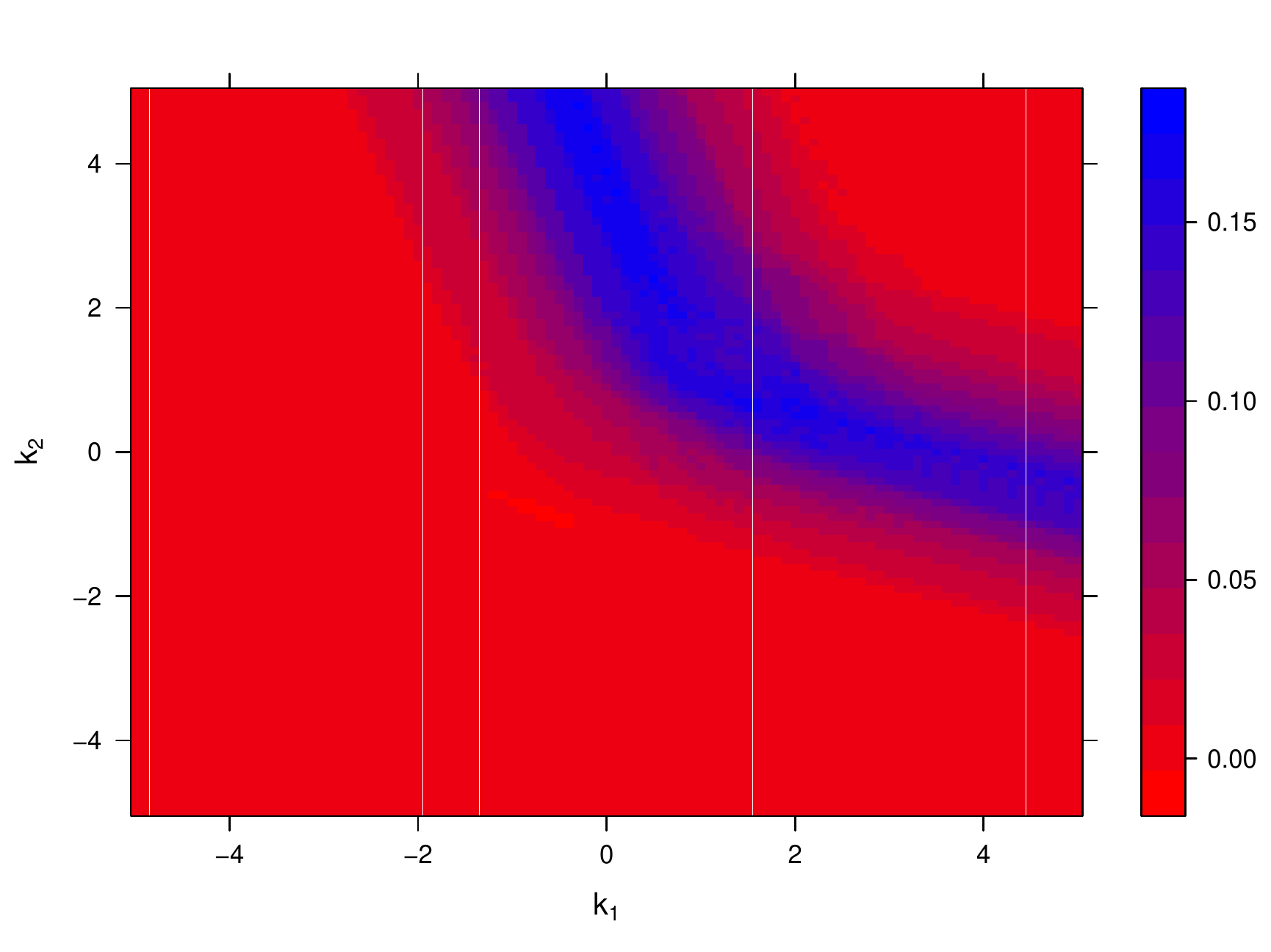} &
      \includegraphics[width=6cm, height=5.5cm]{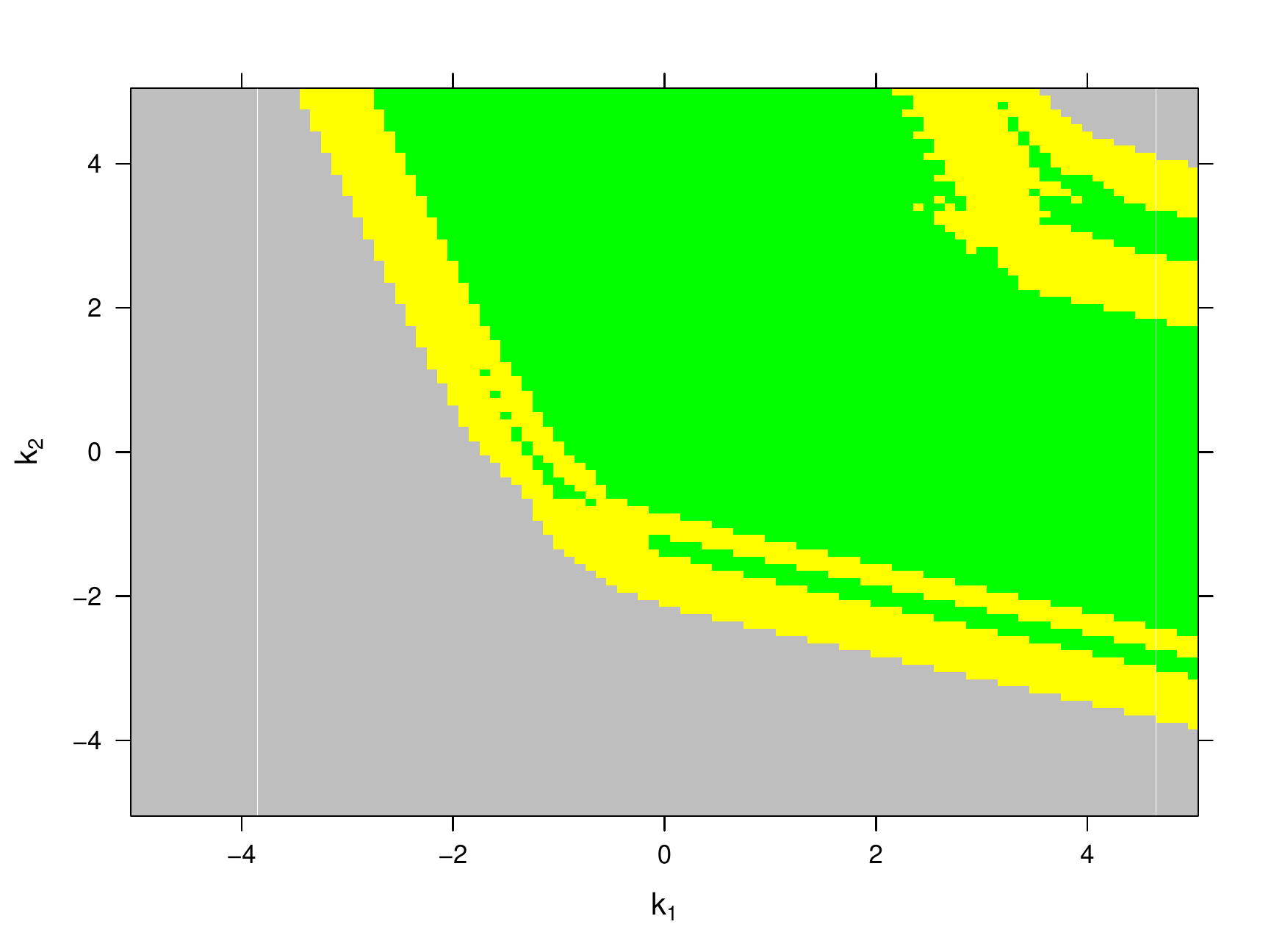} \\
\multicolumn{2}{c}{\footnotesize $f_{1t}=C_t$, $f_{2t}=A_t$}\\
      \includegraphics[width=6cm, height=5.5cm]{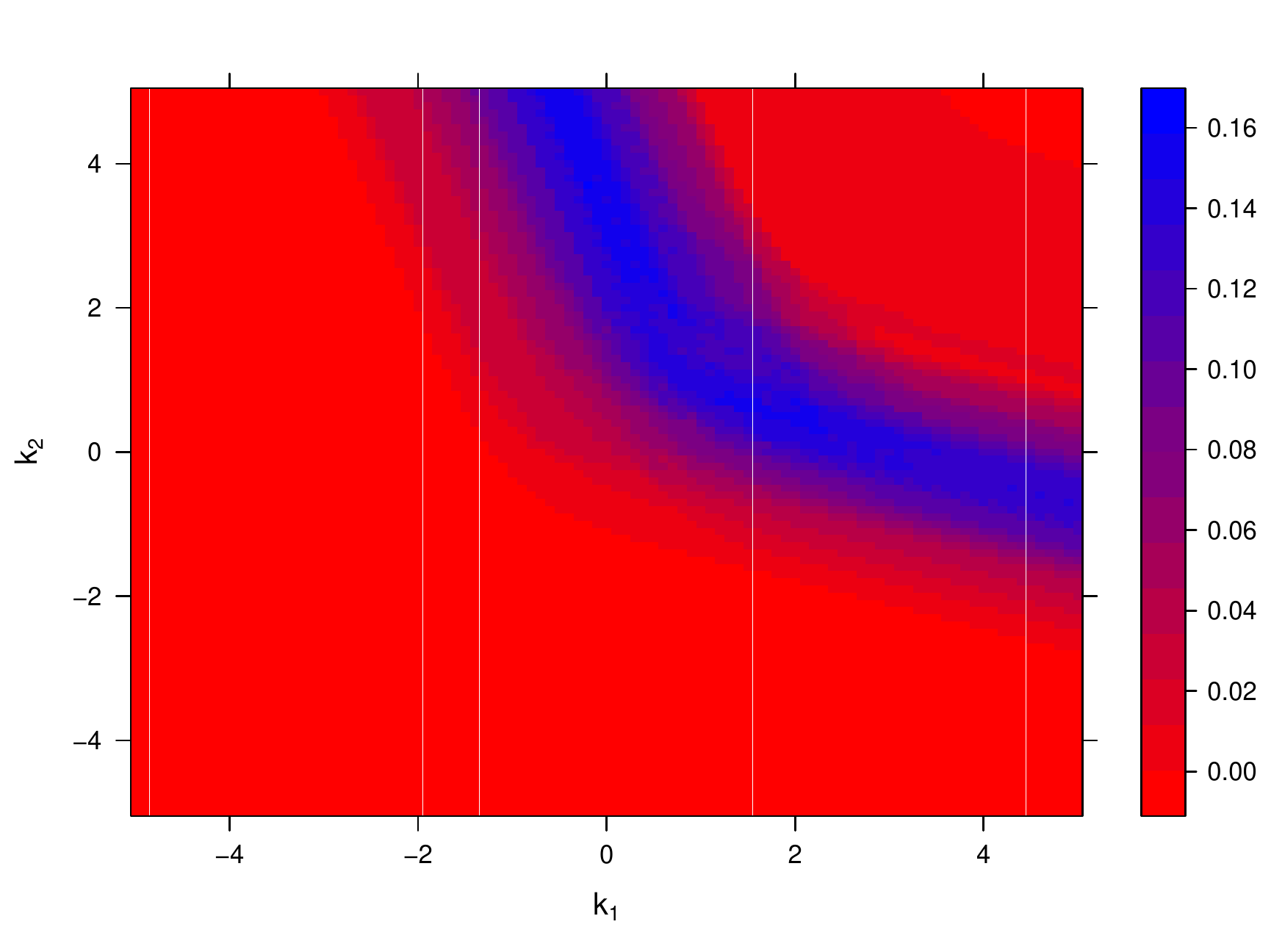} &
      \includegraphics[width=6cm, height=5.5cm]{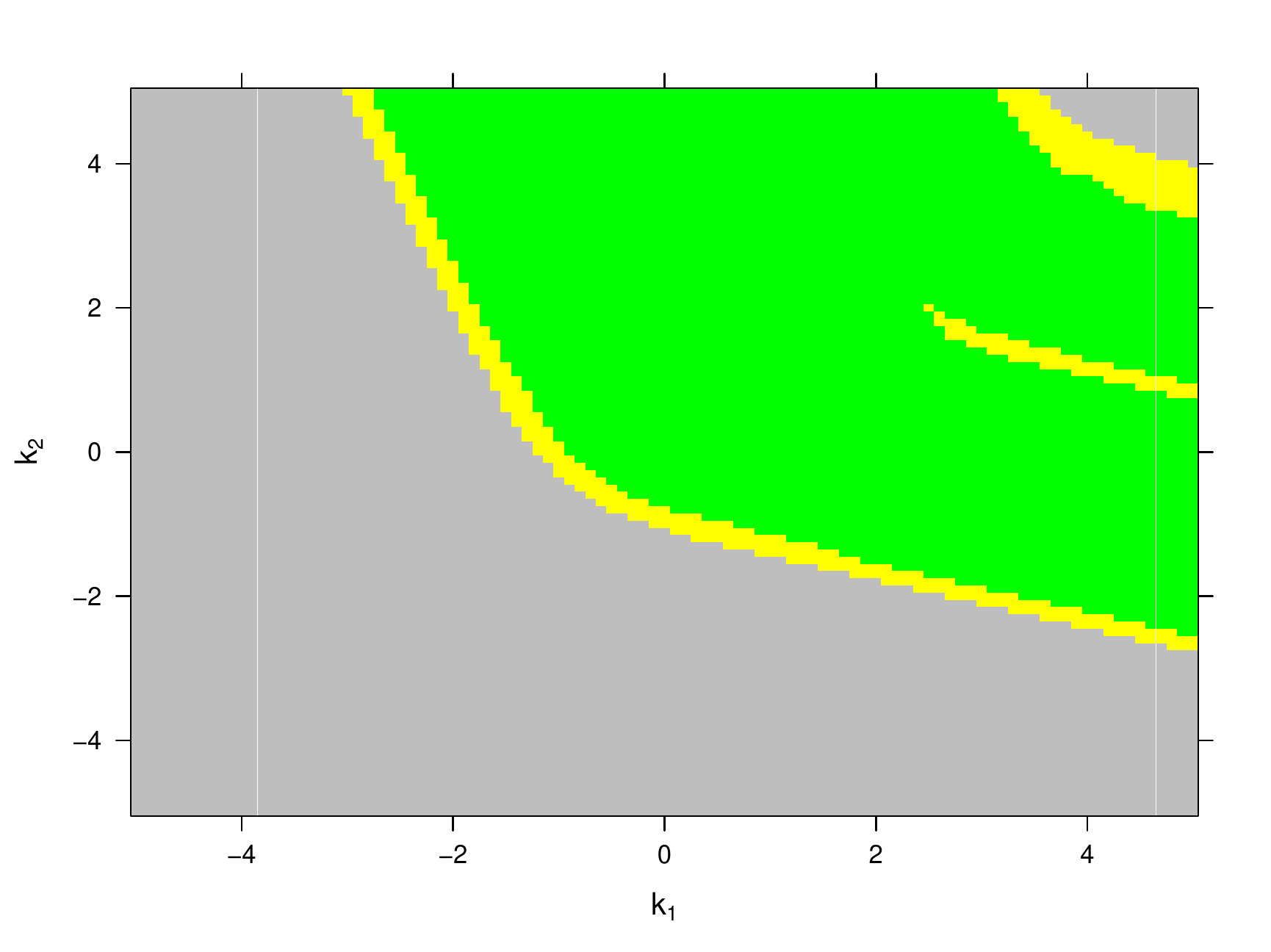} \\
			\multicolumn{2}{c}{\footnotesize $f_{1t}=C_t$, $f_{2t}=B_t$}      
    \end{tabular}
\end{center}
\caption{Left panel: Differences of empirical Murphy diagrams $\hat s_{250,f_1}(k) - \hat s_{250,f_2}(k)$ from \eqref{eq:Murphy emp} versus $k\in\R^2$. 
Right panel: Three-zone traffic light illustration of pointwise comparative backtests following \cite{FisslerETAL2016}. The green area corresponds to the region where the null $H_0^+\colon f_1 \preceq f_2$ is rejected, the red one is where $H_0^-\colon f_1 \succeq f_2$ is rejected, at level 0.05, respectively. Yellow means that neither $H_0^+$ nor $H_0^-$ are rejected. In the grey region, the two Murphy diagrams identically coincide.
}
\label{fig:Murphy}
\end{figure}

In this subsection, we illustrate the use of Murphy diagrams, following Corollary \ref{cor:Murphy}. To allow for graphical illustrations, we reduce the dimension to $d=2$, translating case (a) of subsection \ref{subsec:simulation1} to $d=2$. In particular, we have $Y_t=\mu_t+\epsilon_t$ where $\mu_t$ follows a 2-dimensional normal distribution with mean 0, variances 1 and correlations 0.5, and $\epsilon_t$ follows a 2-dimensional standard normal distribution. As the scalar risk measure $\rho$ we only consider $\VaR_{0.05}$ and we use the aggregation function $\Lambda_1\colon\R^2\to\R$, i.e., $\Lambda(x)=0.25(x_1^++x_2^+)-0.75(x_1^-+x_2^-)$. Besides focused Anne and climatological Bob introduced above both using their respective information sets ideally, we also consider Celia. Just like Anne, Celia has access to $\mu_t$ resulting in the same information set. However, she misinterprets it and issues sign-reversed forecasts $C_t$ assuming that $Y_t\sim\mathcal{N}_2(-\mu_t,I_2)$. That is, $C_t= R(\mathcal N_2(-\mu_t,I_2)) =  R(\mathcal N_2(0,I_2))+\mu_t$. Again, we consider a time horizon of $N=250$.

In the left panel of Figure \ref{fig:Murphy} we illustrate the differences of empirical Murphy diagrams  $[-5,5]^2\ni k\mapsto \hat s_{250,f_1}(k) - \hat s_{250,f_2}(k) = \frac{1}{250}\sum_{t=1}^{250} S_{R,k}(f_{1t},Y_t)-S_{R,k}(f_{2t},Y_t)$ where $f_{1t}$, $f_{2t}$ stand for one of the three considered forecasts, $A_t$, $B_t$ or $C_t$. In each pairwise comparison, we choose $f_{1t}$ to be inferior to $f_{2t}$ such that we expect a non-negative difference of the corresponding Murphy diagrams. Indeed, only in the comparison between Bob and Anne, there are some $k$ where $\hat s_{250,f_1}(k) - \hat s_{250,f_2}(k)<0$. For the remaining regions and situations, the Murphy diagrams behave consistently with our expectations. \\
For all three pairwise comparisons, one can nicely recognise the region where the respective two forecasts differ, resulting in a positive score difference depicted in blue. This bluish region seems to correspond to a blurred version of the boundary of the considered risk measure. Interestingly, while these regions illustrating positive score differences are similar in shape and location for the two pairs involving Celia, this region seems to be slightly translated to the upper right corner in the comparison between Anne and Bob. Quite intuitively, the magnitude of the score difference with a maximum of approximately 0.05 is smaller in the joust between the two ideal forecasts issued by Bob and Anne in comparison to the situations involving the sign-reversed Celia where the maximal difference between the Murphy diagrams is larger than 0.15.
We have performed this experiment several times and observed that the stylised facts are qualitatively stable. For transparency reasons, we have depicted the first experiment performed, but we report some more experiments in \cite{Supplementary}.\\
In the right panel of Figure \ref{fig:Murphy} we depict the results of pointwise comparative backtests using the traffic-light illustration suggested in \cite{FisslerETAL2016}, which is akin to the three-zone approach of the \citet[pp.\ 103--108]{BIS2013}. That is, we perform a Diebold-Mariano test using the elementary score $S_{R,k}$ for each $k$ in a grid of $[-5,5]^2$. This means, we would like to see whether the superiority of the forecasts $f_{1t}$ is recognised at a significance level of $0.05$ deploying the two possible one-sided null hypotheses $H_0^+\colon f_1 \preceq f_2$ and $H_0^-\colon f_1 \succeq f_2$, using the notation introduced in the previous subsection. If for a certain $k$ the null $H_0^+$ is rejected deeming $f_2$ significantly superior to $f_1$, we colour the corresponding $k$ in green. Similarly, if the null $H_0^-$ is rejected, considering $f_1$ to be superior to $f_2$, we illustrate $k$ in red. For all $k$ in the yellow region, none of the two nulls is rejected, meaning that the procedure is indecisive at the significance level 0.05. Finally, the grey area corresponds to those points where the score difference is constantly zero for all $t=1, \ldots, N$. Due to the vanishing variance, a Diebold-Mariano test is apparently not possible there. But clearly, this still means that the two forecasts are just equally good in that region.\\
The specific results nicely correspond to the situations obtained in the left panel of Figure \ref{fig:Murphy}. For all three pairwise comparisons and for $k$ close to the four corners of the area $[-5,5]^2$, the score differences identically vanish, resulting in a grey colouration. 
Again, in all three cases, there is a ``continuous'' behaviour in that the grey region adjoins a yellow stripe before turning into a fairly broad green stripe. For the comparisons involving Celia, clearly using an inferior predictive distribution to both Anne's and Bob's, it is reassuring that a substantial region is coloured in green. In this region, the procedure is decisive, deeming Celia significantly inferior to Anne and to Bob. Moreover, for this particular simulation, there is no red region.
The situation comparing the two ideal forecasters Anne and Bob is somewhat more involved. While most of the previous observations also apply to that situation, there is a small red stripe close to the upper right corner. For $k$ in that region and for this particular simulation, this means that Bob's forecasts outperform Anne's ones. While this observation is somewhat unexpected, it reflects the finite sample nature of the simulation, rendering such outcomes possible. Having a look at some more experiments, the results of which are again reported in \cite{Supplementary}, shows that this red region is not stable over different simulations (which would clearly violate the sensitivity of consistent scoring functions with respect to increasing information sets established in \cite{HolzmannEulert2014}), but it moves and occasionally also vanishes (on the region $[-5,5]^2$ considered). Interestingly, in all events with a red region present, this red region was still roughly located in a similar area.

\section{Discussion}\label{sec:discussion}

As mentioned in the introduction, the aim and main contribution of this paper consists of establishing selective identifiability results in Theorem \ref{thm:identification R_0} and exhaustive elicitability results in Theorem \ref{thm:elicitability} for systemic risk measures sensitive with respect to capital allocations. Notably, the construction of consistent exhaustive scoring functions relies on a mixture representation of easily computable elementary scores, which opens the way to the diagnostic tool of Murphy diagrams.

\textbf{Backtesting. }
A strictly consistent exhaustive scoring function $S_R$ for a systemic risk measure $R$ can be used for comparative backtests of competing exhaustive forecasts, that is, set-valued forecasts, as described in \cite{FisslerETAL2016} and \cite{NoldeZiegel2017}; see also Section \ref{sec:simulations}. 
More precisely, having competing forecasts $A_1, \ldots, A_N \in \P(\R^d;\R^d_+)$, $B_1, \ldots, B_N \in \P(\R^d;\R^d_+)$, and verifying observations of the gains and losses of the financial system $Y_1, \ldots, Y_N\in\R^d$, one can consider a properly normalised version of the test statistic
\[
\frac{1}{N} \sum_{t=1}^N S_R(A_t,Y_t) - S_R(B_t,Y_t)
\]
to assess which forecast sequence is superior under $S_R$. 

On the other hand, the fact that one can construct \emph{oriented } selective identification functions for risk measures might open the way to one-sided traditional backtests \cite[Subsection 2.21]{NoldeZiegel2017}. That is, if one has a sequence of vector-valued predictions for capital requirements $k_1, \ldots, k_N\in\R^d$ along with verifying observations $Y_1, \ldots, Y_N\in\R^d$, one might wonder if the forecasted capital requirements are adequate to eliminate the risk of the financial system under $R$. That means, we would like to judge if $k_t \in R(Y_t)$ for all $t = 1, \ldots, N$ with a certain level of certainty $\a$. If $V_{R_0}$ is an \emph{oriented} strict selective identification function for $R_0$, this amounts to testing the one-sided null hypothesis 
\[
H_0\colon \E[V_{R_0}(k_t,Y_t)]\le 0 \quad \text{for all }t=1, \ldots, N.
\]
Under suitable mixing conditions, one can construct an (asymptotic) level $\a$ test for this null hypothesis by considering a rescaled version of the test statistic
\[
\frac{1}{N} \sum_{t=1}^N V_{R_0}(k_t,Y_t)\,.
\]
Note that from a regulatory perspective, testing this one-sided null hypothesis is more sensible than testing the two-sided null 
\[
H'_0\colon \E[V_{R_0}(k_t,Y_t)]= 0 \quad \text{for all }t=1, \ldots, N.
\]
Indeed, this corresponds to assessing whether $\r(\Lambda(Y_t+k_t))=0$. However, overestimating the financial requirements to make the system acceptable is even more prudent from a regulatory angle.
\vspace{1em}

\textbf{M-Estimation. } 
If a systemic risk measure $R$ is exhaustively elicitable, one can make inference for it in the form of $M$-estimation \citep{HuberRonchetti2009}. That is, if one has a sample $Y_1, \ldots, Y_N\in\R^d$ of stationary observations fulfilling sufficient mixing conditions, one might estimate the set-valued risk measure $R(Y)\in\F(\R^d;\R^d_+)$, where $Y$ has the same distribution as the observations, via
\be{eq:M-estimation}
\widehat R(Y) = \argmin_{A\in\F(\R^d;\R^d_+)} \frac{1}{N} \sum_{t=1}^N S_R(A,Y_t),
\ee
where $S_R\colon \F(\R^d;\R^d_+)\times \R^d\to\R$ is a strictly consistent exhaustive scoring function for $R$. Under suitable conditions, the $M$-estimator $\widehat R(Y)$ at \eqref{eq:M-estimation} is consistent for $R(Y)$. However, computationally, the optimisation problem at \eqref{eq:M-estimation} might be rather expensive, if feasible at all. The reason is that one needs to optimise over the collection of all closed upper sets of $\R^d$. 
\vspace{1em}

\textbf{Regression. }
A closely connected concept to the notion of $M$-estimation is regression where it is possible to bypass the complication to optimise over a collection of sets. 
Consider a time series $(X_t,Y_t)_{t\in\N}$. Sticking to the usual denomination, let $Y_t$ denote the response variable, taking values in $\R^d$, and let $X_t$ be a $p$-dimensional vector of regressors.
The regressors might consist of quantities which seem relevant to the systemic risk of the financial system. Examples include macroeconomic quantities such as GDP, unemployment, inflation, net-investments etc. 
Let $\Theta\subseteq \R^q$ be a parameter space and let $M\colon \R^p \times \Theta \to \F(\R^d;\R^d_+)$ be a parametric model taking values in the collection of closed upper subsets of $\R^d$. Suppose the model is correctly specified in that there exists a unique parameter $\theta_0\in\Theta$ such that 
\be{eq:model specification}
R(F_{Y_t|X_t}) = M(X_t,\theta_0) \quad \mathbb{P}\text{-a.s. for all }t\in\N.
\ee
Here, $R\colon \M^d\to \F(\R^d;\R^d_+)$ is a law-invariant risk measure of the form at \eqref{eq:R} satisfying the conditions of Theorem 
\ref{thm:elicitability}(iii). Further, suppose that the (regular version of the) conditional distribution $F_{Y_t|X_t}$ of $Y_t$ given $X_t$ is an element of $\M_0^d$ almost surely, where we use the notation of Theorem \ref{thm:elicitability}. Note that the time series does not need to be strongly stationary, but only the conditional distribution $F_{Y_t|X_t}$ needs to satisfy the `semi-parametric stationarity condition' specified via \eqref{eq:model specification}. 
Let $S_{R,\pi}$ be a strictly $\M_0^d$-consistent exhaustive scoring function for $R$. Then, under certain mixing and integrability assumptions specified in \citep[Corollary 3.48]{White2001} one yields the following Law of Large Numbers 
\[
\frac{1}{N} \sum_{t=1}^N \big\{ S_{R,\pi}\left(M(X_t,\theta), Y_t\right) - \E\left[S_{R,\pi}\left(M(X_t,\theta), Y_t\right)\right] \big\} \to 0 \quad \mathbb{P}\text{-a.s.} \quad \text{as }N\to
\infty
\]
for all $\theta\in\Theta$. It is essentially a uniform version (in the parameter $\theta$) of this Law of Large Numbers result which yields the consistency for the empirical estimator
\be{eq:theta hat}
\widehat \theta_N:= \argmin_{\theta \in\Theta} \frac{1}{N} \sum_{t=1}^N S_{R,\pi}\left(M(X_t,\theta), Y_t\right)
\ee
for $\theta_0$; see \cite{VanderVaart1998, HuberRonchetti2009, NoldeZiegel2017} for details. The advantage of this regression approach in comparison to $M$-estimation is that the optimisation at \eqref{eq:theta hat} needs to be performed over a subset $\Theta$ of $\R^q$ only (which is often assumed to be compact). This makes the result computationally a lot more feasible than the optimisation procedure over a collection of upper sets. In other words, $M$-estimation can be considered as a special instance of regression where the regressor $X_t$ is constant and where $\Theta$ corresponds to $\F(\R^d;\R^d_+)$.

Besides the usual practical challenge of constructing reasonable parametric models $M$ to model the systemic risk of a financial system $Y_t$ given regressors $X_t$, we see some interesting theoretical problems related to this regression framework. While, under correct model specification given at \eqref{eq:model specification}, any strictly consistent scoring function $S_{R,\pi}$ induces a consistent estimator $\widehat \theta_N$ at \eqref{eq:theta hat}, the estimator will generally depend on the choice of $S_{R,\pi}$ (or $\pi$) in finite samples. Moreover, the efficiency of the estimator $\widehat \theta_N$, expressed in terms of the asymptotic variance of $\sqrt{N}(\widehat \theta_N - \theta_0)$, will depend on the choice of $S_{R,\pi}$, suggesting an interesting optimality criterion for $S_{R,\pi}$.\\ 
A very modern and interesting approach circumventing this issue is to perform regression \emph{simultaneously} with respect to the class of \emph{all} consistent scoring functions (or a reasonably large subclass), which is explored in the recent paper \cite{JordanMuehlemannZiegel2019}. To perform this efficiently, the mixture representation of scoring functions in terms of elementary scores might prove beneficial. We defer this interesting problem to future research.

\section*{Acknowledgement}
We would like to express our sincere gratitude to Tilmann Gneiting and Johanna Ziegel for insightful discussions and persistent encouragement. We are indebted to Timo Dimitriadis and to Peter Baran\v{c}ok who provided helpful comments in the context of equivariant scores, to Yuan Li for his careful proofreading of an earlier version of this paper, and to Luk\'a\v{s} \v{S}ablica for helpful programming advice on the simulation part of this project.\\
Tobias Fissler gratefully acknowledges financial support from Imperial College London via his Chapman Fellowship and the hospitality of the Institute for Statistics and Mathematics at Vienna University of Economics and Business during several research visits when main parts of the project have jointly been developed.



\appendix
\section{Appendix}

\subsection{Proofs of Section \ref{sec:main results}}\label{app:proofs main results}

\begin{proof}[Proof of Theorem \ref{thm:identification R_0}]
\begin{enumerate}[(i)]
\item
Let $V_\r\colon\R\times \R\to\R$ be a strict $\M$-identification function for $\r$. That means for all $Y\in\Y^d$ with distribution $F\in\M^d$, for all $k\in\R^d$ and for all $x\in\R$, one has that
\be{eq:equivalence}
\E_F\big[V_\r(x,\Lambda(Y+k))\big]=0 \quad \Longleftrightarrow \quad x = \r(\Lambda (Y+ k)).
\ee
Setting $x=0$ in \eqref{eq:equivalence} yields
\[
\E_F\big[V_\r(0,\Lambda(Y+k))\big]=0 \quad \Longleftrightarrow \quad 0 = \r(\Lambda (Y+ k))
\quad \Longleftrightarrow \quad k\in R_0(Y),
\]
which holds in particular for $R_0(Y) = \emptyset$.
Therefore $V_{R_0}(k,y) = V_\r(0,\Lambda(y+k))$ is a strict selective $\M$-identification function for $R_0$.
\item
Now assume that $V_\r\colon\R\times \R\to\R$ is an oriented strict $\M$-identification function for $\r$. That means for all $Y\in\Y^d$ with distribution $F\in\M^d$, for all $k\in\R^d$ and for all $x\in\R$, one has that
\be{eq:orientation proof}
\E_F\big[V_\r(x,\Lambda(Y+k))\big]
\begin{cases}
<0, & \text{if }x<\r(\Lambda (Y+k)) \\
=0, & \text{if }x = \r(\Lambda (Y+k)) \\
>0, & \text{if }x>\r(\Lambda (Y+k)).
\end{cases}
\ee
Setting $x=0$ in \eqref{eq:orientation proof} yields the claim.
\end{enumerate}
\end{proof}

\begin{proof}[Proof of Proposition \ref{prop:characterization V}]
The proof follows along the lines of the proof of Theorem 3.2 in \cite{FisslerZiegel2016}; cf.\ \cite{Osband1985}. The dimensionality of $x$ does not play any role in the proof. As our identification functions map to $\R$, we use $k=1$ in the proof of Theorem 3.2 of \cite{FisslerZiegel2016}. The assumption on the existence of $F_1, F_2\in\M^d$ such that the signs of $\bar{V}_{R_0}$ are different plus the convexity of $\M^d$ are equivalent to Assumption (V1) in \cite{FisslerZiegel2016}. If we replace $\nabla\bar{S}(x,F)$ by $\bar{V}'(x,F)$, we obtain that there is a function $h\colon\A\to\R$ such that 
\[
\bar{V}'(x,F)=h(x)\bar{V}(x,F)
\] 
for all $x\in\A$ and all $F\in\M^d$. Since the matrix $\mathbb{B}_G$ in the proof will be a $2\times3$ matrix of rank 1 for any $x\in\A$, $h(x)$ has to be nonzero for all $x\in\A$.
\end{proof}

\begin{proof}[Proof of Proposition \ref{prop:EAR identifiability}]
Let $F\in\M^d$ and $EAR_w(F)\neq\emptyset$.
Note that for any $k\in\R^d$, $\bar V_{{EAR}_w}(k,F)$ evaluates $\bar{V}_{R_0}(\,\cdot\,,F)\colon \R^d\to\R$ on the hyperplane orthogonal to $w$ containing $k$. 
Since $EAR_w(F) \subseteq R_0(F)$, $k\in EAR_w(F)$ implies that $\bar V_{R_0}(k,F)=0$. The orientation of $V_{R_0}$, and the facts that $EAR_w(F)$ is the intersection of $R(F)$ with the supporting hyperplane for $R(F)$ orthogonal to $w$ and that $R(F)$ is an upper set imply that $\bar{V}_{R_0}(k+x,F) = \bar{V}_{EAR_w}(k,F)(x)\leq0$ for all $x\in w^\bot$. If $k\notin EAR_w(F)$, there are two possibilities: 
\begin{enumerate}[(i)]
\item 
The orthogonal hyperplane containing $k$ has an empty intersection with $R(F)$ such that $\bar{V}_{R_0}(k+x,F)<0$ for all $x\in w^\bot$.
\item 
The orthogonal hyperplane containing $k$ has a non-empty intersection with $R(F)\setminus R_0(Y)$ such that there is some $x\in w^\bot$ with $\bar{V}_{R_0}(k+x,F)>0$.
\end{enumerate}
Now let $EAR_w(F)=\emptyset$. If $R(F)=\emptyset$, \eqref{eq:identification EAR} holds trivially. If $R(F)$ is non-empty, $EAR_w(F)$ is only empty if there is no supporting hyperplane for $R(F)$ orthogonal to $w$. Then for any $k\in\R^d$, there are $ x_1, x_2\in w^\bot$ such that $\bar{V}_{EAR_w}(k,F)(x_1)>0$ and $\bar{V}_{EAR_w}(k,F)(x_2)<0$, as depicted in the right part of Figure \ref{fig:proofEAR}.
\end{proof}

\begin{figure}
\centering
\includegraphics[width = 0.9\textwidth]{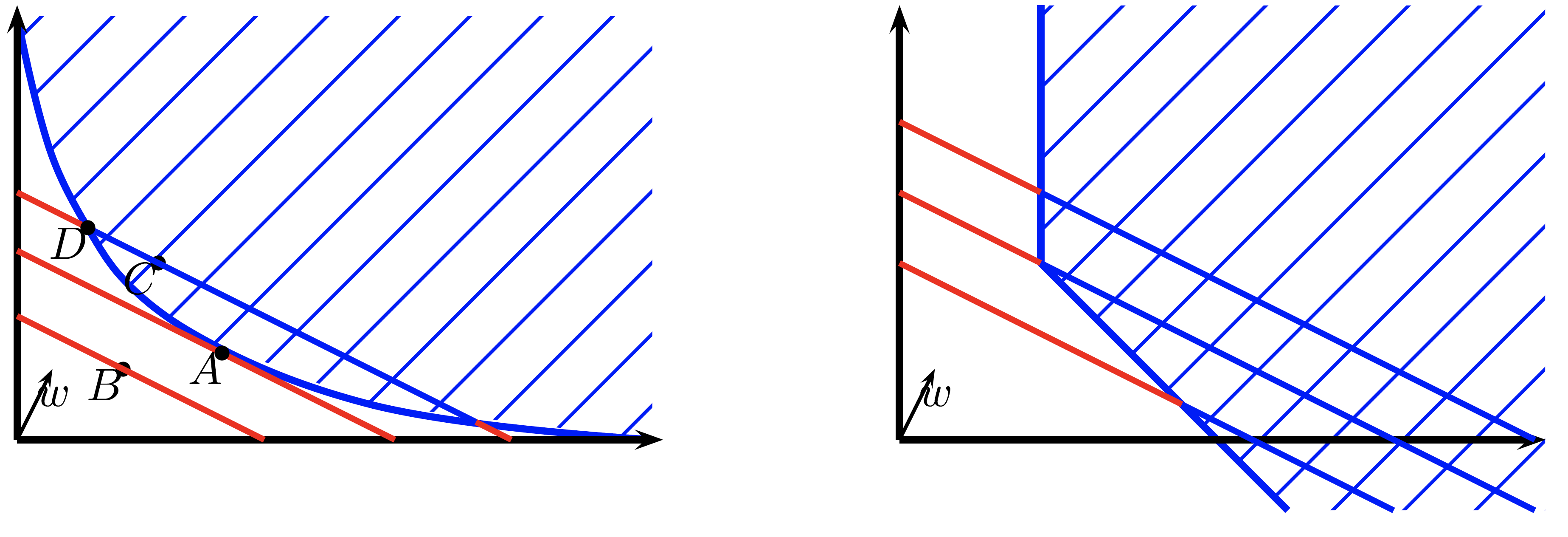}
\caption{A graphical illustration of the proof of Proposition \ref{prop:EAR identifiability} for dimension $d=2$. Suppose the blue region corresponds to the correctly specified risk measure $R(F)$. In the left picture, $EAR_w(F)$ is a singleton, containing only point $A$. Point $B$ corresponds to case (i), whereas points $C$ and $D$ are examples of case (ii) for points that are not in $EAR_w(F)$. In the right picture, $EAR_w(F)=\emptyset$. 
For any $k\in\R^d$ there is some $x\in w^\bot$ such that $\bar V_{R_0}(k+x,F)>0$.}
\label{fig:proofEAR}
\end{figure}

\begin{proof}[Proof of Proposition \ref{prop:R0 to rho}]
The `only if' part is a special case of Theorem \ref{thm:identification R_0}. 
For the `if' part, assume $V_{R_0}\colon\R^d\times\R^d\to\R$ is a strict selective $\M^d$-identification function for $R_0$. For any $Y\in\Y^d$ it holds that $\E[V_{R_0}(0,Y)]=0\Leftrightarrow 0\in R_0(Y)\Leftrightarrow\r (\Lambda(Y))=0$. Then we obtain that for any $s\in\R$ and any $X\in\Y$
\[
\r(X)=s\ \Longleftrightarrow \ \r(X+s)=0 \ \Longleftrightarrow \ \E[V_{R_0}(0,\eta(X+s))]=0.
\] 
Thus $\r$ is identifiable with a strict selective $\M$-identification function $V_{\r}\colon\R\times\R\to\R$, $V_{\r}(s,x)=V_{R_0}(0,\eta(x+s))$.
\end{proof}

For the proof of Theorem \ref{thm:elicitability}, we need the following lemma.

\begin{lem}\label{lem:non-empty interior}
Let $A_1,A_2 \in \F(\R^d;\R^d_+)$. Then, the symmetric difference $A_1\triangle A_2 = (A_1\setminus A_2) \cup (A_2\setminus A_1)$ is empty if and only if its interior, $\interior(A_1\triangle A_2)$, is empty.
\end{lem}

\begin{proof}[Proof of Lemma \ref{lem:non-empty interior}]
If $A_1\triangle A_2 = \emptyset$ it is clear that $\interior(A_1\triangle A_2 )= \emptyset$.

Assume that there is an $x\in A_1\triangle A_2$. Without loss of generality, we can assume that $x\in A_1\setminus A_2$. If $x\in\interior(A_1\setminus A_2)$, we are done. Hence, let $x\in(A_1\setminus A_2)\setminus \interior(A_1\setminus A_2)$ which implies that $x\in \partial (A_1\setminus A_2)$, where $\partial (A_1\setminus A_2)$ denotes the boundary of $A_1\setminus A_2$. It holds that $\partial (A_1\setminus A_2) = \partial (A_1\cap A_2^c) \subseteq \partial A_1 \cup \partial (A_2^c) = \partial A_1 \cup \partial A_2$. But since $\partial A_2\subseteq A_2$ and $x\in A_1\setminus A_2$, it follows that $x\in\partial A_1\setminus A_2$.\\
Due to the definition of the boundary, this means that for all $\eps>0$ it holds that $B_\eps(x)\cap A_1\neq \emptyset$, where $B_\eps(x)$ is the open ball with centre $x$ and radius $\eps$. Assume that for all $\eps>0$ we have $B_\eps(x)\cap A_2\neq \emptyset$, then $x\in\bar A_2 = A_2$, which is a contradiction to the assumption that $x\in \partial A_1\setminus A_2$.
That means there exists an $\eps_0>0$ such that $B_{\eps_0}(x)\cap A_2= \emptyset$. 
Moreover, since $A_1$ is an upper set, $x+\R^d_{++}$ is a non-empty open subset of $A_1$. Furthermore, we see that $B_{\eps_0}(x)\cap (x+\R^d_{++})$ is a non-empty open subset of $A_1$ which is disjoint from $A_2$. This means that $\interior(A_1\setminus A_2)\neq \emptyset$.
\end{proof}

\begin{proof}[Proof of Theorem \ref{thm:elicitability}]
\begin{enumerate}[(i)]
\item
Let $k\in\R^d$, $A\in\widehat \P(\R^d;\R^d_+)$ and $F\in\M^d$. 
A direct calculation yields that 
\begin{align}\label{eq:consistency}
\bar S_{R,k}(A,F) - \bar S_{R,k}(R(F),F) 
&=  \big(\one_{R(F) \setminus A}(k) - \one_{A\setminus R(F)}(k) \big)\bar V_{R_0}(k,F)
\ge0,
\end{align}
where the last inequality is a direct consequence of the weak form of orientation given at \eqref{eq:weak orientation}. The non-negativity of $S_{R,k}$ follows from the $\M^d$-consistency, exploiting that $\delta_y\in\M^d$ for all $y\in\R^d$ and $S_{R,k}(A,y) \ge S_{R,k}(R(y),y) = 0$.
\item
This is a direct consequence of the non-negativity and consistency of the scores $S_{R,k}$.
\item
Let $F\in\M^d$, and $A^* := R(F), A \in\F(\R^d;\R^d_+)$ with $A\neq A^*$. Assume that the inequality $\bar S_{R,\pi}(A,F), \bar S_{R,\pi}(A^*,F)<\infty$ holds (otherwise, there is nothing to show). Using Fubini's Theorem, we obtain
\begin{align}\nonumber
\bar S_{R,\pi}(A,F) - \bar S_{R,\pi}(A^*,F) 
&= \int_{A^*\setminus A} \bar V_{R_0}(k,F) \,\pi(\diff k) - \int_{A\setminus A^*} \bar V_{R_0}(k,F) \,\pi(\diff k).
\end{align}

Then Lemma \ref{lem:non-empty interior} yields that $\interior(A\setminus A^*)\neq \emptyset$ or $\interior(A^*\setminus A)\neq \emptyset$. If $\interior(A\setminus A^*)\neq \emptyset$, the fact that $\bar V(\cdot, F)$ is strictly negative on $(A^*)^c$ and the assumption that $\pi$ assigns positive mass to any non-empty open set in $\mathcal B(\R^d)$ implies that 
\[
\int_{A\setminus A^*} \bar V_{R_0}(k,F) \,\pi(\diff k)<0,
\]
which implies that $\bar S_{R,\pi}(A,F) - \bar S_{R,\pi}(A^*,F) >0$. \\
Assume $\interior(A^*\setminus A)\neq \emptyset$. The boundary $\partial A^* = R_0(F) = \{k\in\R^d\,|\,\bar V_{R_0}(k,F)=0\}$ is a closed set. That means that $\interior(A^*\setminus A)\setminus \partial A^*$ is open and non-empty. Moreover $\bar V_{R_0}(\cdot,F)$ is strictly positive on $\interior(A^*\setminus A)\setminus \partial A^*$. Hence, 
\[
\int_{A^*\setminus A} \bar V_{R_0}(k,F) \,\pi(\diff k)\ge\int_{\interior(A^*\setminus A)\setminus \partial A^*} \bar V_{R_0}(k,F) \,\pi(\diff k) >0,
\]
which implies that $\bar S_{R,\pi}(A,F) - \bar S_{R,\pi}(A^*,F) >0$. 
\end{enumerate}
\end{proof}


\begin{proof}[Proof of Proposition \ref{prop:order-sensitivity}]
For the first part of the Proposition, it is sufficient to show order-sensitivity for the elementary scores $S_{R,k}$ given at \eqref{eq:S_{R,k}}. 
Let $A\subseteq B\subseteq R(F)$. Then, for any $k\in\R^d$,
\[
\bar S_{R,k}(A,F) - \bar S_{R,k}(B,F) = \one_{B\setminus A} (k) \bar V_{R_0}(k,F) \ge0,
\]
due to the orientation given at \eqref{eq:weak orientation}. On the other hand, for $A\supseteq B\supseteq R(F)$ we obtain that 
\[
\bar S_{R,k}(A,F) - \bar S_{R,k}(B,F) = -\one_{A\setminus B} (k) \bar V_{R_0}(k,F) \ge0,
\]
where the inequality follows again by \eqref{eq:weak orientation}.

The second part of the proposition follows along the lines of the proof of Theorem \ref{thm:elicitability}(iii).
\end{proof}

\subsection{Proofs of Section \ref{sec:secondary}}\label{app:proofs secondary}

\begin{proof}[Proof of Lemma \ref{lem:R pos hom}]
Assume that $\r$ is a positively homogeneous scalar risk measure and $\Lambda$ is positively homogeneous of degree $b\in\R$. Let $c>0$ and $Y\in\Y^d$.
\begin{align*}
R(cY)=&\left\{k\in\R^d\,|\,\r(\Lambda(cY+k))\leq0\right\}=\left\{k\in\R^d\,|\,\r\left(c^b\Lambda\left(Y+k/c\right)\right)\leq0\right\}\\
=&\left\{k\in\R^d\,|\,c^b\r\left(\Lambda\left(X+k/c\right)\right)\leq0\right\}=\left\{k\in\R^d\,|\,\r\left(\Lambda\left(X+k/c\right)\right)\leq0\right\}\\
=& c\left\{k\in\R^d|\r(\Lambda(X+k))\leq0\right\}=cR(Y).
\end{align*}
\end{proof}

\begin{proof}[Proof of Lemma \ref{lem:ident}]
\begin{enumerate}[(i)]
\item 
Let $y,k,l\in\R^d$. Then 
\[
V_{R_0}(k+l,y-l)=V_\r(0,\Lambda(k+l+y-l))=V_\r(0,\Lambda(k+y))=V_{R_0}(k,y).
\]
\item
Let $V'_{R_0}$ be another translation invariant strict $\M^d$-identification function for $R_0$. Using Proposition \ref{prop:characterization V} there is a non-vanishing function $h\colon\R^d\to\R$ such that 
$\bar V'_{R_0}(x,F) = h(x) \bar V_{R_0}(x,F)$
for all $x\in\R^d$ and for all $F\in\M^d$. 
One can show that $h$ is constant along the lines of the proof of Proposition 4.7(ii) in \cite{FisslerZiegel2019}.
\item 
Let $c>0$, $k,y\in\R^d$. Then
\begin{align*}
V_{R_0}(ck, cy)&=V_\r(0,\Lambda(ck+cy))=V_\r(0,c^b\Lambda(k+y))\\
&=c^{ab}V_\r(0,\Lambda(k+y))=c^{ab}V_{R_0}(k,y).
\end{align*}
\end{enumerate}
\end{proof}

\begin{proof}[Proof of Proposition \ref{prop:translation invariance}]
First observe that $V_{R_0}(k,y) = V_\r(0,\Lambda(y+k))$ is an oriented selective translation invariant strict $\M^d$-identification function for $R_0$, invoking Lemma \ref{lem:ident}(i) and Theorem \ref{thm:identification R_0}. Then a direct computation yields that 
\be{eq:translation inv proof}
S_{R,k}(A+l,y - l) = S_{R,k-l}(A,l)
\ee
for all $A\in 2^{\R^d}$, $y, l\in\R^d$. Therefore, \eqref{eq:translation inv proof} and the translation invariance of the Lebesgue measure show part (i).\\
For part (ii), assume that $S$ is of the form at \eqref{eq:S_R,pi} and is translation invariant where, invoking the discussion above, we may assume without loss of generality that the elementary scores $S$ are based on the translation invariant identification function $V_{R_0}$. For $l\in\R^d$ define the measure $\pi_l(A) = \pi(A+l)$ for $A\in\mathcal B(\R^d)$. Note that since $S$ is assumed to be finite this implies that the score differences are well-defined and are translation invariant as well. For $A\subseteq B$ this implies that for any $l\in\R$
\[
\int_{B\setminus A}V_{R_0}(z,y)\pi_l(\diff z)=\int_{B\setminus A}V_{R_0}(z,y)\pi(\diff z).
\]
Any set of the form $I=[a_1,b_1)\times\cdots\times[a_d,b_d)$, $a_i,b_i\in\R$, $a_i\leq b_i$, can be represented as $B\setminus A$ for some $A,B\in\F(\R^d;\R^d_+)$ with $A\subseteq B$. 
The system of these sets $I$, however, is a generator of $\mathcal B(\R^d)$, and we conclude that for any $l\in\R^d$
\be{eq:identity proof}
\nu_{y,l}(D):=\int_{D}V_{R_0}(z,y)\pi_l(\diff z)=\int_{D}V_{R_0}(z,y)\pi(\diff z)=:\nu_{y}(D).
\ee
for all $D\in \mathcal B(\R^d)$. For each $D\in \mathcal B(\R^d)$ we obtain the decomposition (depending on $y$) $D = D_y^+ \cup D_y^- \cup D_y^0$, where $
D_y^- = D\cap R(y)^c$, $D_y^0 = D\cap R_0(y)$ and $D_y^+ = D\cap R(y) \setminus R_0(y)$. Hence, \eqref{eq:identity proof} and the strictness of the identification function $V_{R_0}$ imply that for $E = D_y^+$ or $E = D_y^-$
\be{eq:identity proof 2}
\pi_l(E) = \int_{E}\frac{1}{V_{R_0}(z,y)}\nu_{y,l}(\diff z)=\int_{E}\frac{1}{V_{R_0}(z,y)}\nu_{y}(\diff z)=\pi(E).
\ee
The translation equivariance of $R$ and Assumption \eqref{ass:closedness} imply that \eqref{eq:identity proof 2} holds for all $E\in\mathcal B(\R^d)$. That means that $\pi = \gamma\L^d$ for some $\gamma\ge0$.
\end{proof}

\begin{proof}[Proof of Proposition \ref{prop: positive homogeneity}]
Assume that $V_{R_0}$ is an oriented strict selective $\M^d$-identification function for $R_0$ which is positively homogeneous of degree $a\in\R$. A direct computation yields that 
\be{homogeneity proof}
S_{R,k}(cA,cy)=c^aS_{R,\frac{k}{c}}(A,y)
\ee
for all $A\in2^{\R^d}$, $y\in\R^d$ and $c>0$. If $\pi$ is positively homogeneous of degree $b\in\R$, \eqref{homogeneity proof} implies that $S_{R,\pi}$ in \eqref{eq:homogeneous} is positively homogeneous of degree $a+b$.
\\
For part (ii), assume that $S$ is of the form at \eqref{eq:S_R,pi} and is positively homogeneous of degree $a+b$. For $c\in\R$ define the measure $\pi_c(A)=\pi(cA)$ for $A\in\mathcal B(\R^d)$. Note that since $S$ is assumed to be finite, the positive homogeneity of $S$ implies that the score differences are well-defined and are also positively homogeneous of degree $a+b$. For $A\subseteq B$ a direct computation shows that for any $c>0$
\[
\int_{B\setminus A}V_{R_0}(z,y)\pi_c(\diff z)=c^b\int_{B\setminus A}V_{R_0}(z,y)\pi(\diff z).
\]
With the same arguments as in the proof of Proposition \ref{prop:translation invariance}, we conclude that 
\[
\nu_{y,c}(D):=\int_DV_{R_0}(z,y)\pi_c(\diff z)=c^b\int_D V_{R_0}(z,y)\pi(\diff z) =:c^b\nu_y(D)
\]
for all $D\in\mathcal B(\R^d)$ and
\be{homogeneity proof 4}
\pi_c(E)=\int_{E}\frac{1}{V_{R_0}(z,y)}\nu_{y,c}(\diff z)=\int_E\frac{1}{V_{R_0}(z,y)}c^b\nu_y(\diff z)=c^b\pi(E)
\ee for $E=D\cap R(y)^c$ or $E=D\cap R(y)\setminus R_0(y)$. Finally, the translation equivariance of $R$ and Assumption \eqref{ass:closedness} imply that \eqref{homogeneity proof 4} holds for all $E\in\mathcal B(\R^d)$. That means $\pi$ is positively homogeneous of degree $b$.
\end{proof}

\subsection{Proofs of Section \ref{sec:ES}}\label{app:proofs ES}

\begin{proof}[Proof of Proposition \ref{prop:ES ident}]
Let $F\in\M^d$ and $k\in\R^d$. Then
\begin{align*} 
\bar U_2(T^{\VaR_\a}(F),k,F)
&= \frac{1}{\a}\E_F[\Lambda(Y+k)\,\one\{\Lambda(Y+k)\le - T^{\VaR_\a}(F)(k)\}] \\ 
& + \frac{1}{\a} \VaR_\a(\Lambda(Y+k))\big(F_{\Lambda(Y+k)}(-\VaR_\a(\Lambda(Y+k))) - \a\big)\\
&= -\ES_\a(\Lambda(Y+k)) \\ 
&\begin{cases}
<0, & \text{if } k\notin R^{\ES_\a}(F)\\
=0, & \text{if } k\in R^{\ES_\a}_0(F)\\
>0, & \text{if } k\in R^{\ES_\a}(F)\setminus R^{\ES_\a}_0(F),
\end{cases}
\end{align*}
where $F_{\Lambda(Y+k)}$ is the distribution function of $\Lambda (Y+k)$. 
Under Assumption \eqref{ass:regularity} it holds that  $\bar U_1(T^{\VaR_\a}(F),k,F)= 0$. Therefore, one ends up with the second assertion.
\end{proof}

\begin{proof}[Proof of Theorem \ref{thm:ES elicitability}]
\begin{enumerate}[(i)]
\item
Let $F\in\M^d$, $v\in\R^{\R^d}$, $A\in\widehat \P(\R^d;\R^d_+)$ and $v^* = T^{\VaR_\a}(F)$, $A^* = R^{\ES_\a}(F)$ and $k\in\R^d$. If $\bar S_k(v,A,F) = \infty$ there is nothing to show. So we assume that $\bar S_k(v,A,F)$ is finite. Consider
\begin{multline*}
\bar S_k(v,A,F) - \bar S_k(v^*,A,F)
= \one_A(k)\big[-\bar U_2(v,k,F) + U_2(v^*,k,F)\big] \\
= \one_A(k)\E_F\big[ S_{\a,\mathrm{id}}(-v(k), \Lambda(Y+k)) - S_{\a,\mathrm{id}}(-v^*(k), \Lambda(Y+k))  \big]
\ge0,
\end{multline*}
since $S_{\a,\mathrm{id}}$ is consistent for the $\a$-quantile.
If $\bar S_k(v^*,A,F) = \infty$ we are done. Otherwise, consider
\[
\bar S_k(v^*,A,F)  - \bar S_k(v^*,A^*,F) 
= \big( \one_{A^* \setminus A}(k) - \one_{A\setminus A^*}(k)\big)\bar U_2(v,k,F)\ge0,
\]
where the inequality follows from \eqref{eq:U orientation}. The non-negativity follows from the consistency and the fact that $S_k(T^{\VaR_\a}(\delta_y), R^{\ES_\a}(\delta_y),y) = 0$.
\item
Due to part (i), the score $S_{0,\pi_2}$ is $\M^d$-consistent for $(T^{\VaR_\a}, R^{\ES_\a})\colon\M^d\to \R^{\R^d} \times \widehat \P(\R^d;\R^d_+)$.
Since $S_{\a,g_z}$ is a consistent selective scoring function for the $\a$-quantile, the assertion follows invoking Fubini's Theorem.
\item
Let $F\in\M_0^d$, $v\in \mathcal C(\R^d;\R)$, $A\in\F(\R^d;\R^d_+)$ and $v^* = T^{\VaR_\a}(F)$, $A^* = R^{\ES_\a}(F)$. If $v\neq v^*$ then $K = \{k\in\R^d\,|\,v(k)\neq v^*(k)\} \neq \emptyset$ is open. If $\bar S_{\pi_1,\pi_2}(v,A,F)=\infty$ there is nothing to show. Otherwise
\begin{align} \nonumber 
&\E_F[S_{\pi_1,\pi_2}(v,A,Y) - S_{\pi_1,\pi_2}(v^*,A,Y)] \\ \nonumber
&\ge \int_{K} \E_F\big[S_{\a,g_k}(-v(k),\Lambda(Y+k)) - S_{\a,g_k}(-v^*(k),\Lambda(Y+k))\big] \,\pi_1(\diff k)\\ \nonumber
&\quad+\frac{1}{\a}\int_{A\cap K} \E_F\big[S_{\a,\mathrm{id}}(-v(k),\Lambda(Y+k)) - S_{\a,\mathrm{id}}(-v^*(k),\Lambda(Y+k))\big] \,\pi_2(\diff k) 
>0,
\end{align}
where the first integral is strictly positive and the second one is non-negative (and strictly positive if and only if $\pi_2(A\cap K)>0$).\\
If $A\neq A^*$, then $\E_F[S_{\pi_1,\pi_2}(v^*,A,Y) -S_{\pi_1,\pi_2}(v^*,A^*,Y)]>0$, which follows with similar arguments as in the proof of Theorem \ref{thm:elicitability}(iii).
\end{enumerate}
\end{proof}

\bibliographystyle{apacite}

\end{document}


\title{Supplementary Material to:\\
Elicitability and Identifiability of Systemic Risk Measures}
\author{Tobias Fissler\thanks{Vienna University of Economics and Business, Institute for Statistics and Mathematics, Welthandels\-platz 1, 1020 Vienna, Austria, 
\texttt{tobias.fissler@wu.ac.at},
\texttt{jana.hlavinova@wu.ac.at} and \texttt{birgit.rudloff@wu.ac.at}}
\and Jana Hlavinov\'{a}\footnotemark[1]
\and Birgit Rudloff\footnotemark[1]
}
\maketitle

\begin{abstract}
\textbf{Abstract.}
This note contains supplementary material to the paper \cite{FisslerHlavinovaRudloff_RM}. In particular, it gathers results on systemic risk measures insensitive to capital allocations and gives technical results related to the \emph{revelation principle}.
Moreover, additional simulation results are reported, complementing the ones in the previous paper.
\end{abstract}
\noindent
\textit{Keywords:}
Consistency; 
Elicitability; 
Identifiability; 
Forecast evaluation; 
Murphy diagrams; 

\noindent
\textit{MSC 2010 Subject Classification: } 62F07; 62F10; 91G70 

\section{Systemic risk measures insensitive to capital allocations}\label{app:Rins}

Throughout this note, we use the same notation introduced in \cite{FisslerHlavinovaRudloff_RM}.
We give some elicitability and identifiability results for systemic risk measures which are insensitive with respect to capital allocation. They are similar to some results in Section 3 of \cite{FisslerHlavinovaRudloff_RM}, however often less technically involved. To this end, we first introduce $R^{ins}$ and other law-invariant risk measures derived from $R^{ins}$:
\begin{align}
\label{eq:Rins}
&R^{ins}\colon \Y^d \to 2^{\R^d}, && Y \mapsto  R^{ins}_0(Y) = \{k\in\R^d\,|\,\r(\Lambda (Y)+\bar k)\le0\},\\
\label{eq:Rins0}
&R^{ins}_0\colon \Y^d \to 2^{\R^d}, && Y \mapsto  R^{ins}_0(Y) = \{k\in\R^d\,|\,\r(\Lambda (Y)+\bar k)=0\},\\
\label{eq:r}
&r\colon \Y^d\to \R, && Y \mapsto  r(Y) = \r(\Lambda(Y)),
\end{align}
where we recall the shorthand $\bar k = \sum_{i=1}^d k_i$ for some vector $k = (k_1, \ldots, k_d)\in\R^d$.
Due to the cash-invariance of $\r$, $R_0^{ins}$ is a bijection of $r$. Note that $R^{ins}$ is always a closed half-space above the hyperplane with normal $(1,\ldots, 1)^\top\in\R^d$ and $R^{ins}_0$ corresponds to its topological boundary. This implies a one-to-one relationship between $R_0^{ins}$ and $R^{ins}$. Thanks to these facts, one can make use of an extension of the so called \emph{revelation principle} which originates from \citeauthor{Osband1985}'s (\citeyear{Osband1985}) seminal thesis.

\begin{prop}[Revelation principle]\label{prop:revelation}
Let $\T\colon\M^d\to\R$ be an identifiable and elicitable functional, $g\colon\R^d\to\R$ some map, and $h\colon \A\to\R$, $\A\subseteq2^{\R^d}$ a bijection with inverse $h^{-1}$. Then the following assertions hold true:
\begin{enumerate}[\rm (i)]
\item
$\T\colon\M^d\to\R$ is identifiable if and only if $\T_{h^{-1}}=h^{-1}\circ \T\colon\M^d\to\A$ is exhaustively identifiable. The function $V\colon\R\times\R^d\to\R$ is a strict $\M^d$-identification function for $T$ if and only if 
\begin{align*}
V_{h^{-1}}\colon\A\times\R^d\to\R, \qquad (B,y)\mapsto V(h^{-1}(B),y)
\end{align*}
is a strict $\M^d$-identification function for $\T_{h^{-1}}=h^{-1}\circ \T\colon\M^d\to\A$.
\item
$\T\colon\M^d\to\R$ is elicitable if and only if $\T_{h^{-1}}=h^{-1}\circ \T\colon\M^d\to\A$ is exhaustively elicitable. The function $S\colon\R\times\R^d\to\R$ is a strictly $\M^d$-consistent scoring function for $T$ if and only if 
\begin{align*}
S_{h^{-1}}\colon\A\times\R^d\to\R, \qquad (B,y)\mapsto S(h^{-1}(B),y)
\end{align*}
is a strictly $\M^d$-consistent exhaustive scoring function for $\T_{h^{-1}}=h^{-1}\circ \T\colon\M^d\to\A$.
\item
If $S\colon\R\times\R^d\to\R$ is a (strictly) consistent scoring function for $\T$, then 
\[
S_{g^{-1}}\colon\R^d\times\R^d\to\R,\qquad (x,y)\mapsto S(g(x),y)
\]
is a (strictly) $\M^d$-consistent selective scoring function for $\T_{g^{-1}}=g^{-1}\circ \T\colon\M^d\to2^{\R^d}$.
\item
If $V\colon\R\times\R^d\to\R$ is a (strict) $\M^d$-identification function for $\T$, then
\[
V_{g^{-1}}\colon\R^d\times\R^d\to\R,\qquad (x,y)\mapsto V(g(x),y)
\]
is a (strict) selective $\M^d$-identification function for $\T_{g^{-1}}$.
\item
If $V\colon\R\times\R^d\to\R$ is an oriented strict identification function for $\T$ and $g$ is strictly increasing with respect to the componentwise order, then $V_{g^{-1}}\colon\R^d\times\R^d\to\R$ is an oriented strict selective $\M^d$-identification function for $\T_{g^{-1}}$ in the following sense:
\[
\bar{V}_{g^{-1}}(x,Y) \begin{cases}
<0, & \text{if } x\in (g^{-1}(\{\T(F)\})-\R^d_+)\setminus g^{-1}(\{\T(F)\}) \\
=0, & \text{if } x\in g^{-1}(\{\T(F)\}) \\
>0, & \text{if } x\in  (g^{-1}(\{\T(F)\})+\R^d_+)\setminus g^{-1}(\{\T(F)\}).
\end{cases}
\]
\end{enumerate}
\end{prop}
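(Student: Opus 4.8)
The plan is to reduce each assertion to the defining property of the notion involved — strict (exhaustive / selective) identification function, strictly (exhaustive / selective) consistent scoring function — and to exploit that composing with the bijection $h$, resp.\ with the general map $g$, leaves the relevant zero/extremality conditions invariant up to relabelling. For parts (i) and (ii) the key observation is that, since $h\colon\A\to\R$ is a bijection, for every $F\in\M^d$ and every $B\in\A$ one has $B=\T_{h^{-1}}(F)=h^{-1}(\T(F))$ if and only if $h(B)=\T(F)$. Consequently $\E_F[V_{h^{-1}}(B,Y)]=\E_F[V(h^{-1}(B),Y)]$ vanishes if and only if $\E_F[V(t,Y)]=0$ for $t=h^{-1}(B)$, which by strictness of $V$ means $t=\T(F)$, i.e.\ $B=\T_{h^{-1}}(F)$; this is exactly (exhaustive) strict $\M^d$-identification of $\T_{h^{-1}}$, and the argument reverses because $h^{-1}$ is a bijection too. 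The same substitution in $\E_F[S_{h^{-1}}(B,Y)]=\E_F[S(h^{-1}(B),Y)]$ shows that $B\mapsto\E_F[S_{h^{-1}}(B,Y)]$ and $t\mapsto\E_F[S(t,Y)]$ have the same minimisers under $t=h^{-1}(B)$, so $\{\T_{h^{-1}}(F)\}$ is the unique minimiser of the former iff $\{\T(F)\}$ is the unique minimiser of the latter. The ``if and only if'' statements about identifiability and elicitability then follow by invoking the existence of some strict identification function, resp.\ strictly consistent scoring function, and transporting it along $h$; routine measurability and integrability conditions transfer verbatim.

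For (iii) and (iv) the map $g$ is not assumed injective, but inversion is not needed: for every $x\in\R^d$ we have $\E_F[S_{g^{-1}}(x,Y)]=\E_F[S(g(x),Y)]$ and $\E_F[V_{g^{-1}}(x,Y)]=\E_F[V(g(x),Y)]$. Consistency of $S$ gives $\E_F[S(g(x),Y)]\ge\E_F[S(\T(F),Y)]=\E_F[S_{g^{-1}}(t,Y)]$ for every $t\in g^{-1}(\{\T(F)\})=\T_{g^{-1}}(F)$ and every $x$, which is selective $\M^d$-consistency; if $S$ is strictly consistent, equality forces $g(x)=\T(F)$, hence $x\in\T_{g^{-1}}(F)$, giving strictness. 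Likewise $\E_F[V(g(x),Y)]=0$ whenever $x\in g^{-1}(\{\T(F)\})$ and, in the strict case, only then, which is (strict) selective $\M^d$-identification of $\T_{g^{-1}}$.

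Part (v) refines (iv) by tracking signs. Writing $\bar V_{g^{-1}}(x,Y)=\bar V(g(x),Y)$: if $x\in g^{-1}(\{\T(F)\})$ then $g(x)=\T(F)$ and the value is $0$ by orientedness of $V$; if $x\in(g^{-1}(\{\T(F)\})-\R^d_+)\setminus g^{-1}(\{\T(F)\})$, choose $x_0\in g^{-1}(\{\T(F)\})$ with $x\le x_0$ componentwise, note $x\neq x_0$ since $x\notin g^{-1}(\{\T(F)\})$, and use strict monotonicity of $g$ to get $g(x)<g(x_0)=\T(F)$, so that $\bar V(g(x),Y)<0$; the upper case is symmetric. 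I expect this last part to be the only genuinely delicate point: one has to make sure the hypothesis picks out, for each $x$ in the lower (resp.\ upper) set, a comparable representative $x_0\in g^{-1}(\{\T(F)\})$, and that ``strictly increasing with respect to the componentwise order'' is read as ``$x\le x'$ and $x\neq x'$ imply $g(x)<g(x')$'' — precisely what is needed to push the value strictly off zero in the correct direction. All remaining verifications are the substitutions above together with unwinding the definitions of exhaustive and selective consistency/identification.
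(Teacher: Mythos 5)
Your proof is correct and follows essentially the same route as the paper: for (iv) and (v) your substitution and sign-tracking arguments match the paper's almost verbatim, while for (i)--(iii) the paper simply cites Lemmas 2.3.2 and 2.3.3 of Fissler (2017) and you unfold the same elementary substitution arguments those lemmas embody. The only micro-omission is the trivially vacuous case $\T_{g^{-1}}(F)=\emptyset$ in (iii)--(iv), which the paper notes explicitly.
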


\begin{proof}[Proof.\ \ ]
\begin{enumerate}
\item[(i)--(ii)]
These statements are a special case of Lemma 2.3.2 in \cite{Fissler2017}.
\item[(iii)]
This is a special case of Lemma 2.3.3 in \cite{Fissler2017}.
\item[(iv)]
Let $F\in\M^d$. If $\T_{g^{-1}}(F)=g^{-1}(\{\T(F)\})=\emptyset$, there is nothing to show. Assume that $x\in\T_{g^{-1}}(F)$. Then we have $g(x)=\T(F)$ and thus $\bar{V}_{g^{-1}}(x,F)=\bar{V}(g(x),F)=0$. Moreover, if $x'\notin \T_{g^{-1}}(F)$, we have $g(x')\neq \T(F)$ and thus if $V$ is a strict identification function for $\T$, we have $\bar{V}_{g^{-1}}(x',F)=\bar{V}(g(x'),F)\neq0$.
\item[(v)]
From the previous part we already have $\bar{V}_{g^{-1}}(x,F)=0$ for $x\in g^{-1}(\{\T(F)\})$. Now assume $x\in (g^{-1}(\{\T(F)\})-\R^d_+)\setminus g^{-1}(\{\T(F)\})$. Then there is some $x'\in g^{-1}(\{\T(F)\})$ such that $x\leq x'$ componentwise and $x\neq x'$ and thus $g(x)<g(x')=\T(F)$. Therefore, using the orientation of $V$, we get $\bar{V}_{g^{-1}}(x,F)<0$. The last part follows by similar considerations.
\end{enumerate}
\end{proof}

\begin{lem}\label{lem:identification r}
Let $\r\colon\M\to\R$ be identifiable and elicitable with a strict $\M$-identification function $V_\r\colon\R\times\R\to\R$ and a strictly $\M$-consistent scoring function $S_\r\colon\R\times\R\to\R$. Then the following assertions hold for $r\colon\M^d\to\R$ defined at \eqref{eq:r}:
\begin{enumerate}[\rm (i)]
\item $r$ is identifiable and
\be{eq:def V_r}
V_r\colon\R\times\R^d\to\R, \qquad (x,y)\mapsto V_{\r}(x,\Lambda(y))
\ee
is a strict $\M^d$-identification function for $r$.
\item If $V_\r\colon\R\times \R\to\R$ is an oriented strict $\M$-identification function for $\r$, then $V_r$ defined at \eqref{eq:def V_r} is oriented for $r$.
\item $r$ is elicitable and
\[
S_r\colon\R\times\R^d\to\R,\qquad (x,y) \mapsto S_r(k,y)=S_\r(x,\Lambda(y))
\]
is a strictly $\M^d$-consistent scoring function for $r$.
\end{enumerate}
\end{lem}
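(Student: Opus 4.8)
The plan is to derive Lemma~\ref{lem:identification r} directly from the revelation principle (Proposition~\ref{prop:revelation}), exploiting that $r$ is nothing but the composition of $\r$ with the deterministic aggregation map $\Lambda$. More precisely, the natural way to view $r\colon\M^d\to\R$ is as a transported functional: if $F\in\M^d$ denotes the law of $Y\in\Y^d$, then $\Lambda$ pushes $F$ forward to a law $\Lambda_\ast F\in\M$ of $\Lambda(Y)\in\Y$, and $r(F)=\r(\Lambda_\ast F)$. Thus the first step is to record this pushforward identity carefully and to note that the family $\{\Lambda_\ast F : F\in\M^d\}$ is contained in (the relevant subclass of) $\M$, so that the assumed strict $\M$-identification function $V_\r$ and strictly $\M$-consistent scoring function $S_\r$ apply.

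For part (i), I would argue that for any $F\in\M^d$ and any $x\in\R$,
\[
\E_F\!\left[V_r(x,Y)\right] = \E_F\!\left[V_\r(x,\Lambda(Y))\right] = \E_{\Lambda_\ast F}\!\left[V_\r(x,Z)\right],
\]
where $Z\sim\Lambda_\ast F$; by strictness of $V_\r$ the right-hand side vanishes precisely when $x=\r(\Lambda_\ast F)=r(F)$, which is exactly the defining property of a strict $\M^d$-identification function for $r$. (One should also confirm the mild integrability/measurability conditions: $V_r(x,\cdot)$ is measurable as a composition of measurable maps, and integrability transfers through the pushforward.) Part (iii) is entirely analogous: expected scores satisfy $\E_F[S_r(x,Y)] = \E_{\Lambda_\ast F}[S_\r(x,Z)]$, which is uniquely minimized at $x=r(F)$ by strict $\M$-consistency of $S_\r$, giving strict $\M^d$-consistency of $S_r$. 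Part (ii) follows the same line: orientation of $V_\r$ means $\E_{\Lambda_\ast F}[V_\r(x,Z)]$ is negative for $x<\r(\Lambda_\ast F)$ and positive for $x>\r(\Lambda_\ast F)$, and since $r(F)=\r(\Lambda_\ast F)$ this sign pattern transfers verbatim to $\bar V_r(x,F)$.

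Alternatively, and perhaps more cleanly, one can observe that $V_r = (V_\r)_{\mathrm{id}^{-1}}$ and $S_r = (S_\r)_{\mathrm{id}^{-1}}$ are not quite instances of the revelation principle as stated (since there the transformation acts on the forecast argument, not the observation), so the pushforward argument above is the honest route; I would not try to force it through Proposition~\ref{prop:revelation} directly. I do not anticipate a genuine obstacle here — the content is that composing with a fixed aggregation function on the observation side commutes with taking expectations and therefore preserves identification and consistency. The only point requiring a little care is bookkeeping about the domain of $\r$: one must make sure that the image class $\Lambda_\ast\M^d := \{\Lambda_\ast F : F\in\M^d\}$ is a subclass of $\M$ on which $V_\r$ is a \emph{strict} identification function and $S_\r$ is \emph{strictly} consistent, so that strictness (rather than mere consistency) is inherited; this is where the hypotheses on $\r$ are used in full strength.
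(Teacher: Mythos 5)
Your argument is correct and is precisely the content behind the paper's one-word proof (``Obvious''): the identity $\E_F[V_\r(x,\Lambda(Y))]=\E_{\Lambda_\ast F}[V_\r(x,Z)]$ together with $r(F)=\r(\Lambda_\ast F)$ transfers strict identification, orientation, and strict consistency verbatim, and you correctly observe that Proposition~\ref{prop:revelation} does not apply here because it reparametrises the forecast argument rather than the observation. Your bookkeeping remark about $\Lambda_\ast\M^d\subseteq\M$ is exactly the implicit well-posedness assumption built into the definition of $r$ in \eqref{eq:r}, so nothing further is needed.
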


\begin{proof}
Obvious.
\end{proof}

\begin{cor}
Let $\rho\colon\M\to\R$ be identifiable and elicitable with a strict $\M$-identification function $V_\r\colon\R\times\R\to\R$ and a strictly $\M$-consistent scoring function $S_\r\colon\R\times\R\to\R$. Then the following assertions hold for $R^{ins}$ defined at \eqref{eq:Rins} and $R^{ins}_0$ defined at \eqref{eq:Rins0}:
\begin{enumerate}[\rm (i)]
\item
$R^{ins}$ is exhaustively identifiable and exhaustively elicitable. Define $\bar{k}_{min}(B)=\min\left\{\bar{k}\,|\,k\in B\right\}$ for $B\in\A$ where $\A$ is the collection of all closed half-spaces above the hyperplane with normal $(1,\ldots, 1)^\top\in\R^d$. Then
\begin{align*}
V_{R^{ins}}\colon\A\times\R^d\to\R,\qquad (B,y)\mapsto V_\r(\bar{k}_{min}(B),y),\\
S_{R^{ins}}\colon\A\times\R^d\to\R,\qquad (B,y)\mapsto S_\r(\bar{k}_{min}(B),y)
\end{align*}
are a strict exhaustive $\M^d$-identification function and a strictly $\M^d$-consistent exhaustive scoring function for $R^{ins}$, respectively.
\item
$R^{ins}_0$ is selectively identifiable and
\[
V_{R_0^{ins}}\colon \R^d\times \R^d\to\R, \qquad (k,y) \mapsto V_{R_0^{ins}}(k,y) = V_\r(\bar k,\Lambda(y))
\]
is a strict selective $\M^d$-identification function for $R_0^{ins}$. Moreover, if $V_\rho$ is oriented meaning that $\bar V_{\rho}(x, F)\ge0$ iff $x\ge \rho(F)$ for all $F\in\M$ and for all $x\in\R$, then $V_{R_0^{ins}}$ is oriented in the sense that for all $k\in\R^d$ and for all $F\in\M^d$
\[
\bar{V}_{R_0}^{ins}(k,F) \begin{cases}
<0, & \text{if } k\notin R^{ins} (F) \\
=0, & \text{if } k\in R_0^{ins}(F) \\
>0, & \text{if } k\in  R^{ins}(F) \setminus R_0^{ins}(F).
\end{cases}
\]
\item
$R^{ins}_0$ is selectively elicitable and
\[
S_{R_0^{ins}}\colon\R^d\times\R^d\to\R,\qquad (k,y) \mapsto S_{R_0^{ins}}(k,y)=S_\r(\bar k,\Lambda(y))
\]
is a strictly $\M^d$-consistent selective scoring function for $R_0^{ins}$.
\end{enumerate}
\end{cor}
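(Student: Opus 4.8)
The plan is to reduce all three statements to Lemma~\ref{lem:identification r} and to the revelation principle (Proposition~\ref{prop:revelation}), by exhibiting $R^{ins}$ and $R^{ins}_0$ as the images of $r$ under a suitable bijection and under the aggregation map $k\mapsto\bar k$, respectively. First I would record the structural facts. By cash-invariance of $\rho$ and the definition of $r$, $\rho(\Lambda(Y)+\bar k)=\rho(\Lambda(Y))-\bar k=r(Y)-\bar k$, so $R^{ins}(Y)=\{k\in\R^d\,|\,\bar k\ge r(Y)\}$ is the closed half-space above the hyperplane $R^{ins}_0(Y)=\{k\in\R^d\,|\,\bar k=r(Y)\}$, as already observed before the corollary. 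Let $g\colon\R^d\to\R$, $g(k)=\bar k=\sum_{i=1}^d k_i$; this map is strictly increasing with respect to the componentwise order and satisfies $g^{-1}(\{r(Y)\})=R^{ins}_0(Y)$, i.e.\ $R^{ins}_0=g^{-1}\circ r$ in the notation of Proposition~\ref{prop:revelation}. Moreover every $B\in\A$ has the form $\{k\,|\,\bar k\ge c\}$ for a unique $c\in\R$, so $h:=\bar k_{min}\colon\A\to\R$ is a bijection with inverse $h^{-1}(c)=\{k\,|\,\bar k\ge c\}$, whence $R^{ins}=h^{-1}\circ r$. Finally, Lemma~\ref{lem:identification r} gives that $r$ is identifiable and elicitable with strict $\M^d$-identification function $V_r(x,y)=V_\rho(x,\Lambda(y))$ and strictly $\M^d$-consistent scoring function $S_r(x,y)=S_\rho(x,\Lambda(y))$, and that $V_r$ is oriented whenever $V_\rho$ is.

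For part (i): since $r$ is identifiable and elicitable, parts (i) and (ii) of Proposition~\ref{prop:revelation} applied to $\T=r$ and the bijection $h=\bar k_{min}$ show that $R^{ins}=h^{-1}\circ r$ is exhaustively identifiable and exhaustively elicitable, with strict exhaustive $\M^d$-identification function $(B,y)\mapsto V_r(\bar k_{min}(B),y)$ and strictly $\M^d$-consistent exhaustive scoring function $(B,y)\mapsto S_r(\bar k_{min}(B),y)$; inserting $V_r(x,y)=V_\rho(x,\Lambda(y))$ and $S_r(x,y)=S_\rho(x,\Lambda(y))$ from Lemma~\ref{lem:identification r} yields the announced functions $(B,y)\mapsto V_\rho(\bar k_{min}(B),\Lambda(y))$ and $(B,y)\mapsto S_\rho(\bar k_{min}(B),\Lambda(y))$.

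For part (iii): part (iii) of Proposition~\ref{prop:revelation} applied to $\T=r$, $S=S_r$ and the map $g$ shows that $R^{ins}_0=g^{-1}\circ r$ is selectively elicitable with strictly $\M^d$-consistent selective scoring function $(k,y)\mapsto S_r(\bar k,y)=S_\rho(\bar k,\Lambda(y))$. For the identification part of (ii): part (iv) of Proposition~\ref{prop:revelation} likewise shows $R^{ins}_0$ is selectively identifiable with strict selective $\M^d$-identification function $(k,y)\mapsto V_r(\bar k,y)=V_\rho(\bar k,\Lambda(y))$. If in addition $V_\rho$, and hence $V_r$, is oriented, then, $g$ being strictly increasing with respect to the componentwise order, part (v) of Proposition~\ref{prop:revelation} yields the three-case description of $\bar{V}_{R^{ins}_0}$ in terms of the sets $g^{-1}(\{r(F)\})\pm\R^d_+$. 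It then remains to rewrite those sets: from $g^{-1}(\{r(F)\})=R^{ins}_0(F)=\{k\,|\,\bar k=r(F)\}$ one checks $(g^{-1}(\{r(F)\})-\R^d_+)\setminus g^{-1}(\{r(F)\})=\{k\,|\,\bar k<r(F)\}=\R^d\setminus R^{ins}(F)$ and $(g^{-1}(\{r(F)\})+\R^d_+)\setminus g^{-1}(\{r(F)\})=\{k\,|\,\bar k>r(F)\}=R^{ins}(F)\setminus R^{ins}_0(F)$, which is exactly the claimed orientation.

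The only steps that are not a direct appeal to the two cited results are the elementary computation pinning down $R^{ins}(Y)$ and $R^{ins}_0(Y)$ via cash-invariance together with the bijectivity of $\bar k_{min}$ on $\A$, and the set manipulations translating the abstract orientation of Proposition~\ref{prop:revelation}(v) into the concrete statement about $R^{ins}$ and $R^{ins}_0$. I expect the latter to be the (mild) main obstacle, as one has to track carefully the direction of the componentwise order and of the inequality defining the half-space; everything else is routine bookkeeping.
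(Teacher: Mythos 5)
Your proof is correct and takes exactly the route the paper intends: the paper's own proof is the one-liner ``This is a direct consequence of Proposition~\ref{prop:revelation} and Lemma~\ref{lem:identification r},'' and you have simply filled in the bookkeeping (choosing $g(k)=\bar k$ and $h=\bar k_{\min}$, applying Lemma~\ref{lem:identification r} to obtain $V_r$ and $S_r$, and translating the abstract sets in Proposition~\ref{prop:revelation}(v) into the half-space description of $R^{ins}$). One minor remark: you correctly obtain $V_\rho(\bar k_{\min}(B),\Lambda(y))$ and $S_\rho(\bar k_{\min}(B),\Lambda(y))$ in part (i), which is what type-checking against $V_\rho,S_\rho\colon\R\times\R\to\R$ demands; the corollary's display omits the $\Lambda$, which appears to be a typo in the paper rather than a discrepancy in your argument.
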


\begin{proof}
This is a direct consequence of Proposition \ref{prop:revelation} and Lemma \ref{lem:identification r}.
\end{proof}

\begin{prop}\label{prop:r0ins to rho}
Let $\r\colon\Y\to\R$ be a risk measure, $\Lambda\colon\R^d\to\R$ an aggregation function and $r\colon\Y^d\to \R$ as in \eqref{eq:r}. Assume that there exists a measurable right inverse $\eta\colon \Lambda(\R^d) \to\R^d$ such that $\Lambda\circ \eta = \mathrm{id}_\R$, and for any $X\in\Y$, $\eta(X)$ belongs to $\Y^d$. Then $\r$ is identifiable if and only if $r$ is identifiable. 
\end{prop}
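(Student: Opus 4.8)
The plan is as follows. The ``only if'' direction requires no new idea: if $\r$ is identifiable, then by construction it admits a strict $\M$-identification function $V_\r$, and Lemma~\ref{lem:identification r}(i) applies verbatim (its hypothesis being exactly that $\r$ is identifiable and elicitable, but only the identification part is needed here) to conclude that $r$ is identifiable, with $V_r(x,y)=V_\r(x,\Lambda(y))$. So the entire content of the statement lies in the converse, and the role of the measurable right inverse $\eta$ is precisely to undo the pre-composition with $\Lambda$ that appears in Lemma~\ref{lem:identification r}.

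For the ``if'' direction, I would assume $r$ is identifiable and fix a strict $\M^d$-identification function $V_r\colon\R\times\R^d\to\R$. The idea is to transport it back along $\eta$ by defining
\[
V_\r\colon\R\times\R\to\R,\qquad (x,z)\mapsto\begin{cases} V_r\bigl(x,\eta(z)\bigr), & z\in\Lambda(\R^d),\\[2pt] 0, & \text{otherwise,}\end{cases}
\]
and then checking that $V_\r$ is a strict $\M$-identification function for $\r$. Measurability of $z\mapsto V_\r(x,z)$ is clear, since $\Lambda(\R^d)$ is Borel (it is an interval for continuous $\Lambda$), $\eta$ is measurable by hypothesis, and $V_r(x,\cdot)$ is Borel measurable.

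The verification then amounts to a change-of-variables argument. I would fix $F\in\M$, pick $X\in\Y$ with $\dist(X)=F$ (which takes values in $\Lambda(\R^d)$, the domain of $\eta$), set $G=\dist(\eta(X))$, and observe that $\eta(X)\in\Y^d$ by assumption, so $G\in\M^d$. Since $\Lambda\circ\eta=\mathrm{id}_\R$, we get $\Lambda(\eta(X))=X$ a.s.\ and hence $r(G)=\r(\Lambda(\eta(X)))=\r(X)=\r(F)$. Pushing the distribution forward, for every $x\in\R$,
\[
\bar V_\r(x,F)=\E\bigl[V_\r(x,X)\bigr]=\E\bigl[V_r(x,\eta(X))\bigr]=\bar V_r(x,G),
\]
which is well defined and, by strictness of $V_r$, equals $0$ if and only if $x=r(G)=\r(F)$. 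Thus $V_\r$ is a strict $\M$-identification function and $\r$ is identifiable.

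I expect the only genuinely delicate point to be the bookkeeping around the domain of $\eta$: one must ensure $V_\r$ is well defined and measurable as a map $\R\times\R\to\R$ (handled by the extension above and by noting that distributions in $\M$ are supported on $\Lambda(\R^d)$), and, more substantively, that the pushforward $G=\dist(\eta(X))$ actually belongs to the class $\M^d$ on which $V_r$ is known to identify $r$ — this is exactly where the hypothesis ``$\eta(X)\in\Y^d$ for all $X\in\Y$'' is indispensable. Once that is in place, everything else is routine and symmetric to the proof of Lemma~\ref{lem:identification r}.
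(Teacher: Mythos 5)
Your proof is correct and follows exactly the strategy the paper alludes to (the paper merely cites the analogous Proposition~3.7 of the main text, whose argument is precisely this two-way construction: $V_\r\mapsto V_\r(\cdot,\Lambda(\cdot))$ for one direction and the change of variables via the measurable right inverse $\eta$ for the other). The only caveat worth stating explicitly is the one you already flag: the hypothesis that $\eta(X)\in\Y^d$ for all $X\in\Y$ implicitly forces every $X\in\Y$ to take values in $\Lambda(\R^d)$ a.s., which is what makes $V_\r$ well-defined on $\M$; once that is in place, the identity $\bar V_\r(x,F)=\bar V_r(x,G)$ with $G=\dist(\eta(X))\in\M^d$ and $r(G)=\r(F)$ settles strictness.
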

\begin{proof}
The proof is analogous to the proof of Proposition 3.7 in \cite{FisslerHlavinovaRudloff_RM}.
\end{proof}

\section{Further simulation results}\label{app:simulations}
In Figures \ref{fig:Murphy_additional} and \ref{fig:Murphy_additional2}, we depict 4 more experiments as described in Subsection 6.2 of \cite{FisslerHlavinovaRudloff_RM}.

\begin{sidewaysfigure}
\begin{center}
\begin{tabular}{ c c c c c}
      \multicolumn{2}{c}{Experiment 2} & \hspace{40mm}&
			\multicolumn{2}{c}{Experiment 3}\\
      \includegraphics[width=4.0cm, height=4.0cm]{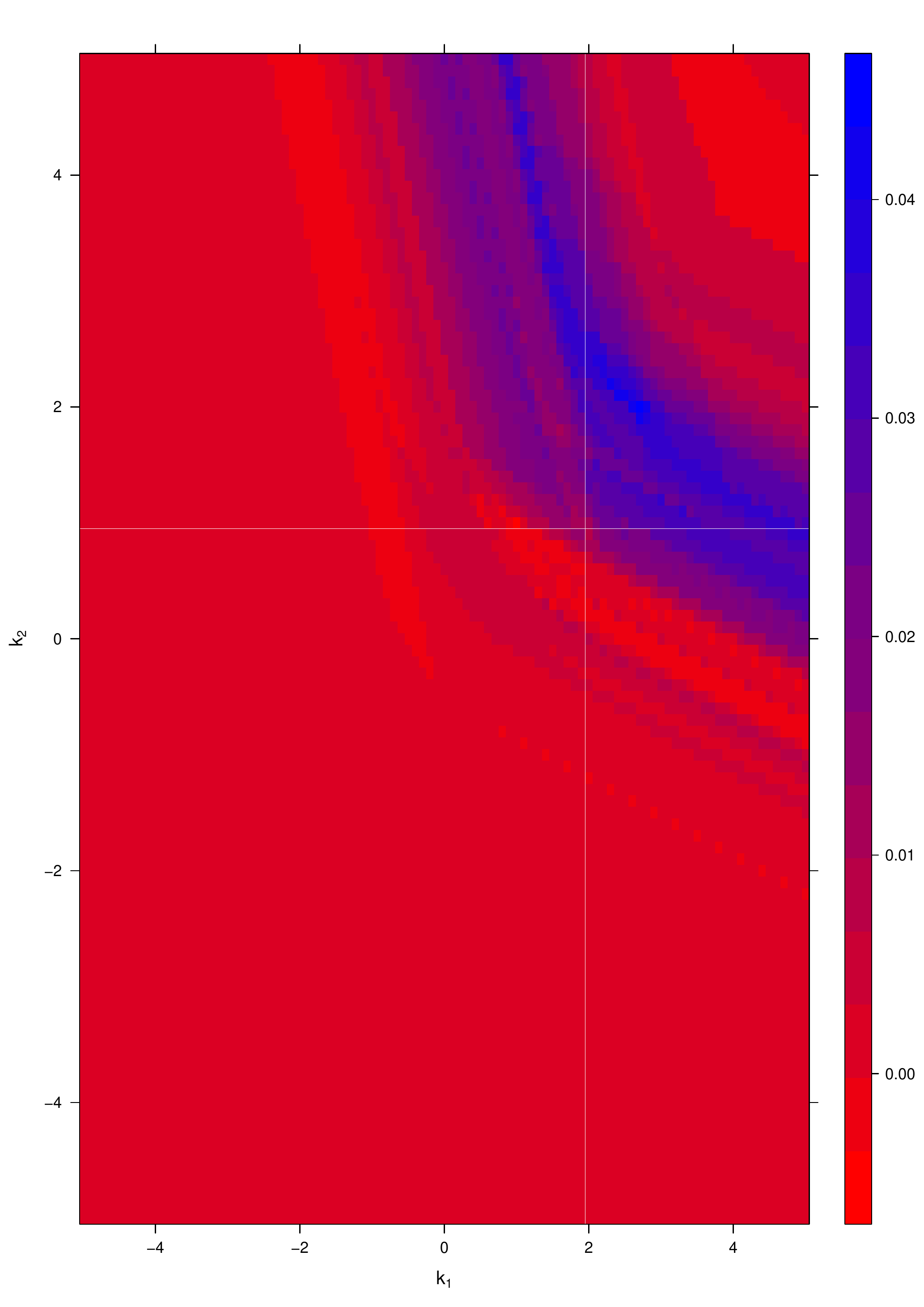} &
      \includegraphics[width=4.0cm, height=4.0cm]{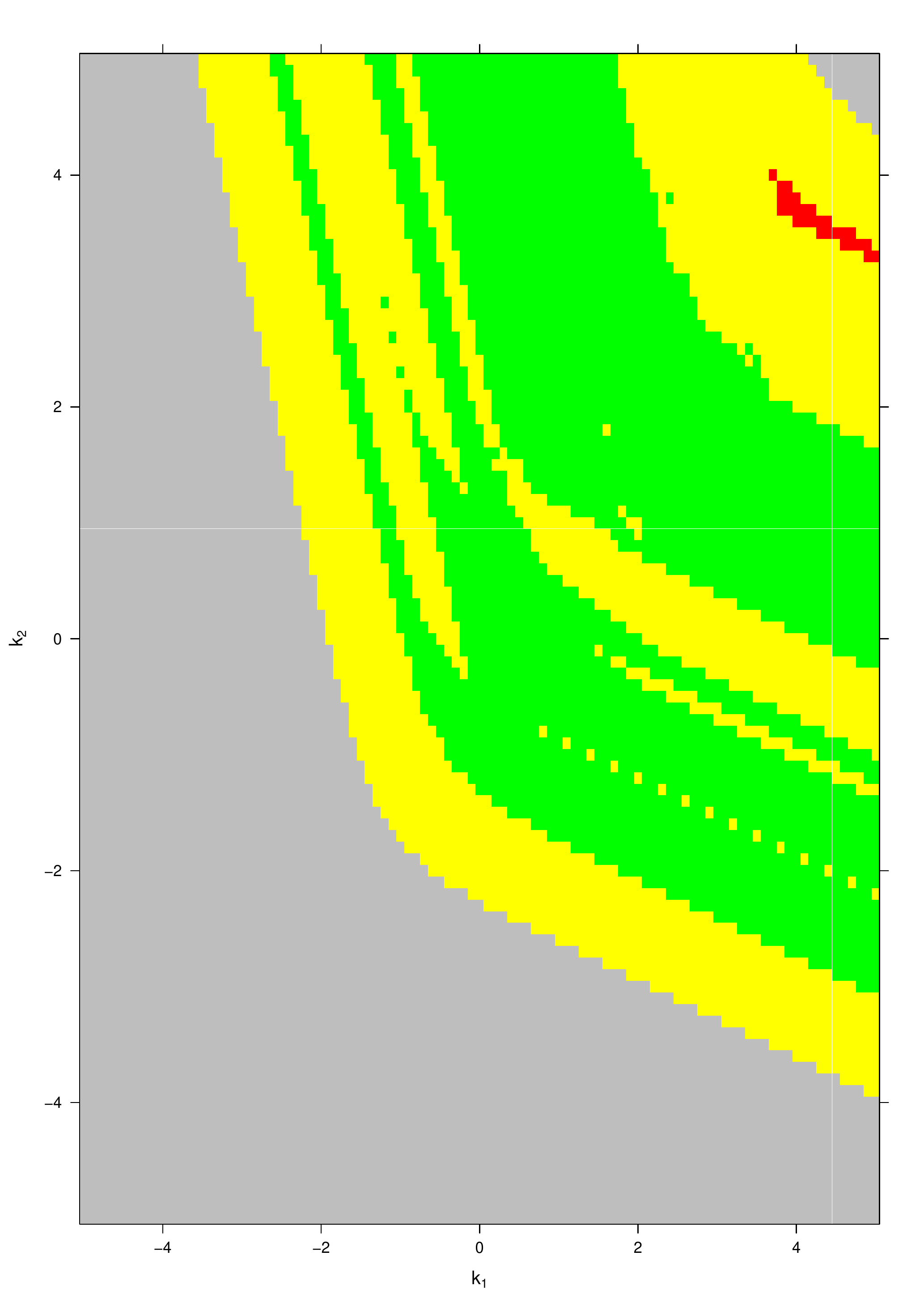} &\hspace{30mm}&
			\includegraphics[width=4.0cm, height=4.0cm]{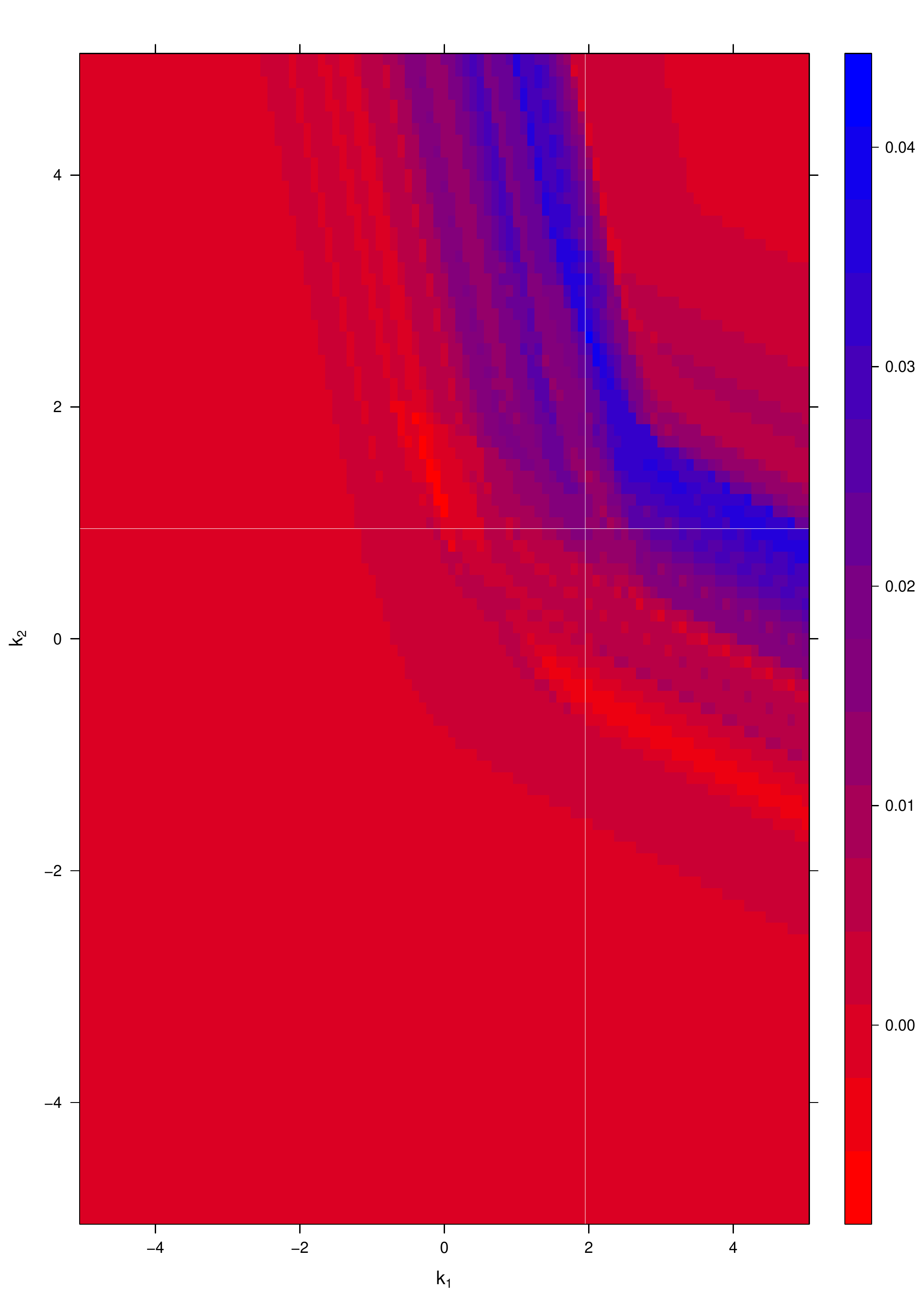} &
      \includegraphics[width=4.0cm, height=4.0cm]{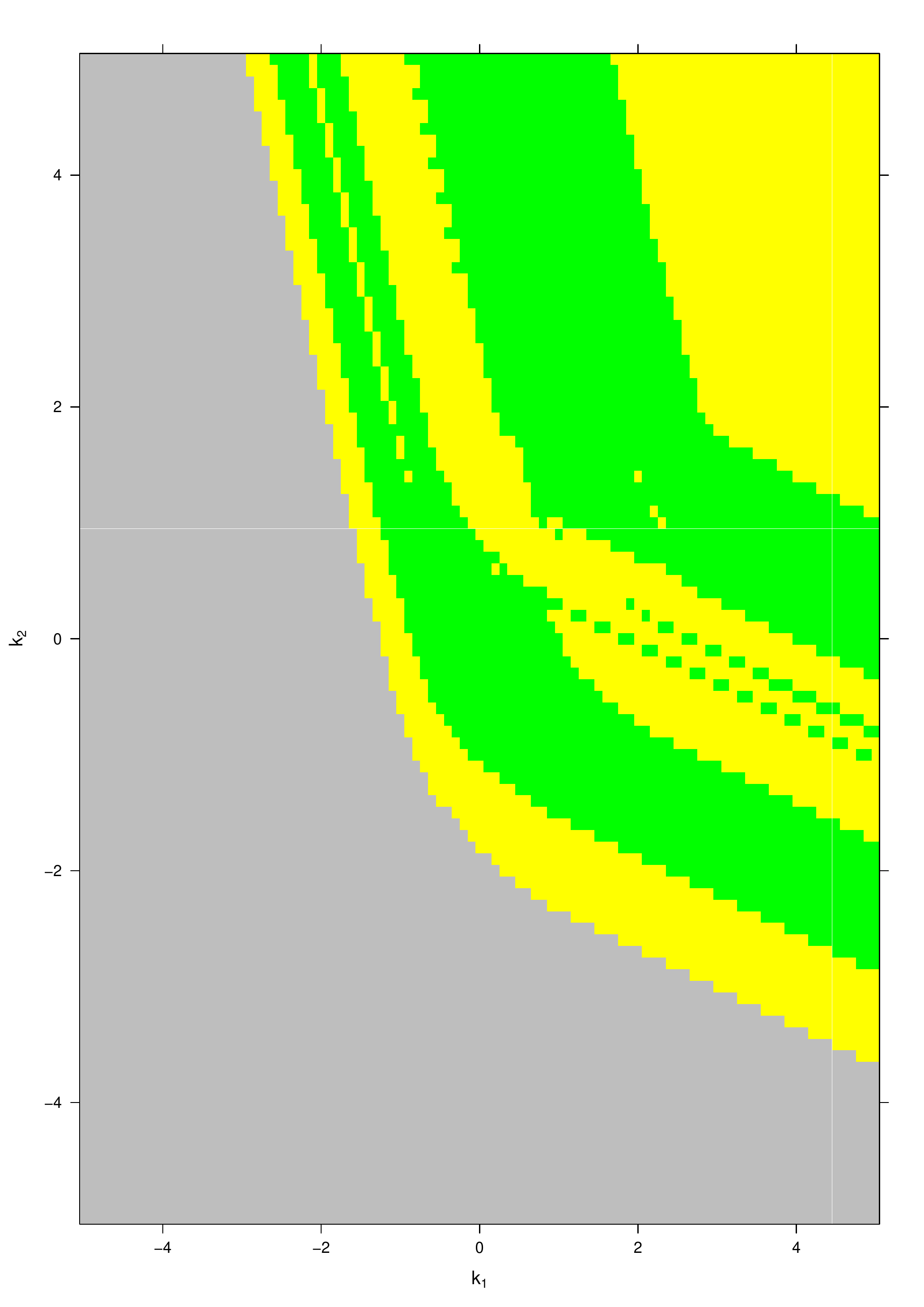}\\
			\multicolumn{2}{c}{\footnotesize $f_{1t}=B_t$, $f_{2t}=A_t$} &\hspace{30mm}&
			\multicolumn{2}{c}{\footnotesize $f_{1t}=B_t$, $f_{2t}=A_t$}\\
      \includegraphics[width=4.0cm, height=4.0cm]{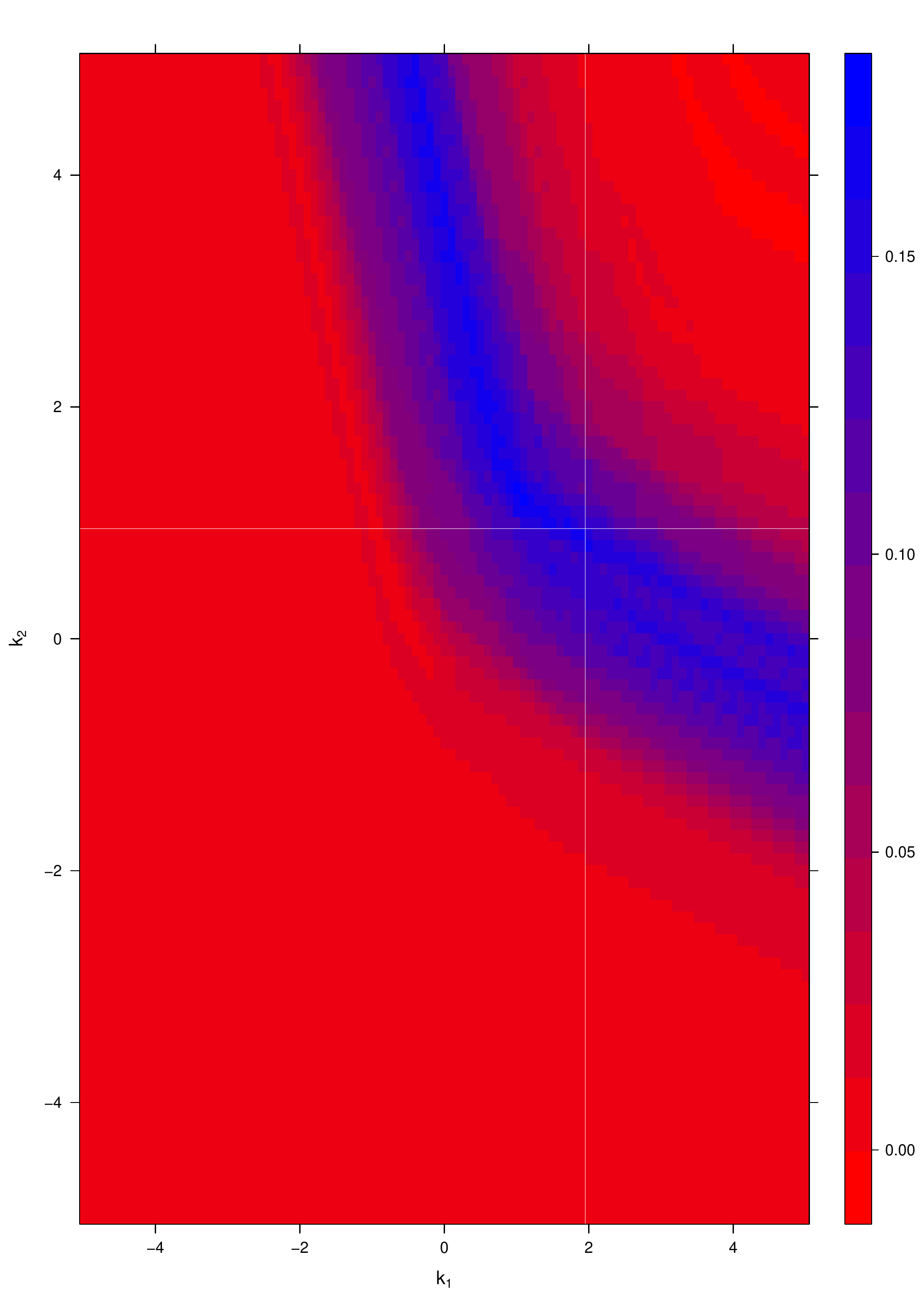} &
      \includegraphics[width=4.0cm, height=4.0cm]{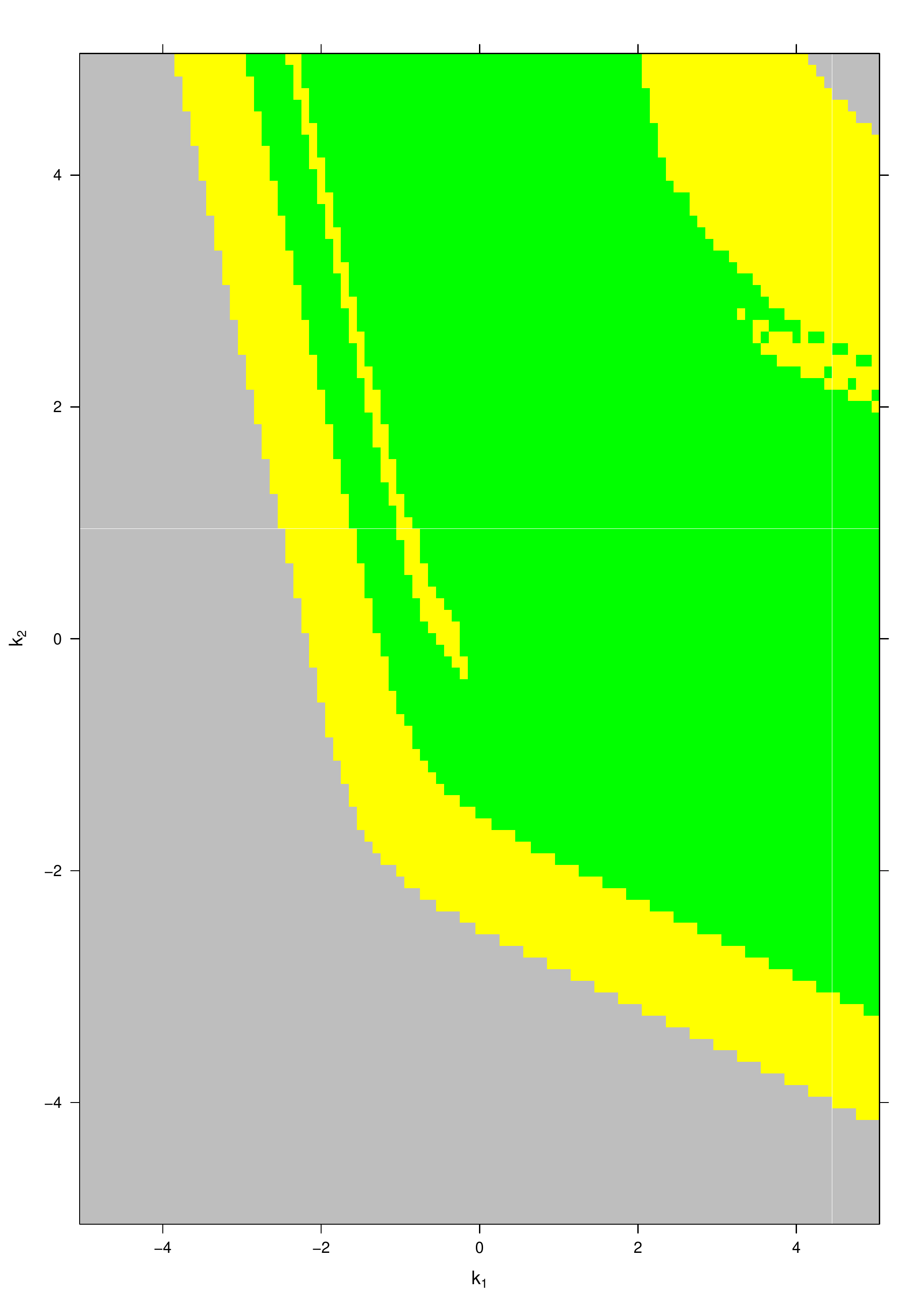} &\hspace{40mm}&
			\includegraphics[width=4.0cm, height=4.0cm]{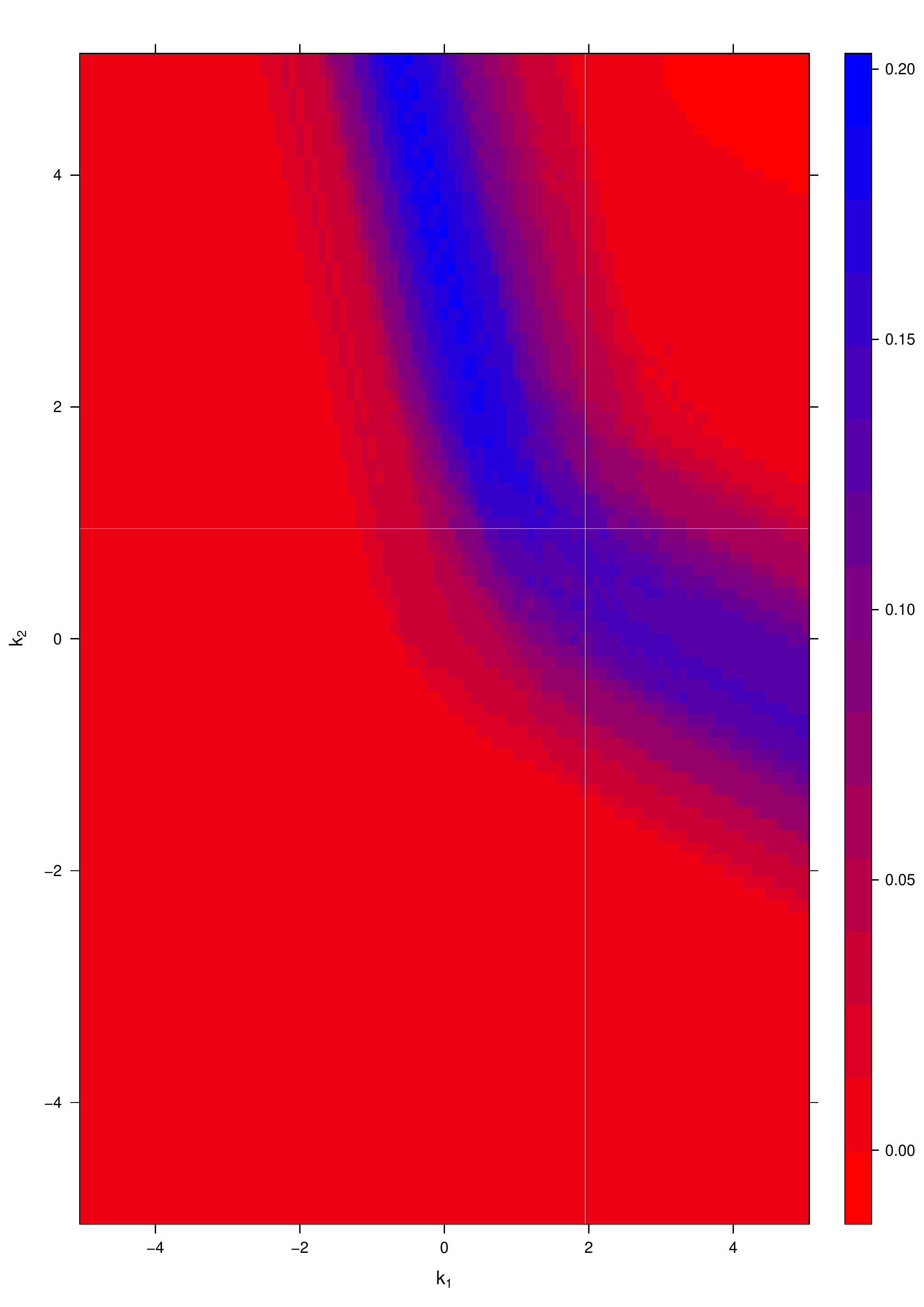} &
      \includegraphics[width=4.0cm, height=4.0cm]{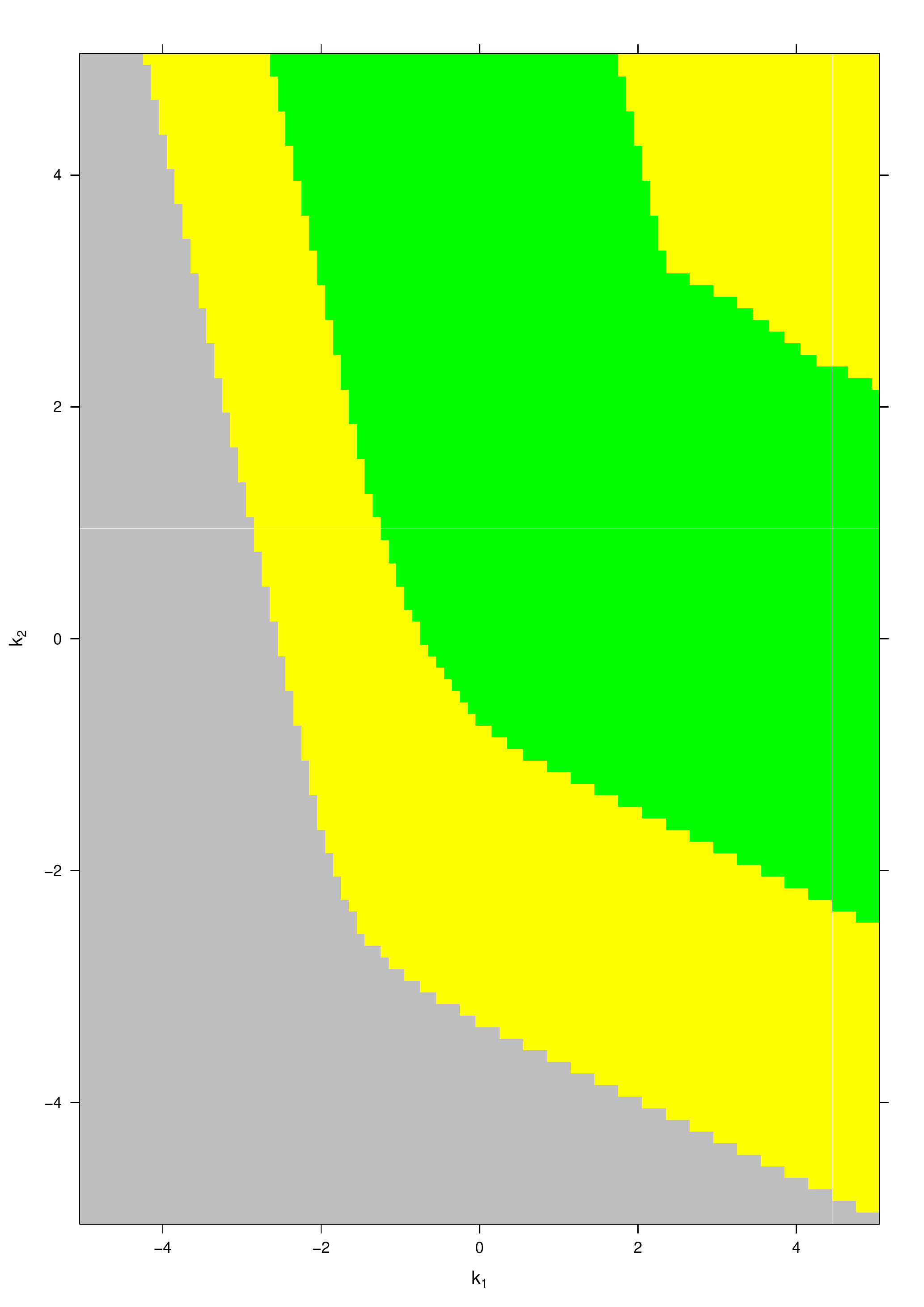}\\
      \multicolumn{2}{c}{\footnotesize $f_{1t}=C_t$, $f_{2t}=A_t$} &\hspace{40mm}&
			\multicolumn{2}{c}{\footnotesize $f_{1t}=C_t$, $f_{2t}=A_t$}\\
      \includegraphics[width=4.0cm, height=4.0cm]{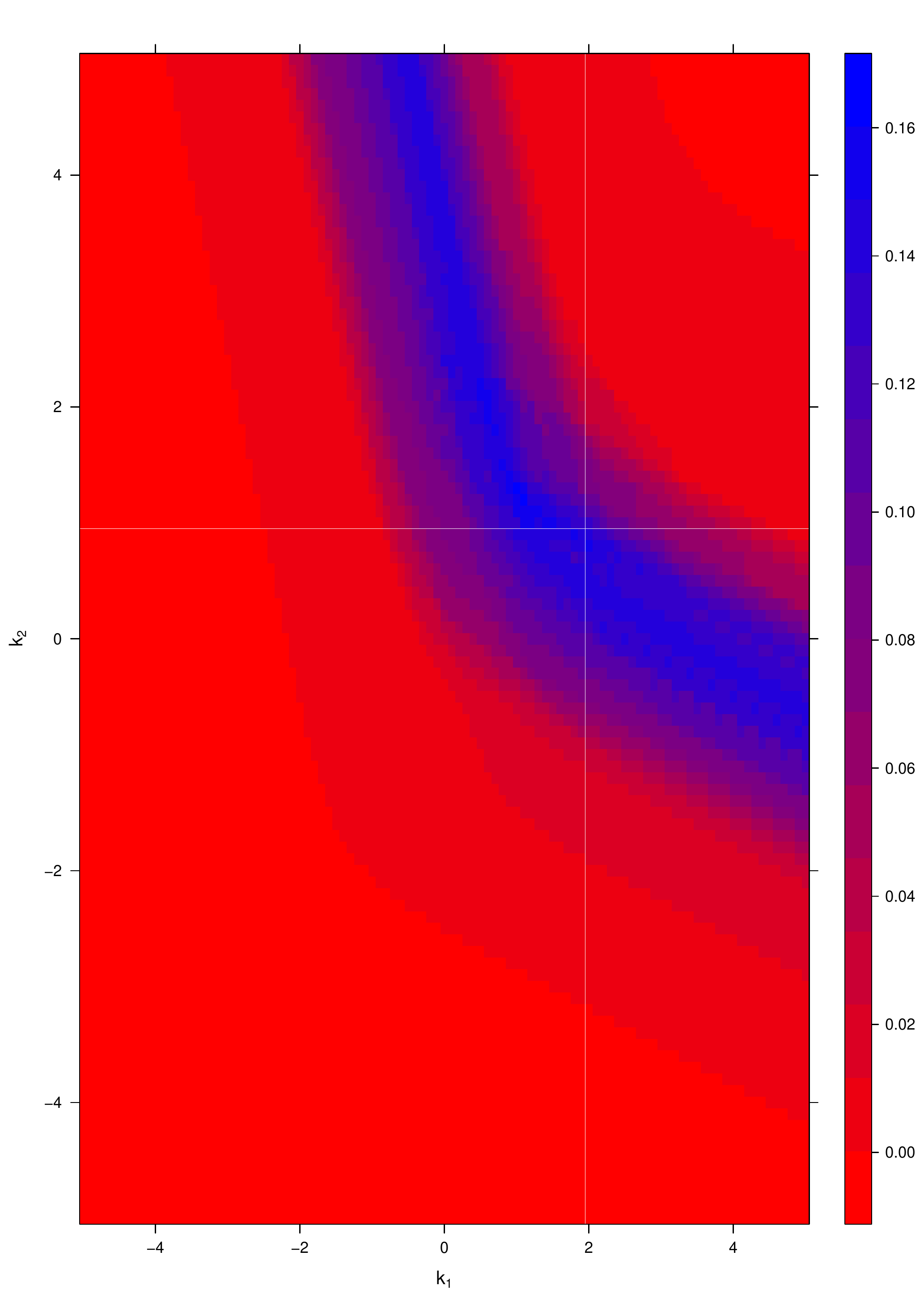} &
      \includegraphics[width=4.0cm, height=4.0cm]{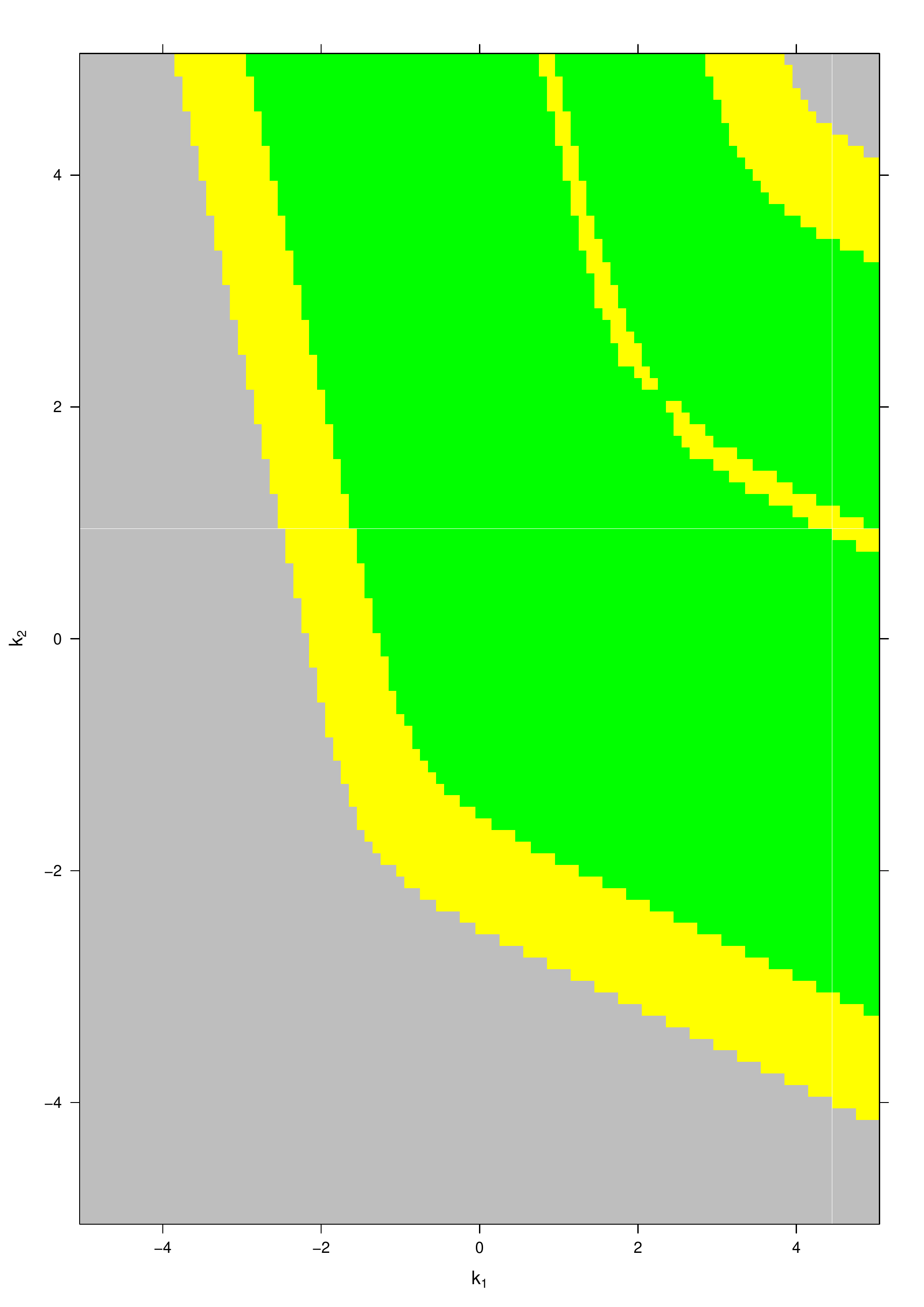} &\hspace{40mm}&
			\includegraphics[width=4.0cm, height=4.0cm]{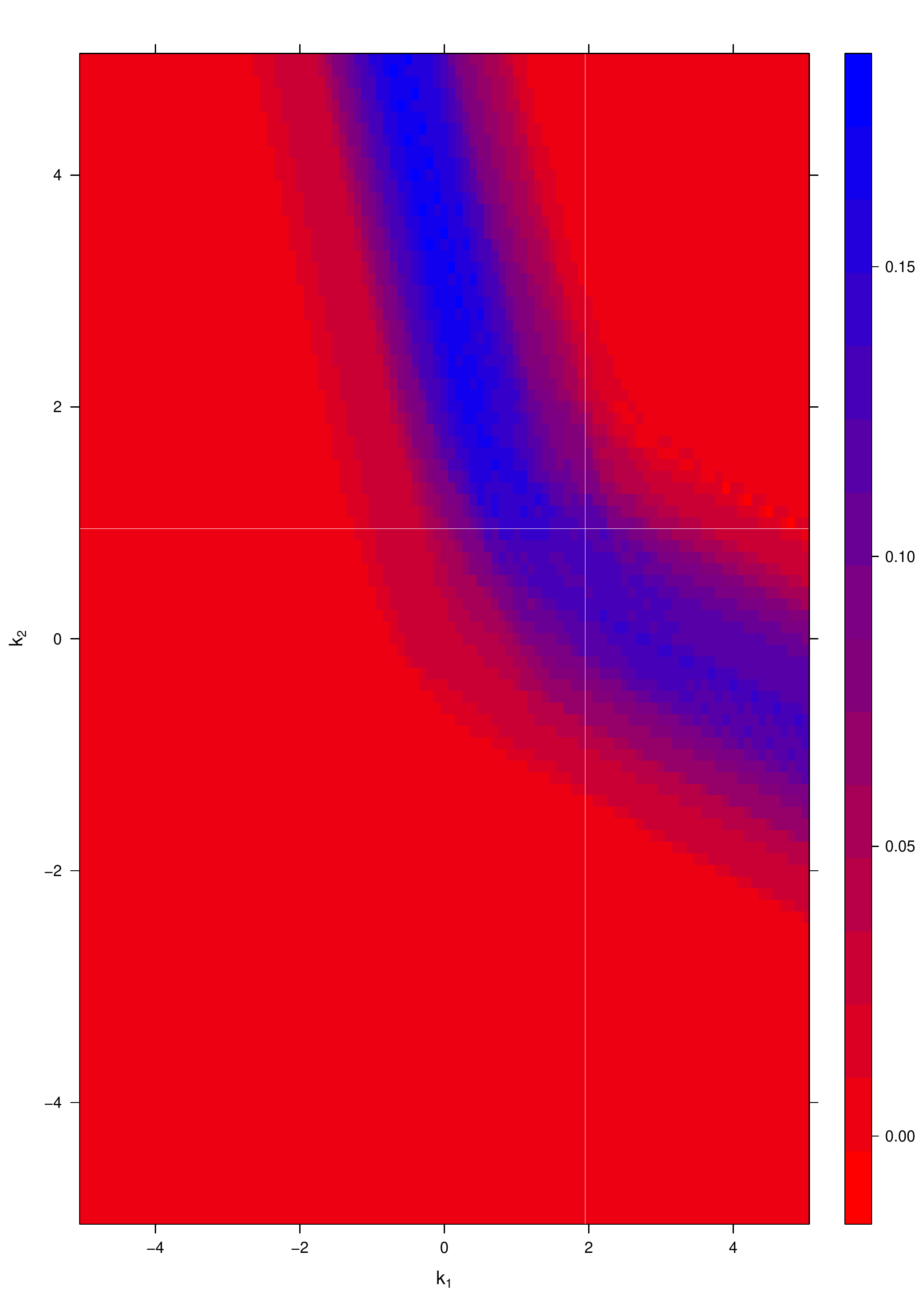} &
      \includegraphics[width=4.0cm, height=4.0cm]{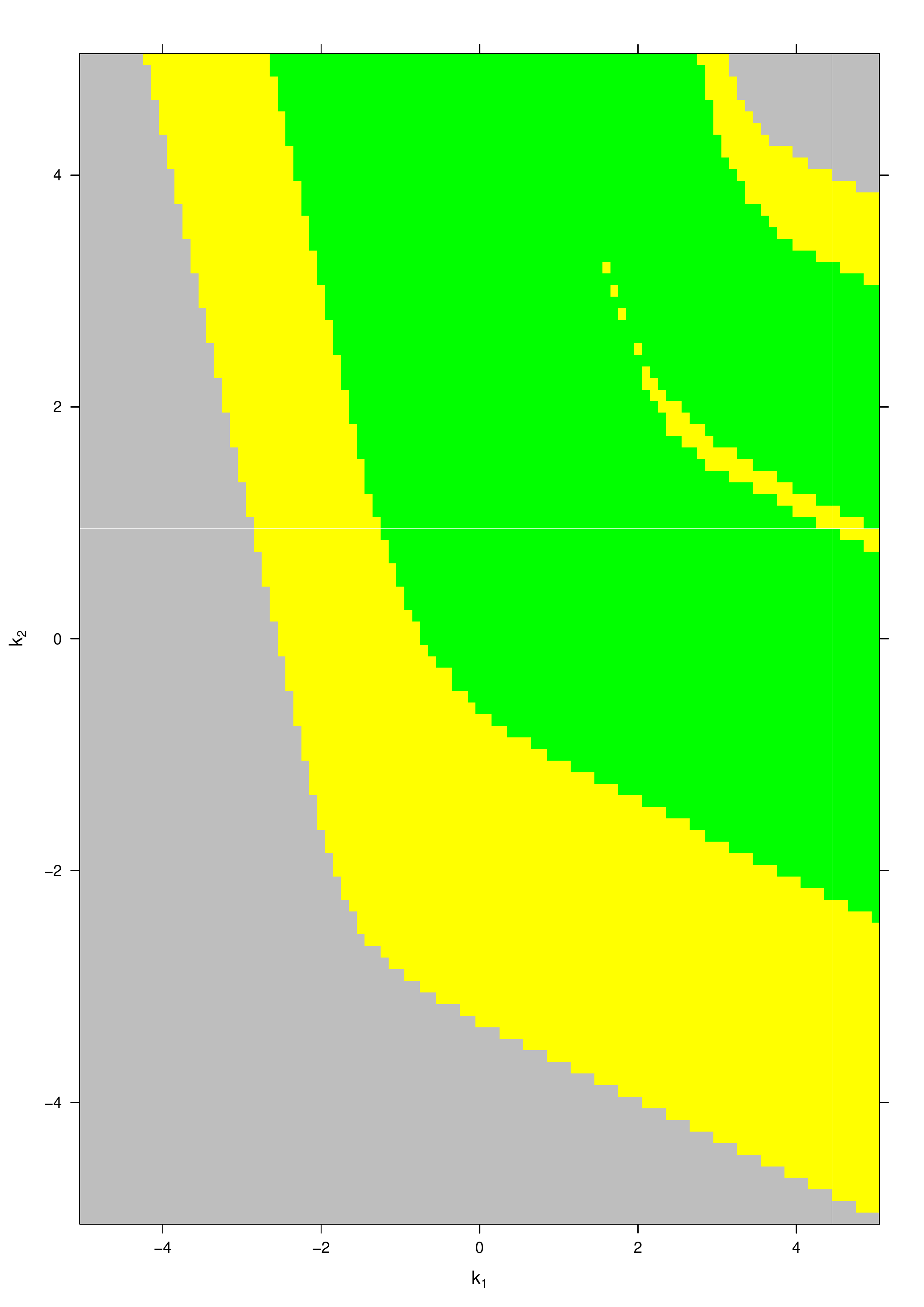} \\
			\multicolumn{2}{c}{\footnotesize $f_{1t}=C_t$, $f_{2t}=B_t$} &\hspace{40mm}&
			\multicolumn{2}{c}{\footnotesize $f_{1t}=C_t$, $f_{2t}=B_t$}
    \end{tabular}
\end{center}
\caption{See description of Figure 2 in \cite{FisslerHlavinovaRudloff_RM}.}
\label{fig:Murphy_additional}
\end{sidewaysfigure}

\begin{sidewaysfigure}
\begin{center}
\begin{tabular}{ c c c c c}
      \multicolumn{2}{c}{Experiment 4} & \hspace{40mm}&
			\multicolumn{2}{c}{Experiment 5}\\
      \includegraphics[width=4cm, height=4cm]{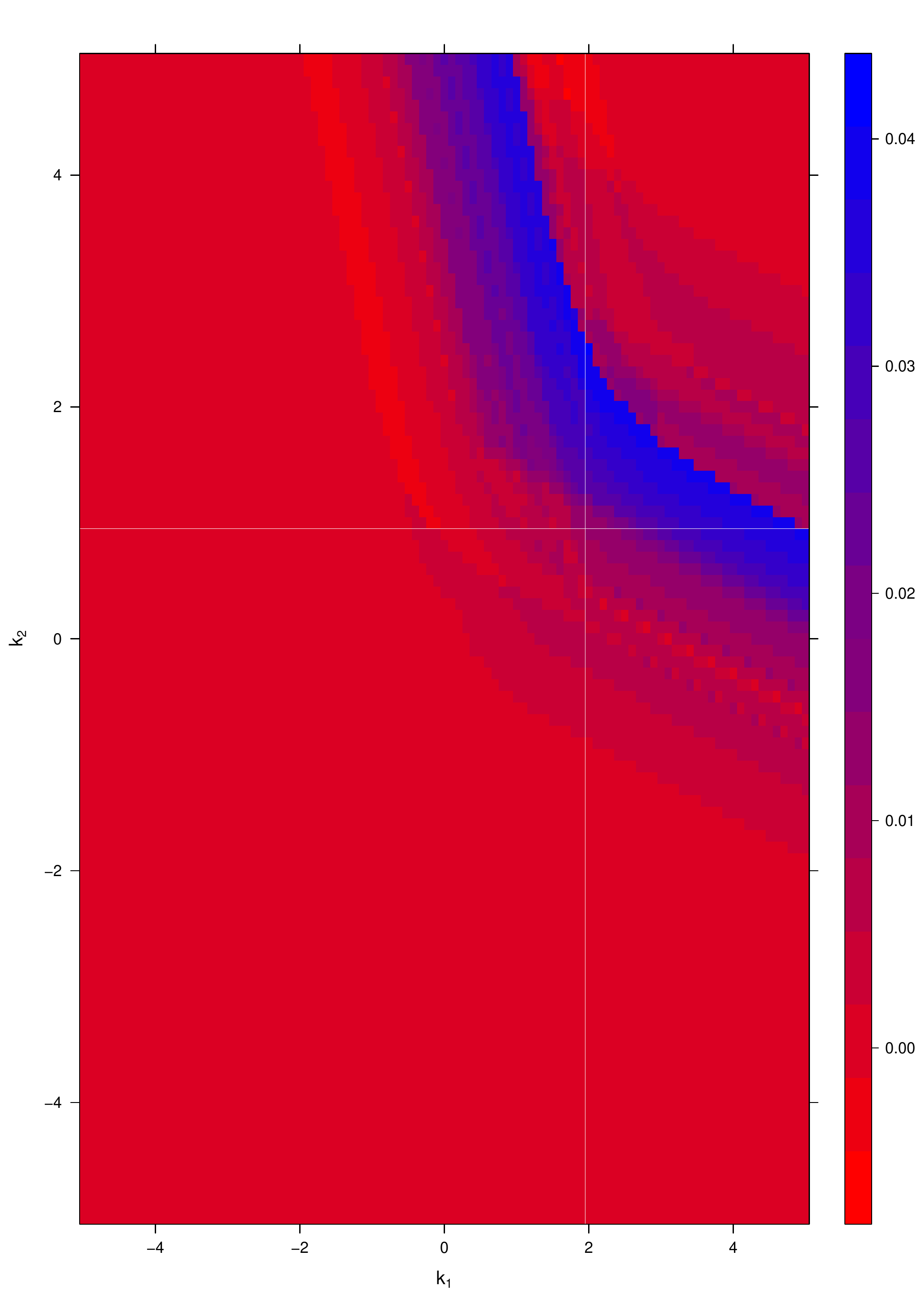} &
      \includegraphics[width=4cm, height=4cm]{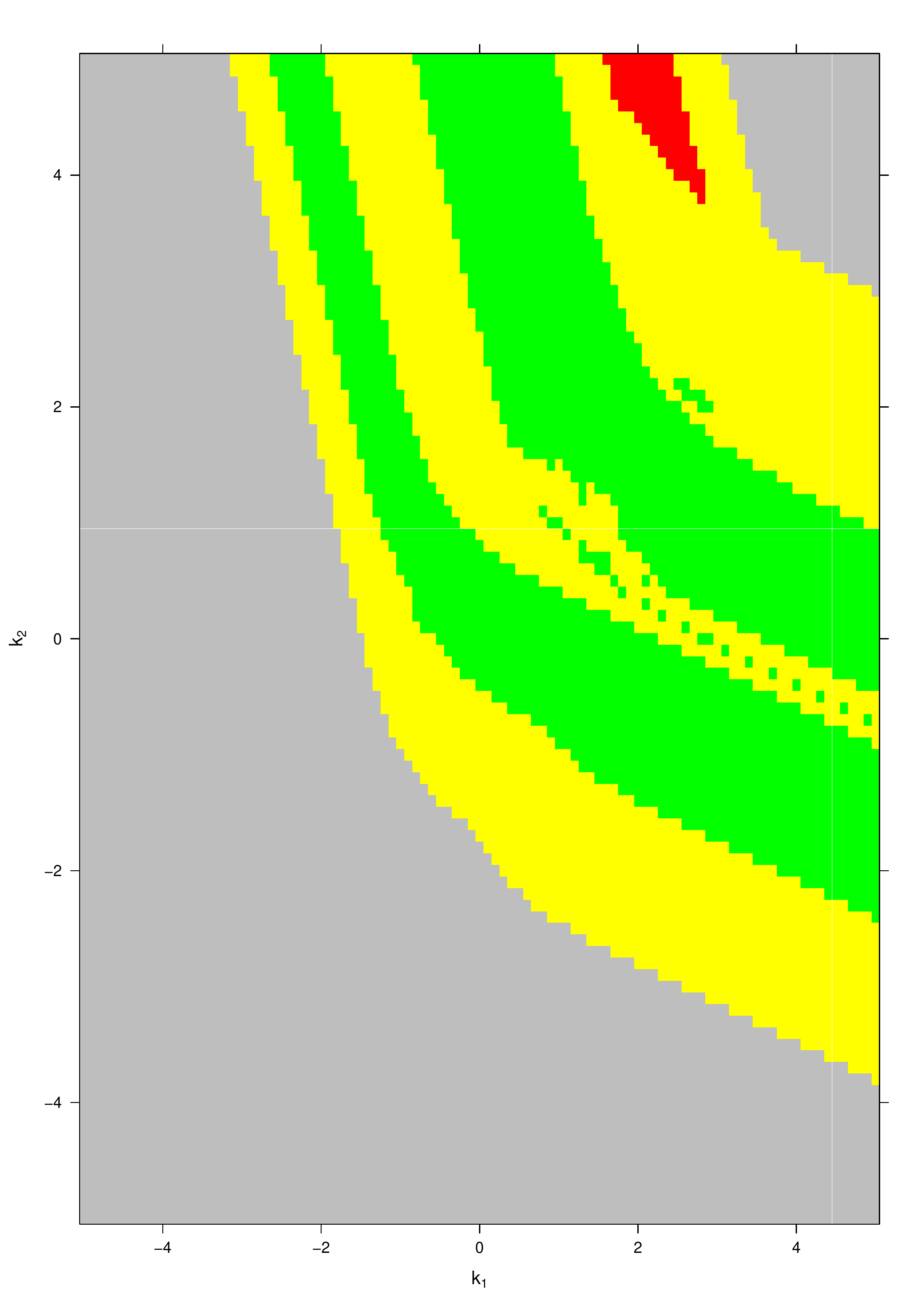} &\hspace{40mm}&
			\includegraphics[width=4cm, height=4cm]{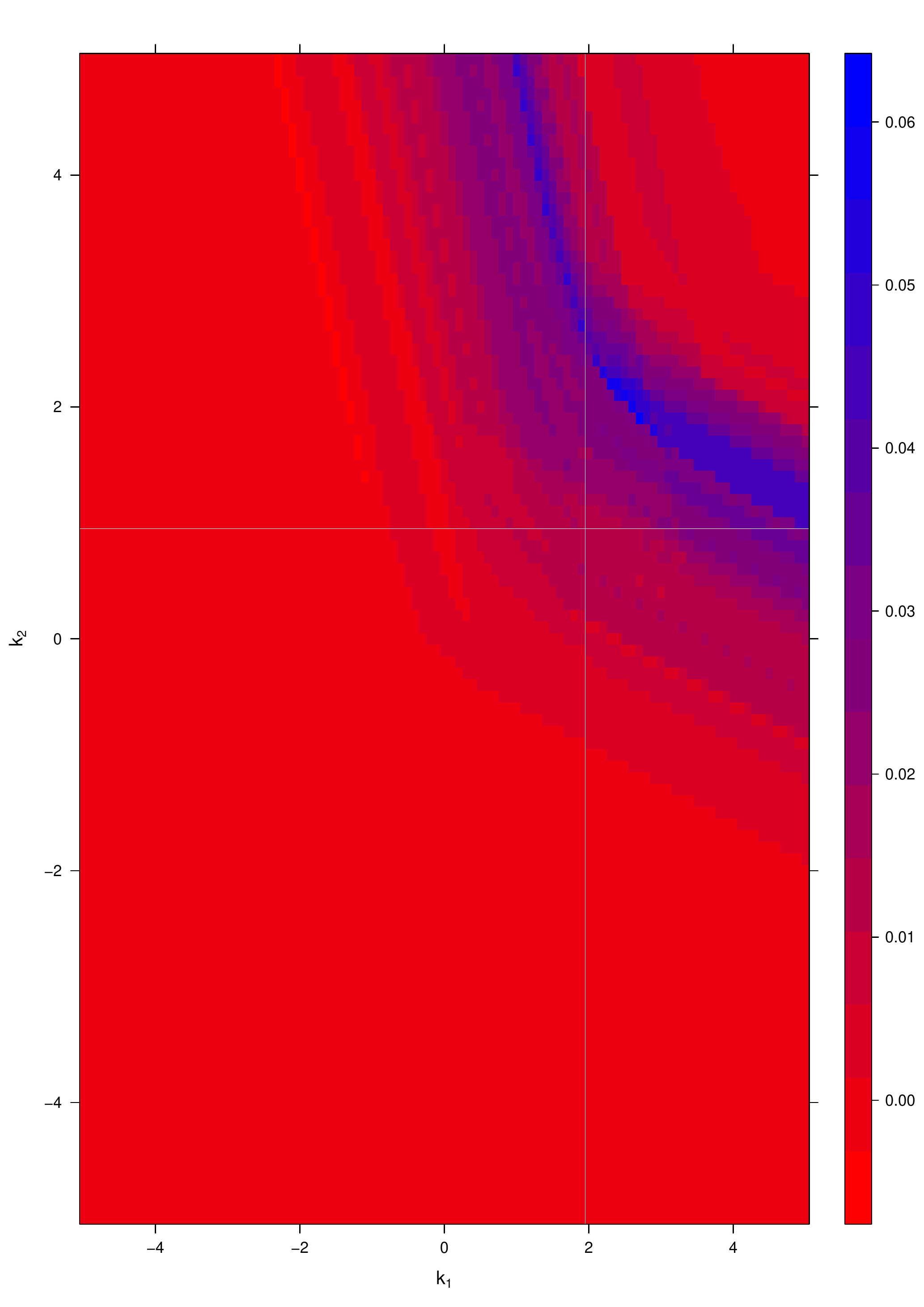} &
      \includegraphics[width=4cm, height=4cm]{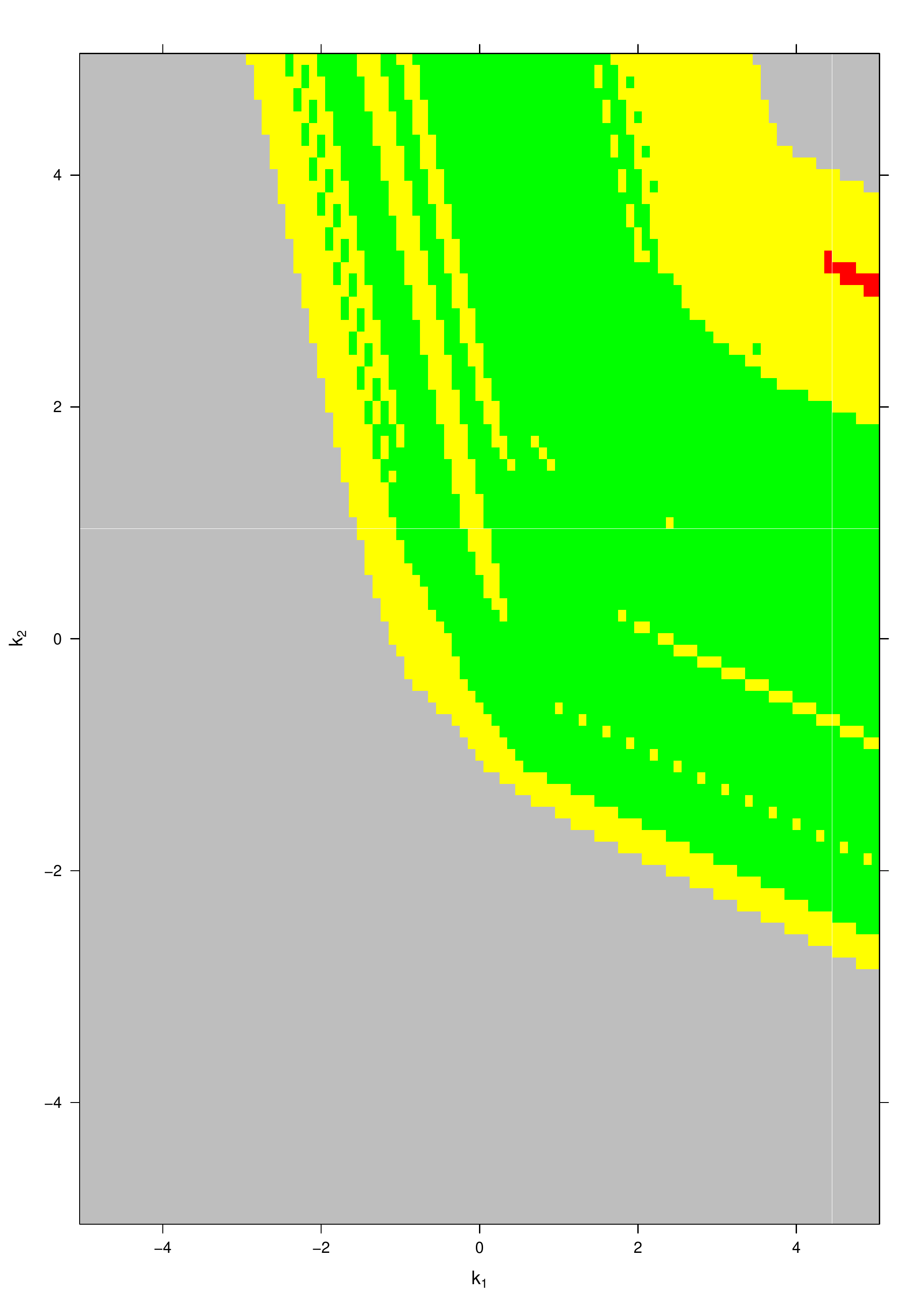}\\
			\multicolumn{2}{c}{\footnotesize $f_{1t}=B_t$, $f_{2t}=A_t$} &\hspace{40mm}&
			\multicolumn{2}{c}{\footnotesize $f_{1t}=B_t$, $f_{2t}=A_t$}\\
      \includegraphics[width=4cm, height=4cm]{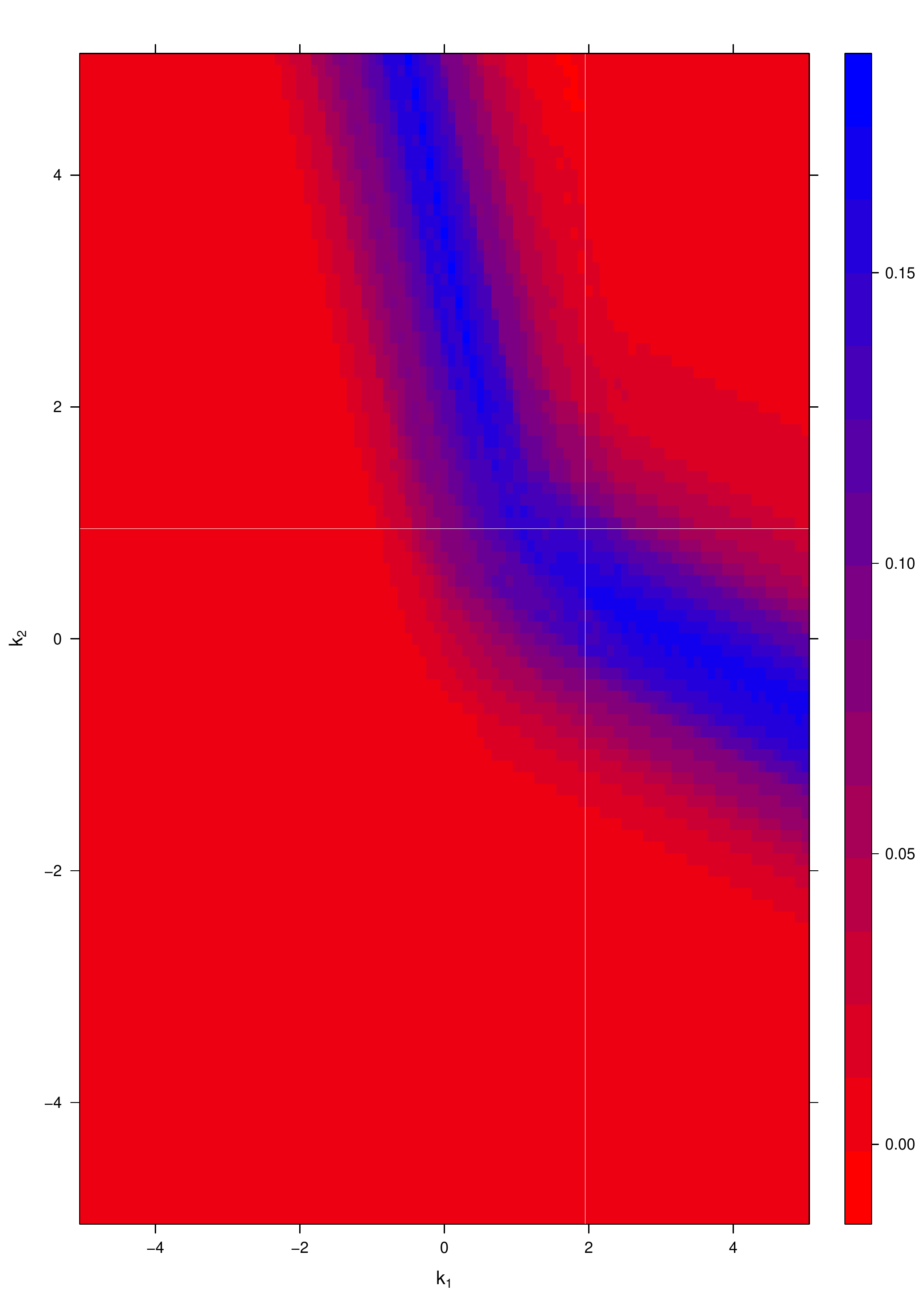} &
      \includegraphics[width=4cm, height=4cm]{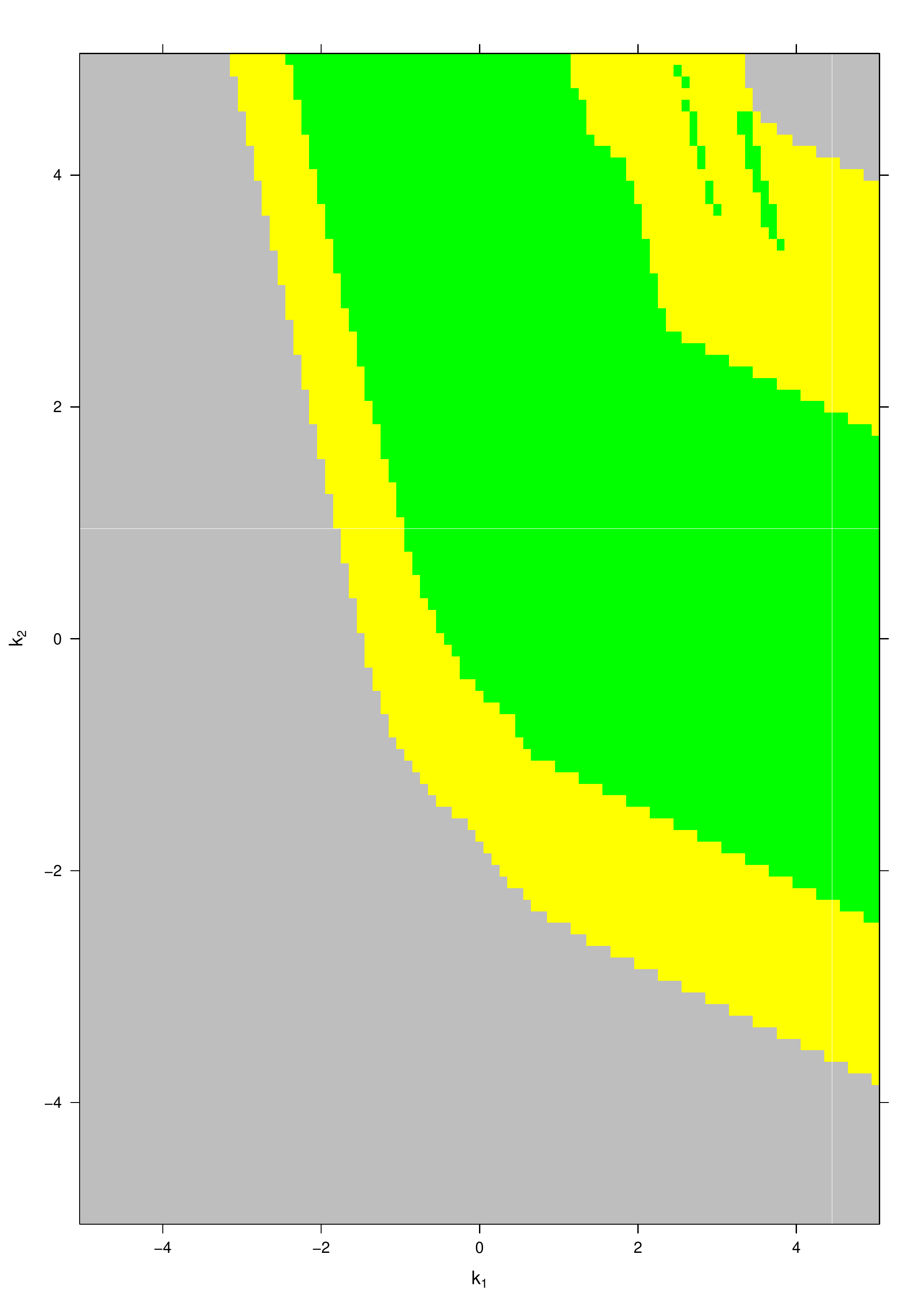} &\hspace{40mm}&
			\includegraphics[width=4cm, height=4cm]{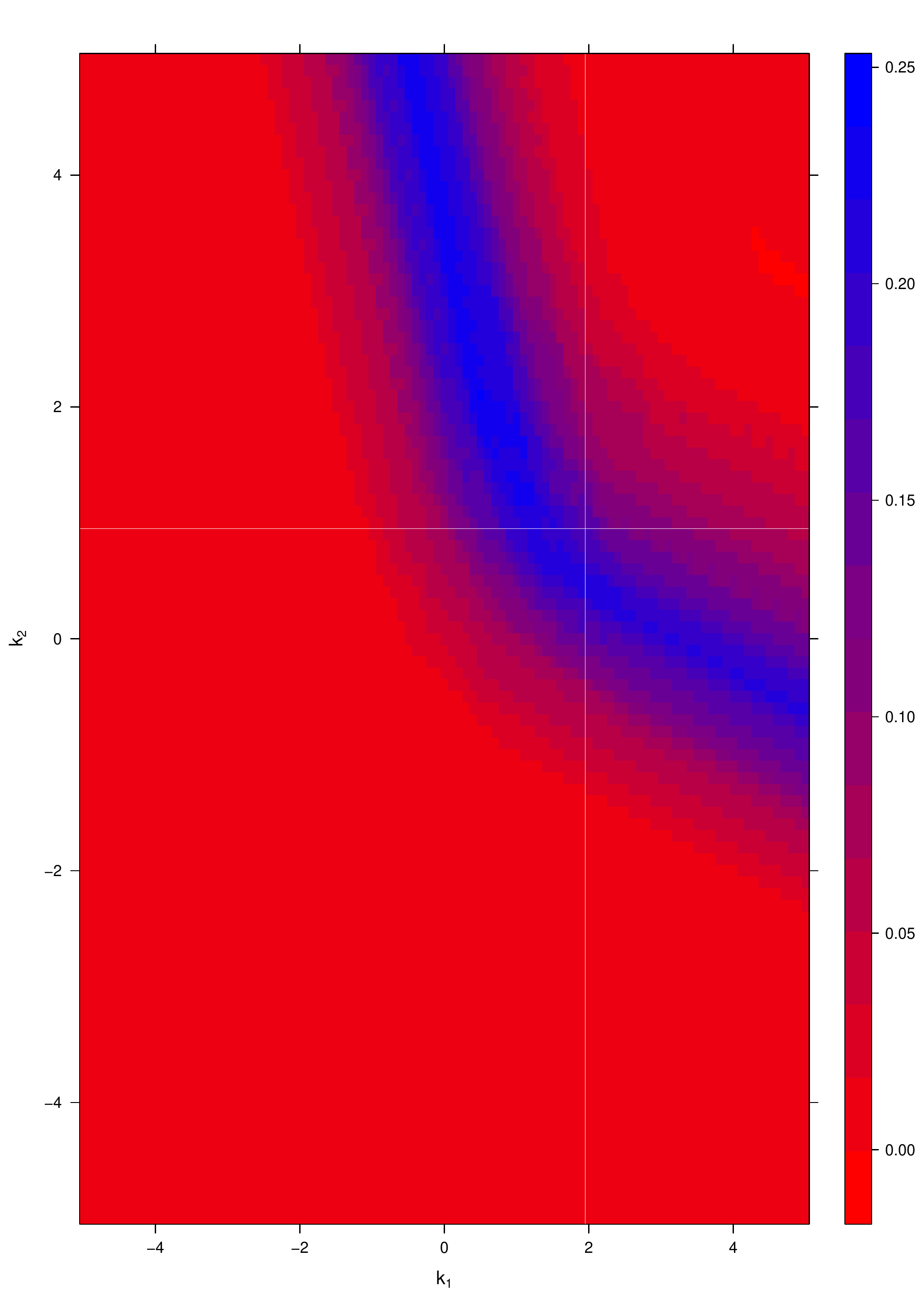} &
      \includegraphics[width=4cm, height=4cm]{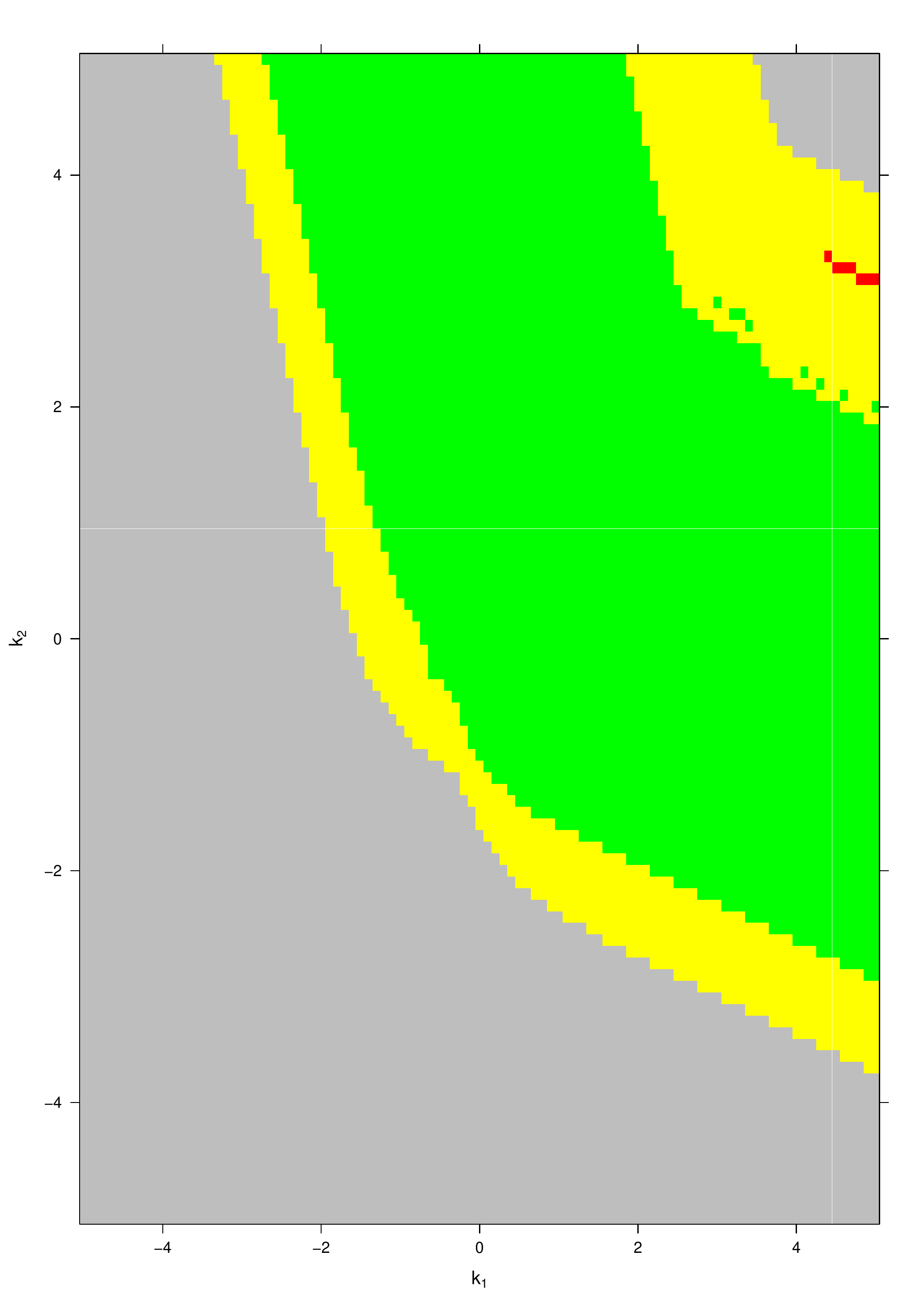}\\
      \multicolumn{2}{c}{\footnotesize $f_{1t}=C_t$, $f_{2t}=A_t$} &\hspace{40mm}&
			\multicolumn{2}{c}{\footnotesize $f_{1t}=C_t$, $f_{2t}=A_t$}\\
      \includegraphics[width=4cm, height=4cm]{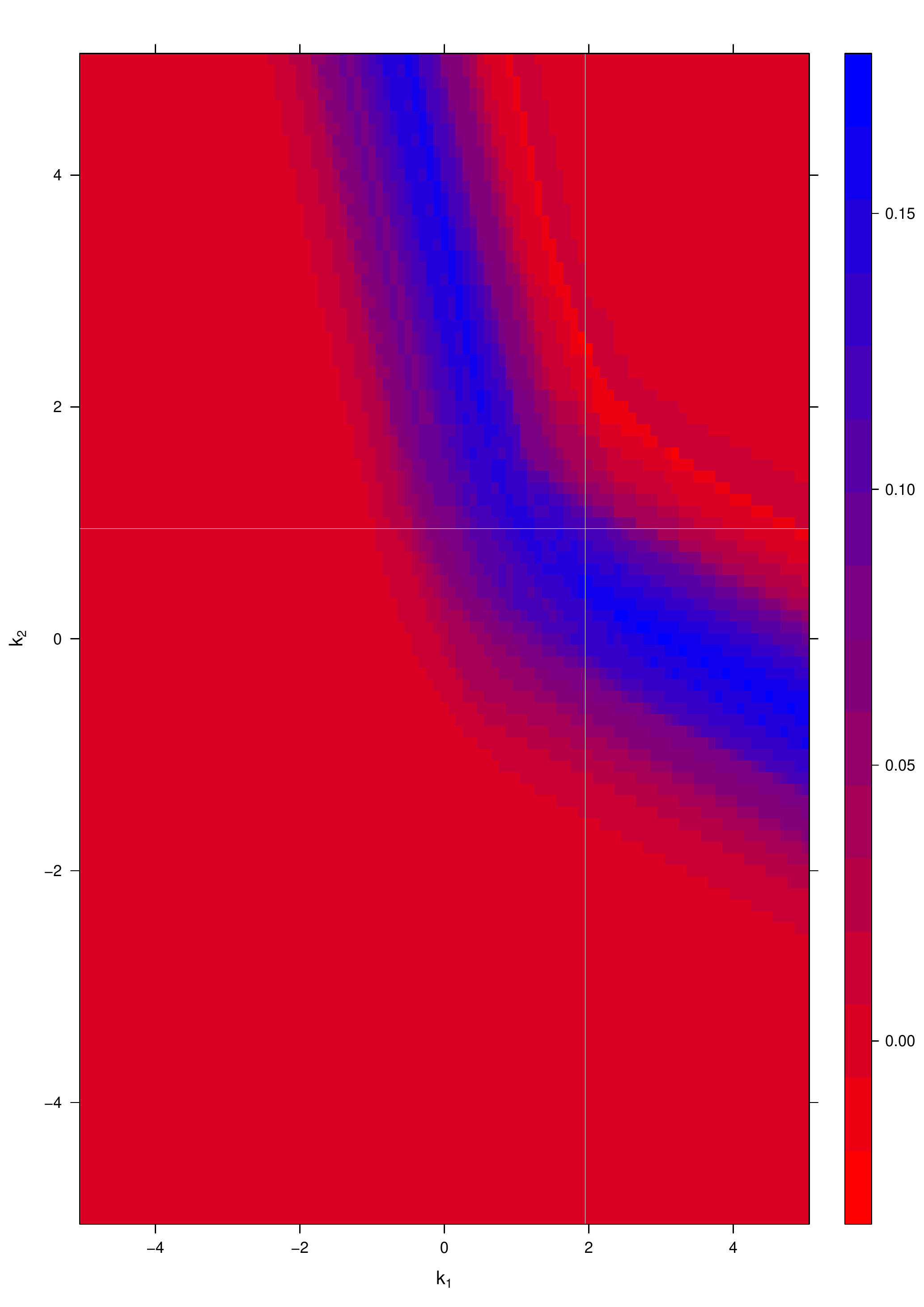} &
      \includegraphics[width=4cm, height=4cm]{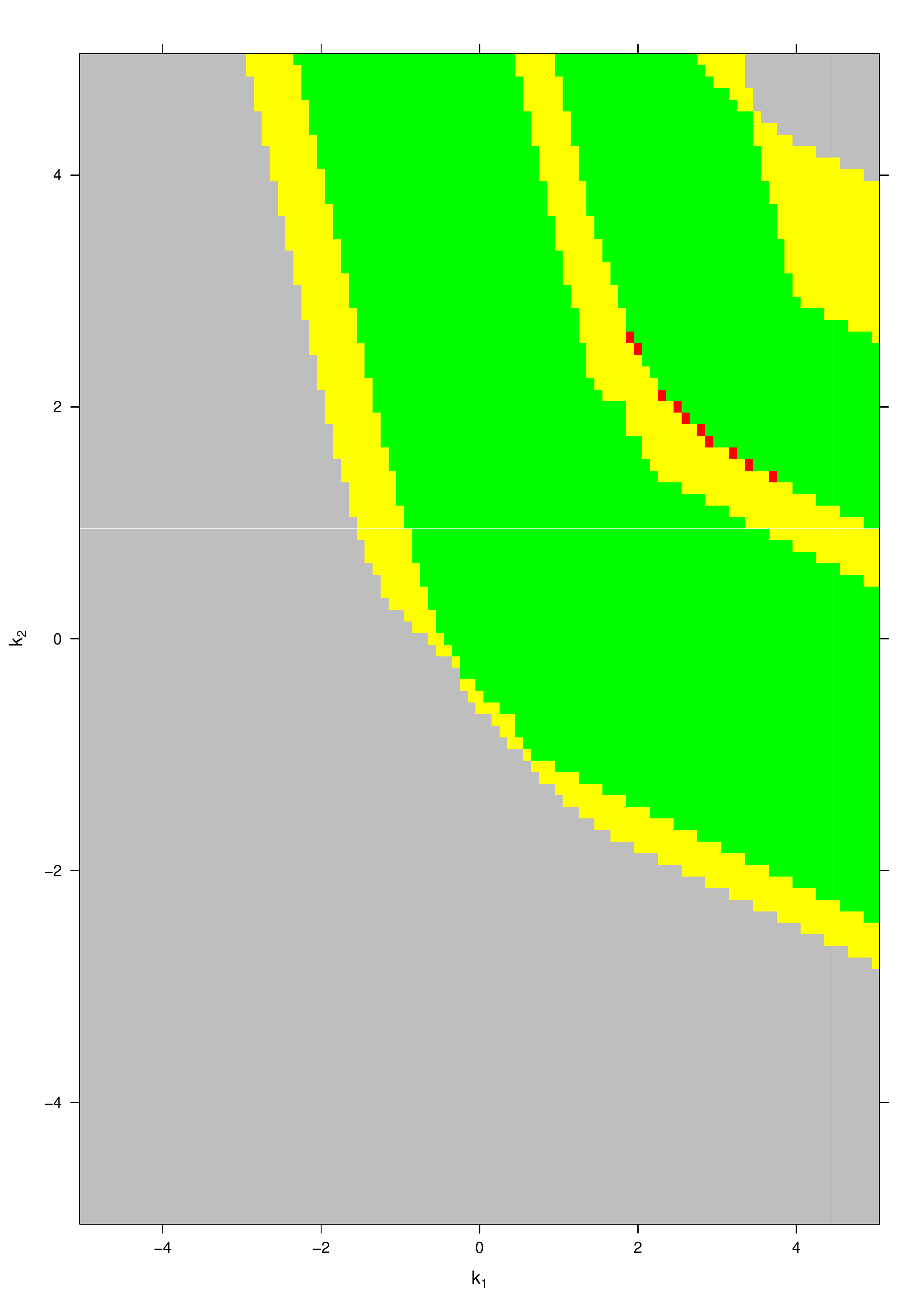} &\hspace{40mm}&
			\includegraphics[width=4cm, height=4cm]{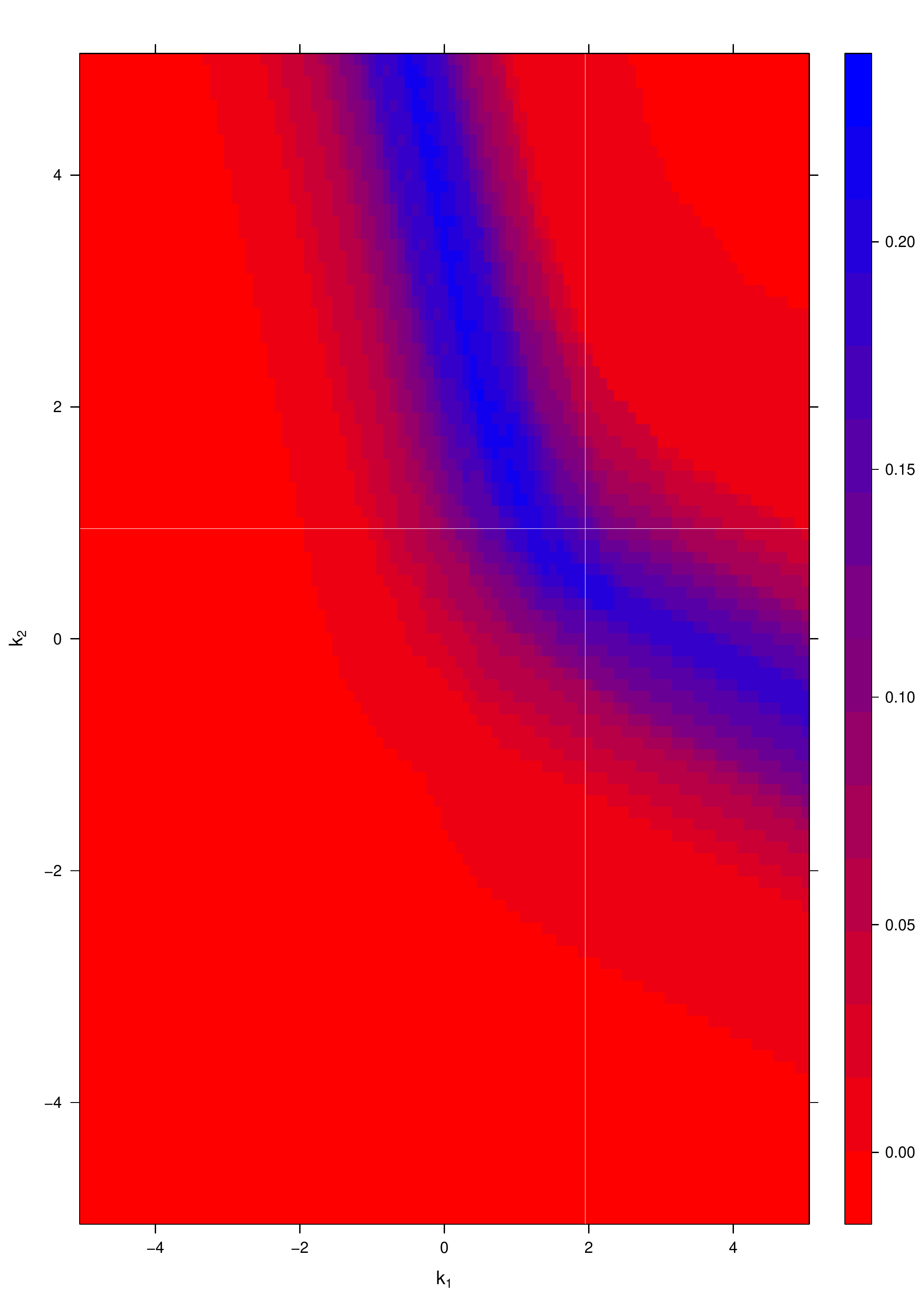} &
      \includegraphics[width=4cm, height=4cm]{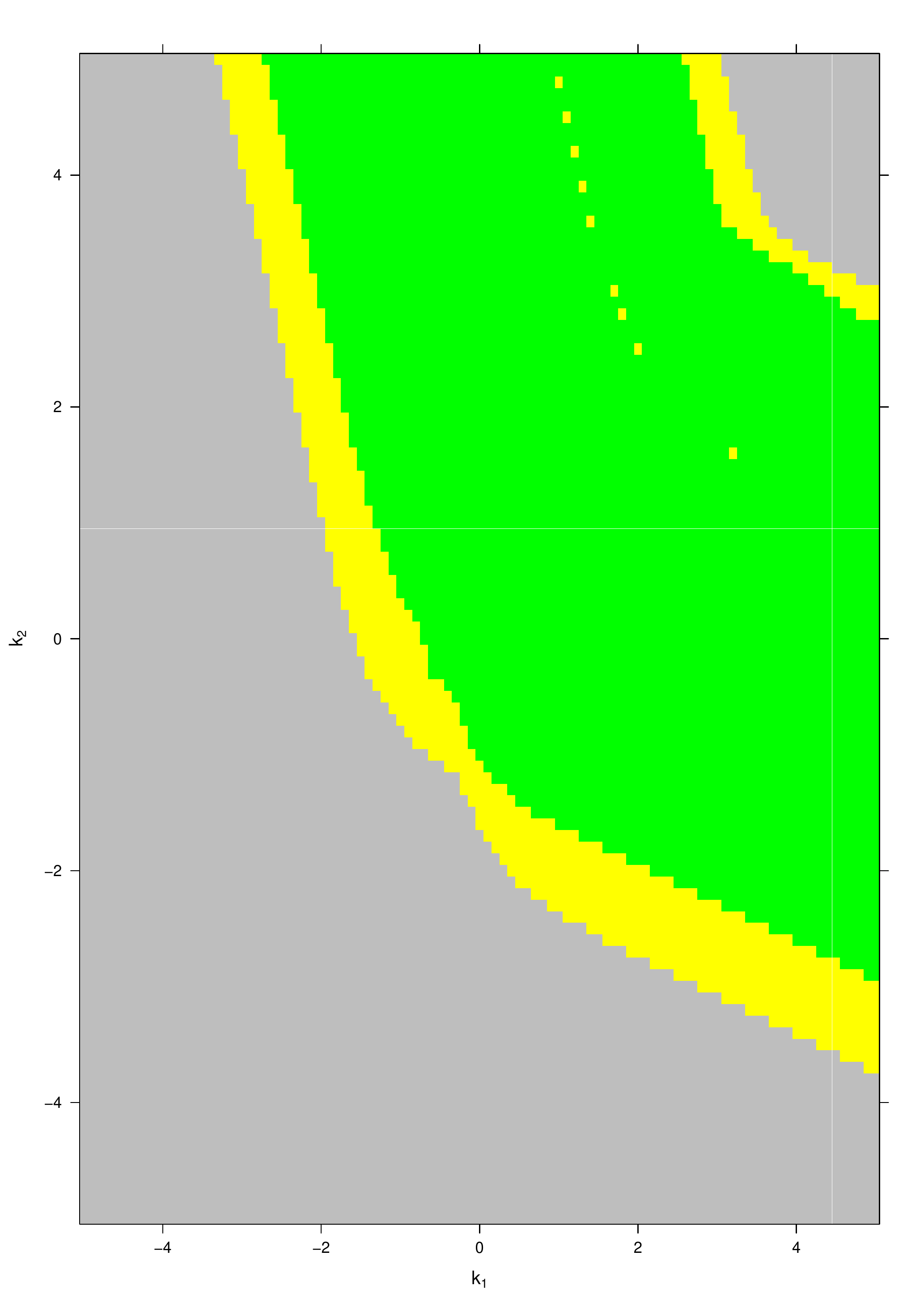} \\
			\multicolumn{2}{c}{\footnotesize $f_{1t}=C_t$, $f_{2t}=B_t$} &\hspace{40mm}&
			\multicolumn{2}{c}{\footnotesize $f_{1t}=C_t$, $f_{2t}=B_t$}
    \end{tabular}
\end{center}
\caption{See description of Figure 2 in \cite{FisslerHlavinovaRudloff_RM}.}
\label{fig:Murphy_additional2}
\end{sidewaysfigure}
%
%

\bibliographystyle{apacite}